\newcommand{\ns}{{\mathbb N}} 
\newcommand{\zs}{{\mathbb Z}} 
\newcommand{\qs}{{\mathbb Q}}  
\newcommand{\cs}{{\mathbb C}} 
\newcommand{\rs}{{\mathbb R}} 
\newcommand{\eps}{\varepsilon}
\newcommand{\vareps}{\varepsilon}
\newcommand{\cR}{\mathcal{R}}
\newcommand{\bC}{C}
\newcommand{\bD}{D}
\newcommand{\bN}{N}
\newcommand{\hP}{\widehat{P}}
\newcommand{\hQ}{\widehat{Q}}
\newcommand{\Q}{T}
\newcommand{\GK}{\mathbb{K}}
\DeclareMathOperator{\Rat}{Rat}
\DeclareMathOperator{\Pol}{Pol}
\DeclareMathOperator{\df}{drf}
\DeclareMathOperator{\dv}{drv}
\DeclareMathOperator{\vv}{v}
\DeclareMathOperator{\ff}{f}
\DeclareMathOperator{\ee}{e}
\DeclareMathOperator{\Tch}{\mathcal T}
\DeclareMathOperator{\Ppol}{P}
\DeclareMathOperator{\rem}{rem}
\newcommand{\LandauO}{\mathcal{O}}
\newcommand{\DT}{\dot{T}_1}
\newcommand{\DK}{\dot{K}_1}
\newcommand{\gQ}{Q}
\newcommand{\cC}{\mathcal C}
\newcommand{\Maple}{{\sc Maple}}
\newtheorem{Theorem}{Theorem}
\newtheorem*{Theorem*}{Theorem~\ref{thm:ED} (repeated)}
\newtheorem{Proposition}[Theorem]{Proposition}
\newtheorem{Corollary}[Theorem]{Corollary}
\newtheorem{Lemma}[Theorem]{Lemma}
\theoremstyle{Definition}
\theoremstyle{remark}
\newtheorem*{Remark}{Remark}
\newcommand{\beq}{\begin{equation}}
\newcommand{\eeq}{\end{equation}}
\newcommand{\gf}{generating function}
\newcommand{\gfs}{generating functions}
\newcommand{\fps}{formal power series}
\def\emm#1,{{\em #1}}
\def\section{\@startsection{section}{1}%
 \z@{.7\linespacing\@plus\linespacing}{.5\linespacing}%
 {\large\bfseries\scshape\centering}}
\def\subsection{\@startsection{subsection}{2}%
 \z@{.5\linespacing\@plus\linespacing}{.5\linespacing}%
 {\normalfont\bfseries\scshape}}
\def\subsubsection{\@startsection{subsubsection}{3}%
 \z@{.5\linespacing\@plus\linespacing}{-.5em}
 {\normalfont\bfseries\itshape}}
\begin{document}
\title
[The 3-state Potts model on planar triangulations]
{The 3-state Potts model on planar triangulations:\\
explicit algebraic solution}

\author[M. Bousquet-M\'elou]{Mireille Bousquet-M\'elou}
\address{M. Bousquet-M\'elou: CNRS, LaBRI, Universit\'e de Bordeaux, 
351 cours de la Lib\'eration, 33405 Talence, France}
\email{mireille.bousquet@labri.fr}

\author[H. Notarantonio]{Hadrien Notarantonio}
  \address{H. Notarantonio: CNRS, IRIF, Université Paris Cité, 75013 Paris, France}
  \email{hadrien.notarantonio@irif.fr}

\thanks{MBM was partially supported by the ANR projects DeRerumNatura (ANR-19-CE40-0018), Combiné (ANR-19-CE48-0011), and CartesEtPlus (ANR-23-CE48-0018). HN was partially supported by the ANR projects ISOMA (ANR-21-CE48-0007) and CartesEtPlus (ANR-23-CE48-0018).}

\keywords{Enumeration -- Coloured planar maps -- Tutte polynomial --
  Discrete differential equations --- Algebraic series}
\subjclass[2000]{05A15, 05C30, 05C31}

\begin{abstract}
  We consider the  $3$-state Potts generating function $T(\nu,w)$ of planar triangulations; that is, the bivariate series
  that counts planar triangulations with vertices coloured in~$3$ colours, weighted by their size (number of vertices, recorded by the variable $w$) and by the number of monochromatic edges (variable $\nu$).

  This series was proved to  be algebraic  15 years ago by Bernardi and the first author: this follows from its link with the solution of a discrete differential equation (DDE), and from general algebraicity results on such equations. However, despite recent progresses on the effective solution of DDEs, the exact value of $T(\nu,w)$ has remained unknown so far --- except in the case  $\nu=0$, corresponding to \emm proper, colourings and  solved by Tutte in the sixties. We determine here this exact value, proving that $T(\nu,w)$ satisfies a polynomial equation of degree~$11$ in $T$ and genus $1$ in~$w$ and $T$. We prove that  the critical value of $\nu$ is $\nu_c=1+3/\sqrt{47}$, with a critical exponent $6/5$ in the series $T(\nu_c, \cdot)$, while the other values of $\nu$ yield the usual map exponent $3/2$.  

  By duality of the planar Potts model, our results also characterize the 3-state Potts generating function of planar \emm cubic, maps, in which all vertices have degree $3$. In particular, the  annihilating polynomial, still of degree $11$, that we obtain for \emm properly, 3-coloured cubic maps proves a conjecture by Bruno Salvy from 2009. 
\end{abstract}

\date{\today}
\maketitle

\section{Introduction and main results}

A  planar map is a connected planar (multi)graph
embedded in the sphere, taken up to orientation preserving
homeomorphism (Figure~\ref{fig:ex-coloured}).  
The enumeration of planar maps is a venerable topic in combinatorics,  born in
the early sixties with the pioneering work of William Tutte~\cite{tutte-triangulations,tutte-census-maps}. Fifteen
years later the topic started a second, independent, life in theoretical
physics, where planar maps provide a discrete model of \emm quantum
gravity,~\cite{BIPZ,BIZ}. The enumeration of maps also has connections with
factorizations of permutations, and hence representations of the
symmetric group~\cite{Jackson:Harer-Zagier,Jackson:character-maps}.
As a result, many techniques have been invented to count families of maps, from the early recursive approaches~\cite{tutte-census-maps} to more and more combinatorial and finally bijective techniques, which rely on a much better understanding of maps~\cite{Sch97,BDG-planaires,bouttier-mobiles,bouttier-guitter-slices,bernardi-fusy-girth}.  Moreover, 40 years after the first
enumerative results of Tutte, planar maps  crossed the border between
combinatorics and probability theory, where they are  studied as
 random metric spaces~\cite{angel-schramm,chassaing-schaeffer,le-gall-topological,marckert-mokkadem}. The limit behaviour of large
planar random maps is now well understood, and gave birth to a
variety of 
 limiting objects, either
 continuous like the Brownian map~\cite{legall,miermont}, or
 discrete  like the UIPQ (uniform infinite planar quadrangulation)~\cite{angel-schramm,chassaing-durhuus,curien-miermont,menard-same}.

The enumeration of maps equipped with some additional structure (a
spanning tree, a proper colouring, a self-avoiding-walk, a configuration of the Ising
model...) has attracted the interest of both combinatorialists and theoretical
physicists since the early days of this
study~\cite{DK88,Ka86,mullin-boisees,lambda12,tutte-dichromatic-sums}. {At
  the moment}, a challenge
is to understand the limiting behaviour of maps equipped with one such
structure~\cite{albenque-Ising,borot-bouttier-duplantier,kassel-wilson,kenyon2015bipolar,richier-perco,sheffield-inventory}.

\begin{figure}[htb]
  \centering
  \includegraphics[width=50mm]{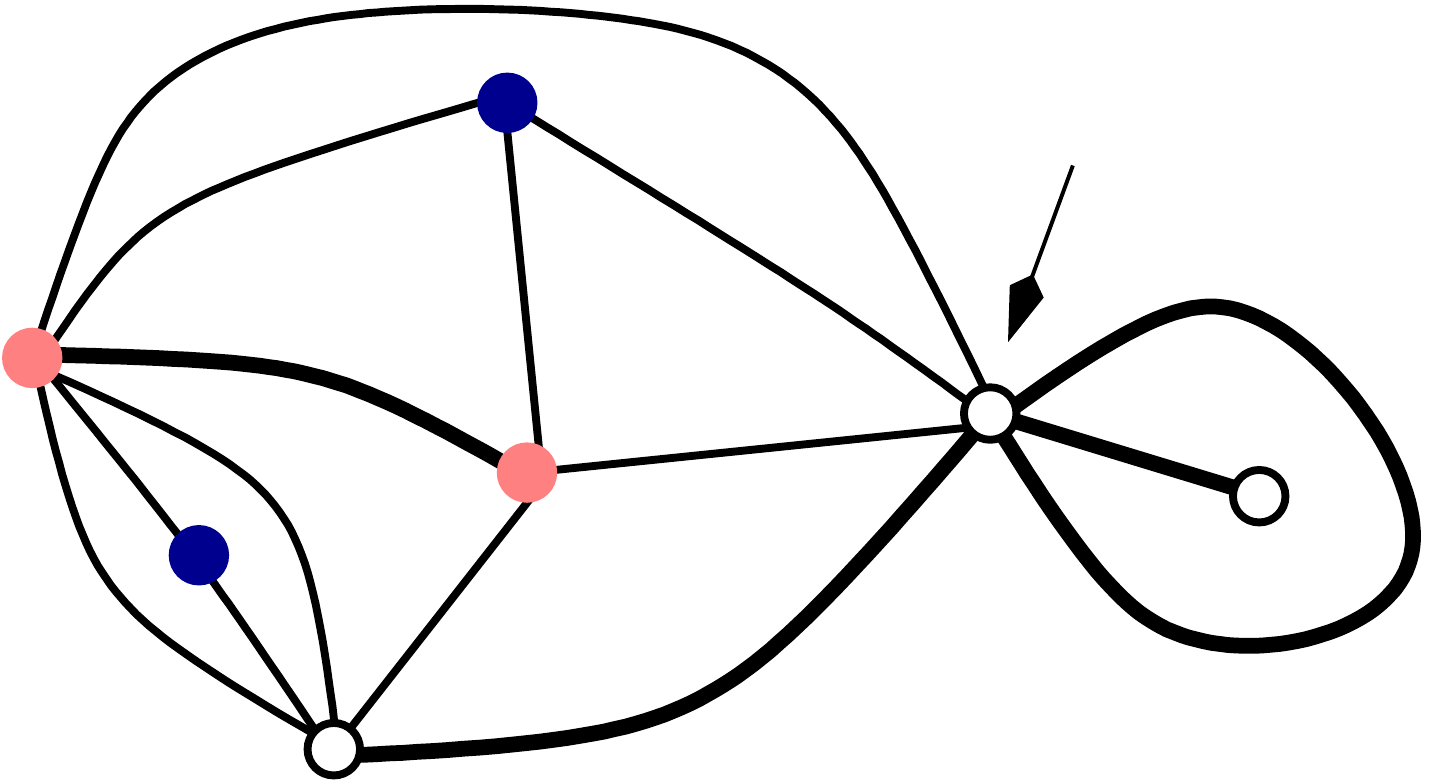}
  \caption{A $3$-coloured rooted near-triangulation having $7$ vertices, $4$ monochromatic edges (in thick lines) and  outer degree $4$. It contributes $w^7 \nu^4y^4$ in the series $T(\nu,w;y)$ counting such maps by vertices ($w$), monochromatic edges~($\nu$) and outer degree ($y$).}
  \label{fig:ex-coloured}
\end{figure}

\medskip
\paragraph{\bf The Potts model.}
An especially interesting structure is a configuration of the \emm $q$-state Potts model,. Given a graph~$G$, the \emm partition function, of this model counts all colourings of the vertices of $G$ with colours taken in $\{1, 2, \ldots, q\}$, with a weight $\nu$ for each \emm monochromatic, edge (that is, an edge having both endpoints of the same colour); see Figure~\ref{fig:ex-coloured}. This partition function is in fact a polynomial in $\nu$ and $q$, henceforth called \emm Potts polynomial, of $G$. Up to a change of variables, it is equivalent to the \emm Tutte polynomial, of $G$. It admits many interesting specializations, like the number of spanning trees or spanning forests of $G$. Of course, when $\nu=0$, one recovers the \emm chromatic polynomial, of $G$, counting all \emm proper, $q$-colourings (with no monochromatic edges). We refer to~\cite{welsh-book} for details.

Studying a family of maps equipped with this model means summing these partition functions over all maps of fixed size in the family. In combinatorial terms, this boils down to counting
$q$-coloured maps by their size and the number of monochromatic edges (and possibly additional statistics). The corresponding \gf\ is the \emm Potts \gf, of the family of maps under study.  This question has already been considered for several families of planar maps, both in physics papers~\cite{daul,eynard-bonnet-potts,zinn-justin-dilute-potts,borot3,guionnet-jones}, and in combinatorics papers~\cite{tutte-dichromatic-sums,BeBM-11,BeBM-17}. For general planar maps first, a natural starting point,  already known to Tutte~\cite{tutte-dichromatic-sums}, is a combinatorially founded functional equation that characterizes the Potts \gf, but requires to introduce two additional statistics on maps, and the corresponding variables  in the \gf; these statistics and variables are sometimes called \emm catalytic,. A similar equation  was established for planar triangulations in~\cite{BeBM-11}, and is recalled in~\eqref{eq:Q} below. Based on these equations, Bernardi and the first author proved in~\cite{BeBM-11,BeBM-17} that the Potts \gfs\ of general planar maps and of planar triangulations both satisfy a polynomial differential equation in the size variable. This equation depends polynomially on $q$ and $\nu$.

This long proof was directly inspired by Tutte's enumeration of triangulations weighted by their chromatic polynomial, which took him $10$ years and $10$ papers; see~\cite{tutte-chromatic-revisited} for a survey. An important, and remarkable, intermediate step establishes that for $q\neq 0,4$ of the form  $q=4\cos^2(k\pi/m)$ (with $k$ and $m$ two integers), one can also write an equation involving a \emm single,  catalytic variable, both for  general maps and for triangulations; this equation is reported in~\eqref{eqinv} for triangulations in the case $q=3$. Such equations are much better understood than those with two catalytic variables~\cite{popescu,mbm-jehanne}, and it was proved in~\cite{BeBM-11} that for these values of $q$,  the Potts \gf\ had to be \emm algebraic,, that is, to satisfy a polynomial equation in $\nu$ and the size variable.

\medskip
\paragraph{\bf Three states.} These special values of $q$ include $q=2$ (the Ising model), and $q=3$, which is the setting of the present paper. For $q=2$, the minimal polynomial of the Potts/Ising \gf\ was derived from the 1-catalytic equation in~\cite{BeBM-11}, both for general maps and for triangulations. But the case $q=3$ resisted, except in the special case $\nu=0$. And it has resisted until this date, despite significant progresses on the effective solution of $1$-catalytic equations~\cite{BCNS_ISSAC22,BNS_ISSAC23,Notar-DDE,notarantonio-yurkevich}.  All effective techniques lead to a polynomial system from which one must extract a single polynomial equation satisfied by the main \gf, but all systems that were obtained so far for this problem turned out to be too big to be solved.

In this paper, we use yet another system, for triangulations, which we manage to solve. The maps that we consider are \emm rooted, by choosing a corner at a vertex (Figure~\ref{fig:ex-coloured}). This is a classical choice, which prevents symmetries. The face containing this corner is the  \emm root face,, or \emm outer face,.
Beyond triangulations, we consider \emm near-triangulations,, in which the root face  has any degree, while all other faces are triangles (Figure~\ref{fig:ex-coloured}; precise definitions are given in Section~\ref{sec:defs}). In a companion paper~\cite{BMN-26}, we address the case of general planar maps, where an analogous system exists. Its size, however, is bigger, and the solution technique is different.

The main theorem of the present paper reads as follows.

\begin{Theorem}\label{thm:main}
  For each $i\ge 1$, the \gf\ $T_i\equiv T_i(\nu,w)$ that counts $3$-coloured near-triangulations of outer degree $i$ by vertices (variable $w$) and monochromatic edges (variable $\nu$) is algebraic of degree $11$. All  series $T_i$ belong to the same extension of degree $11$ of $\qs(\nu,w)$.
\end{Theorem}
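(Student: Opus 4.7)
The plan is to exploit the one-catalytic functional equation~\eqref{eqinv} that captures the $3$-state Potts enumeration of near-triangulations, and then combine the general theory of discrete differential equations with a guess-and-prove strategy to extract the explicit annihilating polynomial of $T_1$.

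After packaging the full family as $T(\nu,w;y) = \sum_{i \ge 1} T_i(\nu,w)\, y^i$, equation~\eqref{eqinv} becomes a polynomial identity of the form $P\bigl(T(\nu,w;y), T_1, \ldots, T_k, \nu, w, y\bigr) = 0$, involving only finitely many univariate unknowns $T_1, \ldots, T_k$ in $w$ besides $T(\nu,w;y)$ itself. By the theorem of Bousquet-M\'elou and Jehanne~\cite{mbm-jehanne} on DDEs, each $T_i$ is automatically algebraic over $\qs(\nu,w)$; the task is to pin down its minimal polynomial, which the abstract theorem does not furnish.

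The standard machinery I would invoke is the following: differentiate $P$ with respect to $y$ and locate branches $Y \equiv Y(\nu,w)$ satisfying $(\partial_y P)\bigl|_{y=Y} = 0$. Each critical branch contributes two polynomial relations among $T_1, \ldots, T_k$ and the evaluation $T(\nu,w;Y)$, so gathering enough branches produces a zero-dimensional polynomial system, whose elimination yields an annihilating polynomial for $T_1$. By analogy with the $q = 2$ case treated in~\cite{BeBM-11}, I expect this polynomial to have degree $11$.

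The hard part, and the reason the problem has resisted for fifteen years, is the size explosion of this elimination: every earlier system (whether derived from~\eqref{eq:Q} or from an alternative one-catalytic equation) has so far defeated direct resultant or Gr\"obner-basis methods. I would therefore combine (i)~a careful choice of one-catalytic equation and auxiliary unknowns minimizing $k$; (ii)~high-precision computation of $T_1(\nu,w)$ by iterating the functional equation modulo a high power of~$w$; and (iii)~Hermite-Pad\'e approximation to guess a candidate polynomial $\Phi(T_1,\nu,w)$ of degree $11$ and genus~$1$, which I would then certify by checking that it is compatible with (lies in the ideal generated by) the critical-branch system. Finally, to see that every $T_i$ lies in the \emph{same} degree-$11$ extension, I would use the system in reverse: it expresses each of $T_2, \ldots, T_k$ as a rational function of $T_1$ with coefficients in $\qs(\nu,w)$, so they all belong to $\GK := \qs(\nu,w)[T_1]$, and then the identity $P(T(\nu,w;y), T_1, \ldots, T_k, \nu, w, y) = 0$ becomes a polynomial equation over $\GK[y]$ that determines $T(\nu,w;y)$ coefficient by coefficient, forcing every $T_i \in \GK$ for $i \ge 1$, with degree exactly $11$ by minimality of $\Phi$.
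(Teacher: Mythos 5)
Your proposal starts from the one-catalytic equation~\eqref{eqinv} and the critical branches $Y$ of~\cite{mbm-jehanne}. But this is precisely the route the paper reports as having failed for fifteen years: Section~\ref{sec:eqfunc} explains both that the resulting system is too large for direct elimination and that the non-degeneracy condition required by the more recent effective methods does not hold here. The paper instead works with the \emph{structured} polynomial system of Proposition~\ref{prop:factor} coming from~\cite{BeBM-17}, namely the two double-root factorizations $\bC(x)\mp\bD(x)^3 = \hP_\mp(x)\prod(x-X_i)^2$, together with the linear relation~\eqref{factor2}. One then eliminates the $C_r$, then works with the symmetric functions $S_1, P_1, S_3, P_3$ of the $X_i$, and exploits the fact that the curve $(w,S_1)$ has genus $1$ to parametrize everything rationally in $\DT=\partial_wT_1$ and a square root. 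This is genuinely different in kind from Hermite--Pad\'e guessing followed by certification against the BMJ ideal; indeed the certification step you propose is circular, since if the critical-branch ideal were small enough to test membership in, one could simply eliminate in it to extract the minimal polynomial outright.

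There is also a concrete gap at the end of your argument. Suppose you have shown, as the paper does via back-substitution and an inductive use of~\eqref{eqinv}, that every $T_i$ lies in $\GK:=\qs(\nu,w)(T_1)$, and that $[\GK:\qs(\nu,w)]=11$. Since $11$ is prime, each $T_i$ has degree either $1$ or $11$ over $\qs(\nu,w)$; minimality of the annihilating polynomial of $T_1$ says nothing about $T_i$ for $i\ne1$. Your claim that the $T_i$ have ``degree exactly 11 by minimality of $\Phi$'' is therefore unjustified. The paper closes this gap with a separate asymptotic argument (Lemma~\ref{lem:Ti}): the coefficient bounds $\kappa_1\rho_\nu^{-n}n^{-1-e}\le[w^n]T_i\le\kappa_2\rho_\nu^{-n}n^{-1-e}$ with $e\in\{3/2,6/5\}$ are incompatible with $T_i$ being a rational function of $w$, so degree $1$ is impossible and degree $11$ follows. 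You need some such non-rationality argument; without it the statement ``algebraic of degree $11$'' is not proved, only ``algebraic of degree dividing $11$''.

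Finally, a smaller inaccuracy: the system does not directly ``express each of $T_2,\dots,T_k$ as a rational function of $T_1$''. In the paper, $T_3,T_5,T_7$ are recovered one by one from $C_2,C_1,C_0$ (each being linear in a new $T_{2j+1}$), $T_2,T_4,T_6$ come from the explicit polynomial identities~\eqref{T24expr}--\eqref{T6expr}, and $T_i$ for $i\ge8$ are obtained by an induction driven by the shape of~\eqref{eqinv} after writing $T(y)=w+T_1y+\cdots+T_7y^7+y^8S(y)$. All of this lands the $T_i$ in $\qs(\nu,w,\DT)$, but ``rational in $T_1$'' is a conclusion of that analysis, not a property one can read off from the system a priori.
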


The smallest  minimal polynomial that we obtain is for the series $\DT:= \partial_w T_1$. This polynomial, given below, has degree $2$ in $w$, $7$ in $\nu$, and of course $11$ in $\DT$.  The genus of the underlying  curve in $w$ and $\DT$ is $1$, so the degree $2$ in $w$ is optimal (the genus would be $0$ if there was an equation of degree $1$ in $w$). This minimal polynomial contains ``only'' $117$ monomials in $\nu,w$ and $\DT$:
\begin{small}
  \allowdisplaybreaks
\begin{multline} \label{alg:dT1}
  276480 \DT^{11} \nu^{7}-27648 \nu^{6} \left(31 \nu +24\right) \DT^{10}+1152 \nu^{5} \left(1021 \nu^{2}+1678 \nu +541\right) \DT^{9}\\
-18 \nu^{4} \left(46080 \nu^{3} w +51935 \nu^{3}+138243 \nu^{2}+92253 \nu +17089\right)  \DT^{8}\\
+72 \nu^3 \left( 1920 \nu^{3} \left(17 \nu +7\right) w +6545 \nu^{4}+25755 \nu^{3}+26863 \nu^{2}+10253 \nu +1144\right) \DT^{7}
\\
-4\nu^2\left(1008 \nu^{3} \left(727 \nu^{2}+586 \nu +127\right) w +38596 \nu^{5}+219355 \nu^{4}+322318 \nu^{3}+190022 \nu^{2}+43274 \nu +2915 \right)
 \DT^{6}
\\
+4\nu \left(216 \nu^{3} \left(2433 \nu^{3}+2879 \nu^{2}+1255 \nu +153\right) w +8027 \nu^{6}+67626 \nu^{5}+134820 \nu^{4}+109109 \nu^{3}  +38007 \nu^{2} \right.\\
\left.+5103 \nu +188 \right)
\DT^{5}
+\left(41472 \nu^{6} \left(\nu -1\right) w^{2}-12 \nu^{3} \left(78871 \nu^{4}+122456 \nu^{3}+80010 \nu^{2}+19688 \nu +1375\right) w \right.
\\
\left. -3876 \nu^{7}-53138 \nu^{6}-145202 \nu^{5}-151460 \nu^{4}-71656 \nu^{3}-14332 \nu^{2}-958 \nu -18\right) \DT^{4}
+\left(13824 \nu^{5} \left(5 \nu +1\right) \left(1-\nu \right) w^{2}
\right.
\\
\left. +8 \nu^{2} \left(5 \nu +1\right) \left(6823 \nu^{4}+11843 \nu^{3}+9045 \nu^{2}+2429 \nu +100\right) w +208 \nu^{7}+6088 \nu^{6}+24600 \nu^{5}+31836 \nu^{4}+19256 \nu^{3} \right.\\
\left. +5040 \nu^{2}+440 \nu +12\right) \DT^{3}
+\left(1728 \nu^{4} \left(\nu -1\right) \left(5 \nu +1\right)^{2} w^{2}-12 \nu  \left(3 \nu +1\right) \left(1358 \nu^{5}+2771 \nu^{4}+2504 \nu^{3}+868 \nu^{2}
  \right. \right.
\\
\left. \left.+58 \nu +1\right) w -312 \nu^{6}-2401 \nu^{5}-3747 \nu^{4}-2821 \nu^{3} 
 -899 \nu^{2}-78 \nu -2\right) \DT^{2}
+\nu \left(-96 \nu^{2} \left(\nu -1\right) \left(5 \nu +1\right)^{3} w^{2}
\right. \\
\left. +4  \left(\nu +1\right) \left(1229 \nu^{5}+2390 \nu^{4}+2114 \nu^{3}+697 \nu^{2}+49 \nu +1\right) w + \left(104 \nu^{4}+189 \nu^{3}+177 \nu^{2}+67 \nu +3\right)\right) \DT\\
+2 \nu^{2} \left(\nu -1\right) \left(5 \nu +1\right)^{4} w^{2}-2 \nu^{2} \left(\nu +2\right) \left(104 \nu^{4}+189 \nu^{3}+177 \nu^{2}+67 \nu +3\right) w =0.
\end{multline}
\end{small}
For comparison, the minimal polynomial of $T_1$ itself contains $1304$ monomials.

\medskip
\paragraph{\bf Special values of $\boldsymbol{\nu}$.} When $\nu=0$, we have $T_1=0$ (since there is a loop at the root vertex), but for $i>1$ we recover known results on the enumeration of properly $3$-coloured near-triangulations of outer degree $i$, with a minimal polynomial of degree only $2$ for $T_i$. In particular, $T_2=T_1/\nu$ for general $\nu$, hence  the above equation implies that, at $\nu=0$,
\[
2 \dot{T}_2^{2}-\left(4 w +3\right) \dot{T}_2 +2 w \left(w +6\right)=0
  .
\]
The first result of this type was obtained  by Tutte already in 1963, formulated in terms of the number of \emm bicubic, (bipartite and cubic) maps~\cite[p.~269]{tutte-census-maps}. Indeed, the duals of bicubic maps are the \emm Eulerian, triangulations (those in which every vertex has even degree). These are the only triangulations that admit a proper $3$-colouring, and each of them admits exactly $6$ colourings.

The case $\nu=1$ is also simple and well-known, as $T_i$ then counts near-triangulations by vertices (with a weight $3w$  per vertex); it has degree $3$ over $\qs(w)$~\cite{mullin-nemeth-schellenberg}. In particular, the minimal polynomial of $\DT$ factors for $\nu=1$, and the factor that vanishes gives
\[
  2 \DT^{3}-3 \DT^{2}+\DT -6 w=0.
\]
The genus is $0$ in these two cases.

\medskip
\paragraph{\bf Duality.} A duality property of the (planar) Potts polynomial, recalled in Section~\ref{sec:q-Potts}, allows us to translate our results in terms of the $3$-Potts \gf\
of \emm near-cubic, maps (those in which all vertices have degree $3$, except possibly the root vertex); see~\eqref{eq:KT} and Corollary~\ref{cor:cubic}. In particular, we prove for \emm properly, $3$-coloured near-cubic maps the following result, equivalent to a conjecture of Bruno Salvy that dates back to 2009 (see~\cite[Conj.~27]{BeBM-11}).

\begin{Corollary}\label{cor:proper_cubic}
  The \gf\ $K_1=4 w^{3}+84 w^{4}+1872 w^{5}+\LandauO( w^{6})$ of properly $3$-coloured near-cubic maps with root degree~$1$, counted by faces, is algebraic of degree $11$. Its derivative satisfies
  \begin{multline*}
 324 w^{2}=    655360 \DK^{11}+1245184 \DK^{10}+866304 \DK^{9}-80 \left(8192 w -1995\right) \DK^{8}-2880 \left(512 w +49\right) \DK^{7}\\
    -504 \left(2944 w +219\right) \DK^{6}-24 \left(36640 w +1383\right) \DK^{5}-\left(16384 w^{2}+334416 w +3033\right) \DK^{4}\\
    -6 \left(4096 w^{2}+13584 w -153\right) \DK^{3}-9 \left(1536 w^{2}+1300 w -33\right) \DK^{2}-27 \left(4 w +1\right) \left(32 w -1\right) \DK.
  \end{multline*}
\end{Corollary}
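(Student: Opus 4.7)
The plan is to deduce Corollary~\ref{cor:proper_cubic} from Theorem~\ref{thm:main} by applying the planar Potts duality recalled in Section~\ref{sec:q-Potts}, which takes the explicit form~\eqref{eq:KT} relating the 3-state Potts generating function $K$ of near-cubic maps to the corresponding series $T$ of near-triangulations. Since triangulations and cubic maps are planar duals, and since the $q$-state Potts model satisfies a Kramers--Wannier-type duality of the form $(\nu-1)(\nu^{*}-1)=q$, the proper-colouring specialisation on the cubic side ($\nu=0$ in the definition of $K_1$) corresponds on the triangulation side to a specific algebraic value of the edge weight $\nu$. Moreover, Euler's relation together with $3V=2E$ for cubic maps forces an explicit change of the size variable, since $K$ counts by faces whereas $T$ counts by vertices.

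First I would specialise~\eqref{eq:KT} at outer degree~$1$ and at the proper-colouring value, thereby writing $K_1$ (and hence $\dot K_1$) as an explicit rational function of $\dot T_1$ (possibly together with one or two neighbouring series $T_i$, since Potts duality interchanges the outer face with the root vertex) and of $w$, evaluated at the dualised parameters. Theorem~\ref{thm:main} then guarantees that the resulting series lies in a degree-$11$ extension of $\qs(w)$, which already matches the degree of the polynomial announced in the corollary.

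Next I would substitute this expression into~\eqref{alg:dT1}, specialised at the dual value of $\nu$, and eliminate $\dot T_1$ by a resultant (or Gröbner basis) computation in \texttt{Maple}. After clearing denominators and dividing out a possible gcd, this yields a polynomial equation in $\dot K_1$ and $w$. Any spurious factors produced by the resultant would be ruled out by matching with the initial expansion $K_1=4w^{3}+84w^{4}+1872w^{5}+\LandauO(w^{6})$; a unique minimal cofactor of degree~$11$ in $\dot K_1$ should survive, reproducing the displayed polynomial with its leading term $324w^{2}$.

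The main obstacle is twofold. First, the combinatorial bookkeeping in applying~\eqref{eq:KT} at root degree~$1$: the outer-degree/root-degree dictionary induced by duality, combined with the correct colour normalisation and the $w$-substitution, must be tracked carefully to obtain a clean rational link between $\dot K_1$ and $\dot T_1$. Second, one must verify that specialising~\eqref{alg:dT1} at the dual value of $\nu$ does not cause its leading coefficient in $\dot T_1$ to vanish, which would require a limiting argument and might introduce additional factors in the minimal polynomial of $\dot K_1$; the fact that the target equation also has degree exactly $11$ is reassuring and should serve as a consistency check for the elimination.
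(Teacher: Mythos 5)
Your proposal follows the same route as the paper: use the duality relation~\eqref{eq:KT} at $q=3$, $i=1$, $\nu=0$ (which forces $\nu_*=-2$ and $w\mapsto -w/3$), then specialise the minimal polynomial~\eqref{alg:dT1} of $\DT$ at these dual parameters. However, you over-engineer the final step: since~\eqref{eq:KT} relates $K_i$ directly to $T_i$ with the \emph{same} index $i$ (the outer face of a near-triangulation of outer degree $i$ dualises to the root vertex, of degree $i$, of a near-cubic map), no ``neighbouring series'' appear, and one gets the clean identity $K_1(3,0,w)=3\,T_1(3,-2,-w/3)$, hence $\DK=-\DT(3,-2,-w/3)$. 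Consequently no resultant or elimination is needed at all; one simply substitutes $\nu=-2$, $w\mapsto -w/3$, $\DT\mapsto -\DK$ into~\eqref{alg:dT1} and clears a constant, and the non-degeneracy worry is settled by observing that the leading coefficient $276480\,\nu^{7}$ is plainly non-zero at $\nu=-2$.
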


\paragraph{\bf Singularities and asymptotics.} We also study the singularities of the series $T_i$, as functions of $w$ depending on the parameter $\nu\ge 0$. We find for every $\nu$ the usual asymptotic behaviour of uncoloured planar maps, namely $[w^n] T_i \sim \rho_\nu^{-n} n^{-5/2}$, except at the critical point $\nu_c=1+3/\sqrt{47}$, where $[w^n] T_i \sim  \rho_\nu^{-n} n^{-11/5}$ (up to a multiplicative constant in both cases). This study requires some care, and occupies a significant part of the paper.

\bigskip
\noindent
{\bf Outline of the paper.} We begin in Section~\ref{sec:prelim} with definitions on maps and the Potts model in~$q$ colours. We introduce the \emm Potts \gf,\ $T(y)\equiv T(q,\nu,w;y)$ of near-triangulations, which counts these maps, equipped with a $q$-colouring of their vertices,  by the size (variable~$w$), the number of monochromatic edges (variable $\nu$) and the outer degree ($y$). In Section~\ref{sec:eqfunc} we give three characterizations of~$T(y)$, which were established successively in earlier papers~\cite{BeBM-11,BeBM-17}. The first one  is valid for any $q$, but it involves an additional variable $x$ and a series that is more general than $T(y)$. The other two characterizations  only hold for $q=3$. One involves  $T(y)$, while the other does not involve the variable $y$ any more. It is a polynomial system relating several series in $w$ and $\nu$,  among which $T_1, T_3, T_5$ and $T_7$. In Section~\ref{sec:Ty}, we derive from this system the minimal polynomials of $T_1, T_3, T_5$ and $T_7$, and prove Theorem~\ref{thm:main}. Section~\ref{sec:asympt} is devoted to the singular analysis of the series $T_i$, culminating in Proposition~\ref{prop:asympt}.

Many parts of this paper  require using a computer algebra system.  The paper is thus accompanied by a \Maple\ session available on the web pages of the authors. We also use {\tt msolve}, a C library for solving in arbitrary precision multivariate polynomial systems~\cite{msolve}.

\section{Preliminaries}
\label{sec:prelim}

\subsection{Planar maps}\label{sec:defs}
%
A \emph{planar map} is a proper
embedding of a connected planar graph in the
oriented sphere, considered up to orientation preserving
homeomorphism. Loops and multiple edges are allowed
(Figure~\ref{fig:example-map}, left). The \emph{faces} of a map are the
connected components of  its complement. The numbers of
vertices, edges and faces of a planar map $M$, denoted by $\vv(M)$,
$\ee(M)$ and $\ff(M)$,  are related by Euler's relation
$\vv(M)+\ff(M)=\ee(M)+2$.
The \emph{degree} of a vertex or face is the number
of edges incident to it, counted with multiplicity. A \emph{corner} is
a sector delimited by two consecutive edges around a vertex; 
hence  a vertex or face of degree $k$ is incident to  $k$ corners. 

For counting purposes it is convenient to consider \emm rooted, maps. 
A map is rooted by choosing a corner. The incident vertex and face are called root vertex and root face.
The edge that follows the root corner in counterclockwise 
order around the root vertex is called the root edge.
In figures, we  usually choose the root face
as the infinite face (Figure~\ref{fig:example-map}). 
This explains why we often call the root face the \emm outer face,,
and its degree the \emm outer degree, (denoted $\df(M)$).

The \emph{dual} of a
map $M$, denoted $M^*$, is the map obtained by placing a 
vertex of $M^*$ in each face of $M$ and an edge of $M^*$ across each
edge of $M$; see Figure~\ref{fig:example-map}, right.  The dual of a rooted map is rooted canonically at the dual corner: that is, the root face (resp. root vertex) of $M^*$ is dual to the root vertex (resp. root face) of $M$. A \emm triangulation, is a map in which all faces have degree~$3$. A \emph{near-triangulation} is a map in which all non-root faces have degree~$3$. We denote by $\mathcal T_i$ the set of near-triangulations of outer degree $i$. Duality transforms
triangulations (resp. near-triangulations)   into \emm cubic, (resp. \emm near-cubic,) maps, that is, maps in which every vertex (resp. every non-root vertex) has degree~3.  We denote by $\mathcal K_i$ the set of near-cubic maps of root degree $i$.  Observe that in a near-triangulation~$M$ with outer degree $i$, a double counting of edges gives
\[
  2\ee(M)= i+ 3(\ff(M)-1).
\]
Combined with Euler's relation, this yields
\beq\label{fe-v}
  \ff(M)= 2\vv(M) -i-1, \qquad \ee(M)=3 \vv(M)-i-3 .
\eeq

\begin{figure}[h]
  \centering
  \includegraphics{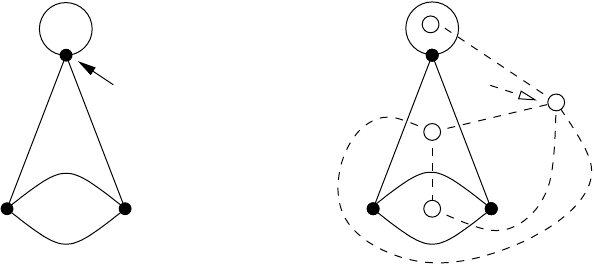}
  \caption{\emph{Left:} a rooted planar map with $3$ vertices and $4$ faces, having outer degree $4$.
    \emph{Right:} the dual map, in dashed edges.}
  \label{fig:example-map}
\end{figure} 

From now on, every map
is \emph{planar} and \emph{rooted}, and these adjectives will
often be omitted. 
We include among rooted planar maps the \emph{atomic map}
having one vertex and no edge.

\subsection{The $\boldsymbol q$-state Potts model}
\label{sec:q-Potts}
For $q\in \ns:=\{1, 2, \ldots\}$, the \emm Potts polynomial, of a map $M$ is defined to be
\[
  \Ppol_M(q, \nu):= \sum_{c  : V(M)\rightarrow \{1, \ldots, q\}}
  \nu^{m(c)},
\]
where $c$ is a colouring of the vertices of $M$ in $q$ colours taken in $\{1, 2, \ldots, q\}$, and $m(c)$ is the number of \emm monochromatic, edges (whose endpoints share the same colour). 
For instance, the map $M$ shown on the left of Figure~\ref{fig:example-map} has Potts polynomial:
\beq\label{Pott-exemple}
  \Ppol_M(q, \nu)= q\nu \left( (q-1)(q-2)+ (q-1)\nu^2+ 2(q-1)\nu + \nu^4 \right).
\eeq
It is easy to prove that $\Ppol_M(q, \nu)$ is not only a polynomial in $\nu$, but also a polynomial in $q$~\cite{welsh-book}.

We define the  \emm Potts \gf\ of near-triangulations, by:
\beq\label{T-def}
T(y)\equiv T(q,\nu,w;y)
:=\frac 1 q \sum_{M} \Ppol_M(q,\nu) w^{\vv(M)} y^{\df(M)},
\eeq
where the sum runs over all planar near-triangulations $M$ (including the atomic map).
Since there are finitely many near-triangulations with a given number of vertices, 
and $\Ppol_M(q,\nu)$ is a multiple of $q$,
the series $T(y)$ is a power series in $w$ with
coefficients in $\qs[q,\nu,y]$, the ring of polynomials in $q, \nu$ and $y$ with rational coefficients. The expansion of $T(y)$ at order~$2$ reads
\[
 T(y)=w +y \left(q-1+\nu\right) \left(\nu +y \right) w^{2}
+\LandauO(w^3).
\]
The \gf\ of near-triangulations of outer degree $1$ is
\begin{multline*}
T_1:=[y^1] T(y)= \nu(q-1+\nu)w^2+\nu\left((q-1)(q-2+2\nu)
+\nu^2(q-1+\nu^2)\right.\\
\left. +2\nu(q-1+\nu)(q-1+\nu^2)
+\nu^2(q-1+\nu)^2
\right)w^3+O(w^4),
\end{multline*}
as illustrated in Figure~\ref{fig:small-triangulations}.
More generally, we denote by $T_i$ the coefficient of $y^i$ in $T(y)$, that is, the Potts \gf\ of near-triangulations of outer degree $i$. In combinatorial terms,~$T(y)$ counts $q$-coloured near-triangulations by vertices ($w$), 
monochromatic edges ($\nu$) and outer degree~($y$), with the convention that the root vertex is
coloured in a prescribed colour (this accounts for the division by $q$).

\begin{figure}[h]
  \centering
  \includegraphics{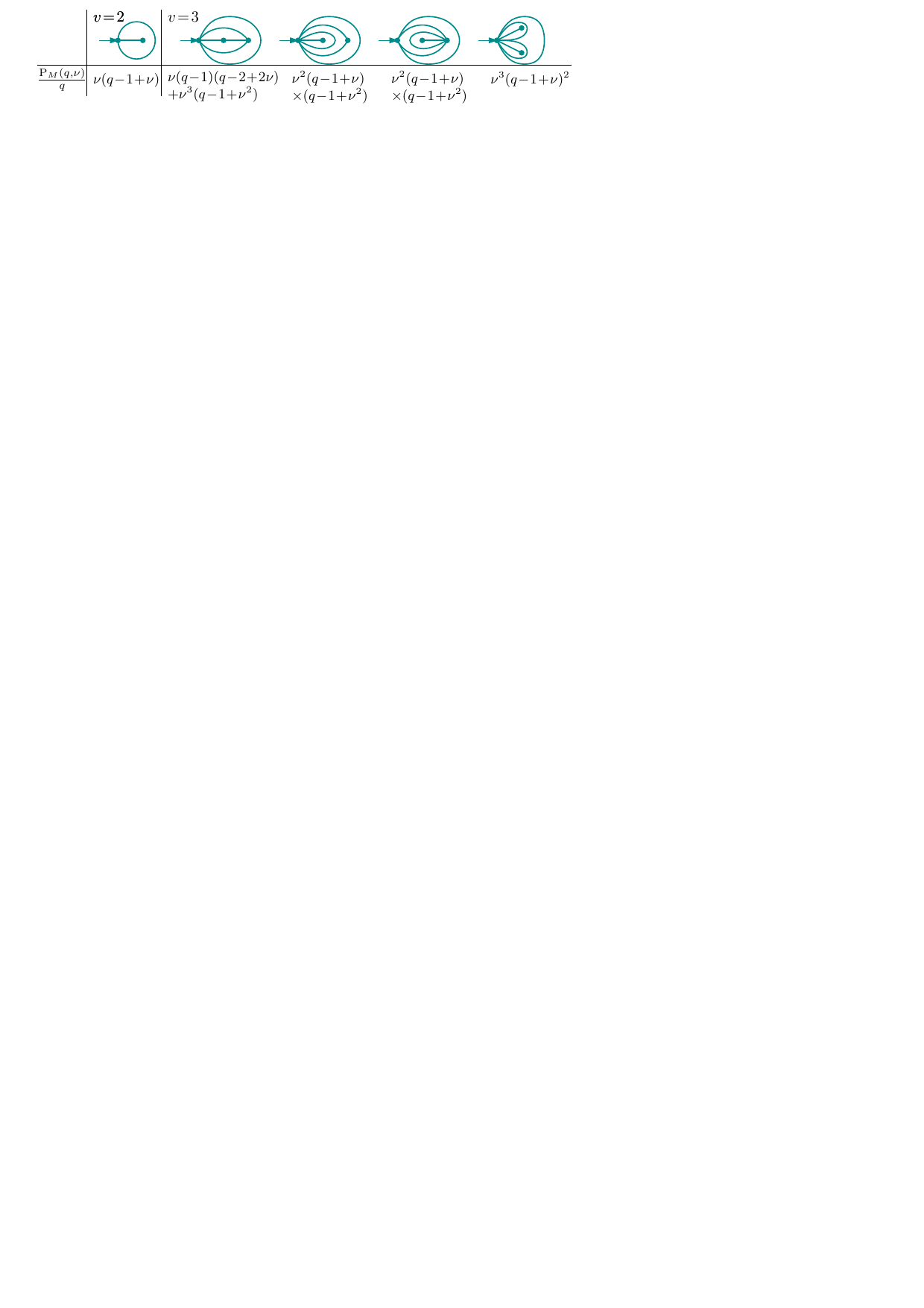}
\caption{The rooted near-triangulations of outer degree 1 with $v=2$
  and $v=3$ vertices, and their Potts polynomials (divided by $q$).
} 
\label{fig:small-triangulations}
\end{figure}

\medskip
\paragraph{\bf Duality.} It follows from the connection between the Potts polynomial and the Tutte polynomial~\cite[Sec.~4.4]{welsh-book} and from the duality property of the Tutte polynomial~\cite[Sec.~3.3]{welsh-book}, that, for a planar map $M$ and its dual $M^*$,
\[
 (\nu_*-1)^{\ff(M)-1} {\Ppol_{M}(q, \nu)}=(\nu-1)^{\ff(M^*)-1} {\Ppol_{M^*}(q, \nu_*)}
\]
where $q=(\nu-1)(\nu_*-1)$. This can be checked for instance on the maps of Figure~\ref{fig:example-map}, for which $\Ppol_M(q,\nu)$ is given by~\eqref{Pott-exemple}, while
\[
  \Ppol_{M^*}(q, \nu_*)=q(q-1+\nu_*) \left((q-1)(q-2)+ (q-1)\nu_*^2+2(q-1)\nu_*+\nu_*^4\right).
\]
If we then define  the  \emm Potts \gf,\ of near-cubic maps by:
\[ 
K(y)\equiv K(q,\nu,w;y)
:=\frac 1 q \sum_{M} \Ppol_M(q,\nu) w^{\ff(M)} y^{\dv(M)},
\] 
where the sum runs over all planar near-cubic maps and $\dv(\cdot)$ denotes the degree of the root vertex, the above duality relation, combined with~\eqref{fe-v}, yields
\[
  K(q,\nu,w;y)=\frac q {(\nu-1)^3} \ T\!\left(q, \nu_*, \frac 1 q (\nu-1)^3 w; \frac y {\nu-1}\right),
\]
where as above $q=(\nu-1)(\nu_*-1)$.
That is, if $K_i(q,\nu, w)$ is the Potts \gf\ of near-cubic maps of root degree $i$, then
\beq\label{eq:KT}
    K_i(q,\nu,w)  
  = \frac q{(\nu-1)^{i+3}} \, T_i\!\left(q,\nu_*, \frac 1 q (\nu-1)^3 w\right).
\eeq
In particular, for $q=3$, $i=1$ and $\nu=0$, we obtain that the series $K_1$ of Corollary~\ref{cor:proper_cubic} is
\[
  K_1(3,0,w)=  3\, T_1\!\left(3,-2, -\frac 1 3 w\right).
\]
 Corollary~\ref{cor:proper_cubic} then follows from~\eqref{alg:dT1}.

\subsection{Power series}
Let $A$ be a commutative ring and $x$ an indeterminate. We denote by
$A[x]$ (resp. $A[[x]]$) the ring of polynomials (resp. \fps) in $x$
with coefficients in $A$. If $A$ is a field, then~$A(x)$ denotes the field
of rational functions in $x$.
These notations are generalized to polynomials, fractions
and series in several indeterminates.
The coefficient of $x^n$ in a power  series $F(x)$ is denoted
by $[x^n]F(x)$.

Recall that a power series $F(x_1, \ldots, x_k) \in \GK[[x_1, \ldots, x_k]]$, where $\GK$ is a
field, is \emm algebraic , (over $\GK(x_1, \ldots, x_k)$) if it satisfies a
non-trivial polynomial equation $P(x_1, \ldots, x_k, F(x_1, \ldots,
x_k))=0$.

 \section{Functional equations}
 \label{sec:eqfunc}

\subsection{Two catalytic variables}
The first way to compute the coefficients of the series $T(y)\equiv T(q,\nu,w;y)$ defined in~\eqref{T-def} (seen as a series in $w$) relies on a functional equation satisfied by a  series $\gQ(x,y)\equiv\gQ(q,\nu, w,t;x,y)$ that counts certain Potts-weighted maps, called \emm quasi-triangulations,,  that are more general than near-triangulations and will not be defined here. This series involves an additional variable $t$ counting edges, and an additional ``catalytic'' variable $x$. The equation, established in~\cite[Prop.~2]{BeBM-11},  reads:
\begin{multline}
  \label{eq:Q}
Q(x,y) = 1 
+ t\, \frac{Q(x,y)-1-yQ_1(x)}{y}+  xt (Q(x,y)-1) + xyt Q_1(x) Q(x,y)
\\
+yt(\nu-1)\gQ(x,y)(2x\gQ_1(x)+\gQ_2(x))
+y^2wt\left(q+ \frac{\nu-1}{1-xt\nu}\right) \gQ(0,y)\gQ(x,y)
\\+\frac{ywt(\nu-1)}{1-xt\nu} \frac{\gQ(x,y)-\gQ(0,y)}{x}
  \end{multline}
where $\gQ_1(x)=[y]\gQ(x,y)$ and $\displaystyle
\gQ_2(x)=[y^2]\gQ(x,y)={(1-2xt\nu)}\gQ_1(x)/({t\nu})$.
This equation defines    a unique power series $Q$ in $t$,
which has polynomial coefficients in
$q, \nu, w, x$ and~$y$. It is said to be \emm catalytic, in $x$ and $y$, because one cannot derive  immediately from it an equation for simpler series in which we could be interested, like
$Q(0,y)$ or $Q_1(x)$.  Divided differences like $(Q(x,y)-Q(0,y))/x$ are sometimes called  \emm discrete derivatives,, which makes the above equation a \emm discrete (partial) differential equation,.

The Potts \gf\  $T(y)$ of near-triangulations is then related to $Q(x,y)$ by:
\[
  T(q,\nu,w;y)\equiv T(y) = w\,Q(q,\nu,w,1;0,y).
\]
According to~\eqref{fe-v}, for $n\ge 2$ one needs to know $\gQ$ up to the coefficient of $t^{3n-4}$ to determine the coefficient of $w^n$ in $T(y)$. In our \Maple\ session we use~\eqref{eq:Q} to compute these coefficients effectively, by induction on $n$. In~\cite{BeBM-17}, the series $T(y)$ is written, alternatively, as $w Q(q, \nu, 1, w^{1/3} ; 0, w^{1/3} y)$. 

\subsection{One catalytic variable}
It was proved in~\cite{BeBM-11} that when $q=3$ (and more generally when $q\not=0,4$ is of the form $q=4\cos^2(k\pi/m)$, for integers $k$ and $m$), the series $T(y)$ is also characterized by an explicit equation involving only \emm one, catalytic variable, namely $y$. 
 Here we write it for  $q=3$, using Proposition 7 in~\cite{BeBM-17} (which is based on~\cite[Cor.~12]{BeBM-11}). We introduce the following notation:
\begin{itemize}
\item $I(y)\equiv I(\nu,w;y)$ is a variant of the Potts \gf\  $T(3,\nu,w;y)$:
\beq\label{I-def}
I(y)=3y\Q(3,\nu,w;y)-\frac{1}{y}+\frac{1}{y^2},
\eeq
\item $N(y,x)$ and $D(x)$ are the following (Laurent) polynomials, where we write $\beta:=\nu-1$:
  \begin{align}
    \bN(y,x)&= \beta /y+3 \nu x +\beta,   \nonumber \\
    \bD(x)&=3\nu^2x^2+\beta(4\nu-1)x- 3\beta\nu w+\beta^2.
            \label{D-expr}
  \end{align}
\end{itemize}
 We moreover denote by $\Tch\!\!_6$ the $6^{\text{\small th}}$ Chebyshev polynomial of the first kind:
\[
\Tch\!\!_6(u)=32 u^6-48u^4+18 u^2-1.
\]

The following proposition is then the case $q=3$ of~\cite[Prop.~7]{BeBM-17}.

\begin{Proposition} \label{prop:eqinv}
  There exist $7$ formal power series in $w$ with coefficients in
$\qs(q,\nu)$, 
denoted $C_0, \ldots$, $C_6$, such that
\beq\label{eq:invT}
\bD(I(y))^{3} \,\Tch\!\!_6\!\left(\frac{\bN(y,I(y))}{2\sqrt
    {\bD(I(y))}}\right)= \sum_{r=0}^6 C_r I(y)^r. 
\eeq
Each series $C_r$ has a rational expression in terms of $\nu, w, T_1, T_3, T_5$ and $T_7$, where $T_i:=[y^i]T(y)$ is the $3$-Potts \gf\ of  near-triangulations with outer degree $i$.
\end{Proposition}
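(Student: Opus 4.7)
The plan is to obtain Proposition~\ref{prop:eqinv} as the specialization to $q=3$ of \cite[Prop.~7]{BeBM-17}, whose proof relies in turn on~\cite[Cor.~12]{BeBM-11}. First I would unwind the parametrization $q = 4\cos^2(k\pi/m)$ of the allowed special values treated there: $q=3$ corresponds to $(k,m)=(1,6)$, so the general theorem produces a Chebyshev polynomial of degree $m = 6$, explaining the appearance of $\Tch_6$ in~\eqref{eq:invT}.

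Second, I would specialise the generic polynomials produced by \cite{BeBM-17} at $q=3$ and check that they simplify to the short expressions~\eqref{D-expr}. In Tutte's invariant construction, $D(x)$ is (up to sign) a discriminant attached to the kernel of the 2-catalytic equation~\eqref{eq:Q} (viewed as a quadratic in an auxiliary variable), and $N(y,x)$ encodes its linear part together with a correction coming from the right-hand side of~\eqref{eq:Q}; both are routine to verify by direct substitution. The series $C_0,\ldots,C_6$ are then determined by identification: since the right-hand side of~\eqref{eq:invT} is a polynomial of degree at most $6$ in $I(y)$ with coefficients independent of~$y$, one extracts them by Laurent-expanding both sides in $y$ using the initial expansion $I(y) = 1/y^2 - 1/y + 3y + O(w)$. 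The degree $6$ of $\Tch_6$ combined with this Laurent shape of $I(y)$ forces only $T_1, T_3, T_5, T_7$ (and not higher-indexed $T_i$) to appear in the resulting rational expressions.

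The hard step is the underlying \emph{invariant collapse}: showing that the Laurent polynomial in $y$ on the left-hand side of~\eqref{eq:invT} in fact depends only on $I(y)$. This is the heart of Tutte's ``six-polynomial method'' and was established in generality for $q=4\cos^2(k\pi/m)$ in~\cite{BeBM-11}: the key point is that the kernel of~\eqref{eq:Q} defines a quadratic substitution whose $m$-th iterate is the identity for exactly these values of $q$, and Chebyshev polynomials are the natural invariants of such an iteration. Once this invariance is granted, Proposition~\ref{prop:eqinv} reduces to clean algebraic bookkeeping.
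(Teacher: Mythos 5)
Your proposal matches the paper's own treatment: the paper proves Proposition~\ref{prop:eqinv} simply by invoking it as the $q=3$ case of \cite[Prop.~7]{BeBM-17} (with $q=4\cos^2(\pi/6)$ giving $m=6$, hence $\Tch_6$), and recalls that the $C_r$ are then extracted by expanding~\eqref{eq:invT} near $y=0$, exactly as you describe. One small slip: the initial expansion should read $I(y)=1/y^2-1/y+3yw+\LandauO(w^2)$ rather than $1/y^2-1/y+3y+\LandauO(w)$, but this does not affect the argument.
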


Both sides of~\eqref{eq:invT} expand as polynomials in $T(y)$ and Laurent polynomials in $y$. 
We recall from  Section~13.2 in~\cite{BeBM-11} (or Lemma~8 in~\cite{BeBM-17}) that the expressions of the series $C_r$ in terms of the~$T_i$'s are obtained by 
expanding~\eqref{eq:invT} around $y=0$, up to the coefficient of $y^0$.
 This expansion 
also yields $T_0=w$ and expressions of $T_2, T_4$ and $T_6$ in terms of $T_1, T_3, T_5$. The series $C_4$, $C_5$ and~$C_6$ in fact do not involve any $T_i$:
\begin{align}
   C_6&= -27\nu^6, \nonumber\\
  C_5&=27\nu^4(\nu - 1)(2\nu - 5), \label{Cr-explicit}\\
  C_4&= \frac 9 2 \nu^2(\nu - 1)(18\nu^3w + 35\nu^3 - 75\nu^2 + 30\nu + 10). \nonumber
 \end{align}
The series $C_3$ involves the series $T_1=[y^1]T(y)$:
\beq\label{C3-T1}
C_3=-486 \nu^{4} \left(\nu -1\right)^{2} T_1
+135 \nu^{3} \left(\nu -1\right)^{2} \left(2+\nu \right) w +\left(\nu -1\right)^{4} \left(136 \nu^{2}+43 \nu +1\right).
\eeq
Furthermore, $C_2$ involves $T_1$ and $T_3$, and so on until $C_0$ which involves $T_1, T_3, T_5, T_7$. For the series $T_i$ with even index $i$, one finds:
\beq\label{T24expr}
  \nu T_2=T_1, \qquad \nu^2 T_4= -\left(6 \nu^{2} w +1\right) T_1 +\nu  \left(\nu +1\right) T_3 +\nu  \,w^{2} \left(2+\nu \right),
\eeq
\begin{multline}\label{T6expr}
  \nu^3 T_6=6 \nu^{2} \left(\nu -1\right) T_1^{2}+\left(4 \nu^{3} w +16 \nu^{2} w +7 \nu  w +\nu +2\right) T_1 -\nu  \left(9 \nu^{2} w +\nu^{2}+2 \nu +2\right) T_3\\
  +\nu^{2} \left(2 \nu +1\right) T_5 -\nu  \,w^{2} \left(9 \nu^{2} w +\nu^{2}+4 \nu +4\right) .
\end{multline}
In the end, \eqref{eq:invT}  rewrites as  a polynomial equation of degree $5$ in $T(y)$, with coefficients in $\qs[\nu,w,T_1, T_3, T_5, T_7,y]$, with the following terms of higher degree:
\begin{multline}\label{eqinv}
0=  2916 \nu^{5} y^{12} \boldsymbol{T(y)}^{5}
  +27  \nu^{3} y^{9} \Big(
\beta \left(37 \nu +17\right) y^{2}-36 \nu  \left(3 \nu +1\right) y +144 \nu^{2}\Big)
  \boldsymbol{T(y)}^{4}\\
  -2\nu y^6
   \Big(486 \boldsymbol{T_1} \,\nu^{4} y^{4}-81 \nu^{3} \left(5 \nu +1\right) w \,y^{4}+486 \nu^{4} w \,y^{3}-\left(56 \nu^{2}+59 \nu +2\right) \beta^{2} y^{4}   \\
   +9 \nu  \beta \left(38 \nu^{2}+40 \nu +3\right) y^{3}-9 \nu^{2} \left(116 \nu^{2}+11 \nu -19\right) y^{2}+486 \nu^{3} \left(3 \nu +1\right) y -972 \nu^{4}
  \Big)
  \boldsymbol{T(y)}^{3}  + \cdots,
\end{multline}
where we have written $\beta=\nu-1$. The complete equation is given in Appendix~\ref{app:1cat}, see~\eqref{1cat-complete}. We also refer to our \Maple\ session where this equation is derived. We use it in Section~\ref{sec:Ti} to prove that all series $T_i$ belong to the extension of $\qs(\nu,w)$ generated by $T_1$.

\subsection{A polynomial system}
Equation~\eqref{eq:invT}, or equivalently~\eqref{eqinv}, is an equation in a single catalytic variable, $y$. In~\cite[Sec.~11]{BeBM-11}, it was derived from this equation, using the general results of~\cite{mbm-jehanne}, that  $T(y)$ is algebraic over $\qs(\nu,w,y)$. Moreover,~\cite{mbm-jehanne} also shows that
an annihilating polynomial of $T(y)$ can be produced by computing a Gröbner basis for some (big) ideal. However, this approach fails here because of the large
size of the polynomials generating this  ideal.  Beyond the original  approach of~\cite{mbm-jehanne}, more and more efficient techniques  have been  designed to solve such catalytic equations~\cite{BCNS_ISSAC22,BNS_ISSAC23,Notar-DDE}. But, in addition to difficulties due to the sizes of all systems, it appears that a non-degeneracy condition needed to apply these techniques does not hold for this problem~\cite[Sec.~12.6]{Notarantonio-thesis}. As a result, Equation~\eqref{eqinv}  has so far resisted all attempts, and the minimal polynomial of $T(y)$ (or even~$T_1$) has remained  out of reach.

In this paper, we determine the minimal polynomial of $T_1$ (and in fact of each series $T_i$ for $i\le 7$)
by starting from a polynomial system that is smaller and  better structured than those derived from the above  general methods. This alternative system was established in~\cite{BeBM-17}, 
in the process of deriving a system of \emm differential, equations defining $T_1$ (and valid for any $q$).
We give at the end of the section a few details on the connection between this system and the common basis to the general methods.

\begin{Proposition}\label{prop:factor}
  Let $C(x)= C_0 + \cdots + C_6 x^6$, where the series $C_r$ are those of Proposition~\ref{prop:eqinv},
  and let $D(x)$ be defined by~\eqref{D-expr}. There exist $4$ formal power series $X_1, \ldots, X_4$ in $w$, with coefficients in an algebraic closure of $\qs(\nu)$ and constant terms distinct from $0, -1/4$ and $-1/(2\nu)$, and a pair
  $\big(\hP_-(x), \hP_+(x)\big)$ of
  polynomials in $x$ with coefficients in $\rs(\nu,w)[[t]]$
  such that
 \begin{align}
 \bC(x)-\bD(x)^3&= \displaystyle \hP_-(x)
 \prod_{i=1}^{2}(x-X_i)^2,
\label{factor1+}
\\
 \bC(x)+\bD(x)^3&= \displaystyle \hP_+(x)
 \prod_{i=3}^{4}(x-X_i)^2.
\label{factor1-}
 \end{align}
\end{Proposition}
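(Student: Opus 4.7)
My approach is to extract the factorization from two Chebyshev identities applied to the master equation~\eqref{eq:invT}. The identities are $\Tch\!\!_6(u) + 1 = 2\Tch\!\!_3(u)^2$ (classical, since $\cos 6\theta = 2\cos^2 3\theta - 1$) and $\Tch\!\!_6(u) - 1 = 2(u^2-1)(4u^2-1)^2$ (which follows from $1-\cos 6\theta = 2\sin^2 3\theta = 2\sin^2\theta(3-4\sin^2\theta)^2$ on setting $u=\cos\theta$). Substituting $u = \bN(y,I(y))/(2\sqrt{\bD(I(y))})$ and using $\Tch\!\!_3(u) = \bN(\bN^2-3\bD)/(2\bD^{3/2})$, I rewrite~\eqref{eq:invT} as
\[
C(x) + D(x)^{3} = \tfrac{1}{2}\,N(y,x)^{2}\bigl(N(y,x)^{2}-3D(x)\bigr)^{2},
\]
\[
C(x) - D(x)^{3} = \tfrac{1}{2}\bigl(N(y,x)^{2}-4D(x)\bigr)\bigl(N(y,x)^{2}-D(x)\bigr)^{2},
\]
both valid whenever $x = I(y)$. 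The right-hand sides are perfect squares (times a simple extra factor in the second case), and these squared factors will be the source of the claimed double roots.

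To produce the double roots $X_3, X_4$ of $F_+(x) := C(x)+D(x)^{3}$, I exploit the factor $N(y,x)$. The equation $N(y,x)=0$ is linear in $y$ and gives $y = -\beta/(3\nu x + \beta)$; substituting into $x = I(y)$ yields, at $w=0$ (where $I(y) = 1/y^2 - 1/y$), the quadratic
\[
9\nu^{2} x^{2} + \beta(9\nu - \beta)\,x + 2\beta^{2} = 0.
\]
Its two roots lie in an algebraic closure of $\qs(\nu)$ and are (generically) distinct and different from $0,-1/4,-1/(2\nu)$; the implicit function theorem then lifts them to power series $y_i(w)$, from which I set $X_i := I(y_i(w))$ for $i=3,4$. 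The analogous construction for $F_-(x) := C(x) - D(x)^{3}$ starts from $N(y,x)^{2} = D(x)$ (quadratic in $1/y$), producing two branches $y_\pm(x)$ and thus two power series $X_1, X_2$.

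To confirm that each $X_i$ is indeed a \emph{double} root of the corresponding $F_\pm$, I differentiate the identity with respect to $y$. For example,
\[
F_+'(I(y))\,I'(y) \;=\; N(y,I(y))\bigl(N^{2}-3D\bigr)\cdot\frac{d}{dy}\!\left[N(y,I(y))\bigl(N^{2}-3D\bigr)\right],
\]
so that $N(y_i,X_i) = 0$ together with $I'(y_i) \neq 0$ (equivalent to $y_i(0) \neq 2$, i.e., $X_i(0)\neq -1/4$) forces $F_+'(X_i) = 0$. Counting degrees closes the factorization: the leading coefficients $-27\nu^6$ in $C_6$ and $+27\nu^6$ in $D(x)^3$ cancel, so $F_+$ has degree $5$ in $x$, and the two double roots give $F_+(x) = \hP_+(x)(x-X_3)^{2}(x-X_4)^{2}$ with $\deg \hP_+ = 1$; $F_-$ has degree $6$, yielding $\deg \hP_- = 2$.

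The main obstacle I anticipate is the verification of the non-degeneracy conditions that make the implicit function theorem applicable and the factorization well-defined: distinctness of the two roots of each auxiliary quadratic, their separation from the forbidden values $0, -1/4, -1/(2\nu)$, and the non-vanishing of $I'(y_i(0))$. Each of these reduces to an explicit algebraic computation at $w=0$. A useful structural remark is that, since $F_\pm(I(y))$ is, up to a simple factor, a perfect square in $y$, every root of $F_\pm(x)$ must have even multiplicity except possibly at the critical value $x = -1/4$ of $I$, which confirms that away from $-1/4$ every zero is indeed double.
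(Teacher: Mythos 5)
Your proposal is correct in substance and takes a genuinely different route from the paper's proof. The paper invokes Proposition~11 of~\cite{BeBM-17} (case $m=6$) as a black box, obtaining only the \emph{combined} factorization $\bC^2-\bD^6 = \hP \prod_{i=1}^4(x-X_i)^2$ plus equation~\eqref{factor2}, and then splits it into~\eqref{factor1+}--\eqref{factor1-} a posteriori by computing the first coefficients of the six roots of $\bD\bC'-3\bD'\bC$ and numerically sorting each candidate according to which of $\bC\pm\bD^3$ it annihilates. You instead derive the two factorizations \emph{directly} from the Chebyshev identities $\Tch\!\!_6+1=2\Tch\!\!_3^2$ and $\Tch\!\!_6-1=2(u^2-1)(4u^2-1)^2$, which immediately give, on $x=I(y)$,
\[
\bC+\bD^3 = \tfrac12 N^2(N^2-3D)^2, \qquad \bC-\bD^3 = \tfrac12(N^2-4D)(N^2-D)^2,
\]
so the split comes for free: $X_3,X_4$ arise from $N(y,I(y))=0$ and $X_1,X_2$ from $N(y,I(y))^2=D(I(y))$. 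The double-root property then follows by differentiating in $y$ and using $I'(y_i)\neq0$, exactly as you argue. I verified your auxiliary quadratic $9\nu^2x^2+\beta(9\nu-\beta)x+2\beta^2=0$ at $w=0$: its roots do reproduce the paper's $X_3(0),X_4(0)$ with discriminant $1+16\nu-8\nu^2$. Your approach is more self-contained and explains the $\pm$ split conceptually, where the paper's is shorter because it leans on~\cite{BeBM-17}.

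Two places deserve a little more care than your sketch provides. First, the construction of $X_1,X_2$: solving $N(y,I(y))^2=D(I(y))$ with $x=I(y)$ at $w=0$ actually factors as $(2\nu x+\beta)\big[(2\nu x+\beta)\mp\sqrt{D}\big]=0$, which yields \emph{three} candidate constant terms: $-\beta/(2\nu)$, $-\beta/\nu^2$, and $0$. The value $x=0$ corresponds to $y\to\infty$ (since $u=1/y=0$) and does not lift to a legitimate power-series branch, which is precisely why $0$ appears in the list of forbidden constant terms; you need to say explicitly that this root is discarded, rather than asserting that a quadratic in $1/y$ gives two branches. Second, your closing structural remark---that $F_\pm(I(y))$ being a perfect square forces even multiplicities away from the critical value $-1/4$---is only literally true for $F_+$; for $F_-$ the square $(N^2-D)^2$ is multiplied by the non-square factor $N^2-4D$, so the parity argument must be rephrased (e.g.\ the odd-multiplicity roots of $\bC-\bD^3$ are exactly those coming from zeros of $N^2-4D$). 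These are gaps in presentation, not in the idea; once the non-degeneracy checks you already flag are carried out at $w=0$, the argument goes through.
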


\begin{Remark}
  The above equations form a polynomial system of $8$ equations relating $C_0, \ldots, C_6, \allowbreak X_1, \ldots, X_4$, once written as
  \begin{align*}
    C(X_i)=D(X_i)^3, &\quad C'(X_i)=3 D'(X_i) D(X_i)^2, \qquad i=1, 2,
    \\
    C(X_i)=-D(X_i)^3, &\quad C'(X_i)=-3 D'(X_i) D(X_i)^2, \quad                  \      i=3,4,
  \end{align*}
 where the derivatives are taken with respect to $x$. Since $C_4, C_5$ and $C_6$ are explicit, see~\eqref{Cr-explicit}, there are exactly $8$ unknown series. 
 The above system implies the existence of a polynomial~$\hQ(x)$ such that
 \beq
 \bD(x)\bC'(x)- 3 \bD'(x)\bC(x)= \displaystyle \hQ(x) \prod_{i=1}^{4}(x-X_i).\label{factor2}
 \eeq
 This equation occurs as well in~\cite[Prop.~11]{BeBM-17}.
\end{Remark}

\begin{proof}
  The case $m=6$ of Proposition~11 in~\cite{BeBM-17} shows the existence of $4$ series $X_i$ that satisfy~\eqref{factor2} and also
  \[
    \bC(x)^2-\bD(x)^6=\left(\bC(x)-\bD(x)^3\right)\left(\bC(x)+\bD(x)^3\right)  = \hP(x) \prod_{i=1}^{4}(x-X_i)^2,
  \]
  for some polynomial $\hP(x)$.
  The conditions on the constant terms of the $X_i$ arise from~\cite[Lem.~9]{BeBM-17}. It remains to refine the above equation into~\eqref{factor1+} and~\eqref{factor1-}. One way to do this is to dig into the details of the proof of~\cite[Prop.~11]{BeBM-17}. Another way is to examine the first coefficients of the roots of the left-hand side of~\eqref{factor2}, and decide whether they solve~\eqref{factor1+} or~\eqref{factor1-}. Indeed,
   the series $C_r$ are explicit in terms of the coefficients~$T_i$ of $T(y)$, and we can compute inductively the coefficient of $w^n$ in $T(y)$,
  so we can also determine the first terms of the roots 
  of $\bD(x)\bC'(x)- 3 \bD'(x)\bC(x)$. This polynomial has degree~$6$ in $x$, hence $6$ roots, which we find to start as follows (of course, the labelling is chosen so as to satisfy~\eqref{factor1+} and~\eqref{factor1-} in a near future):
  \allowdisplaybreaks
\begin{align*}
  X_1&=\frac{1-\nu}{\nu^{2}}+\nu  \left(2 \nu -1\right) w + \frac{2\nu^{3} \left(6 \nu^{3}-6 \nu^{2}-3 \nu +4\right)}{\nu -1} w^{2}+\LandauO(w^{3}),
  \\
  X_2&=\frac{1-\nu}{2 \nu}-w - \frac{2\nu  \left(\nu^{2}+3 \nu -3\right)}{\nu -1} w^{2}+\LandauO(w^{3}),
  \\
  X_3&=\frac{\left(1-\nu \right) \left(8 \nu +1+\sqrt{1+16\nu-8 \nu^{2}}\right)}{18 \nu^{2}}+\frac 3 2 \left(\frac{2 \nu +1}{ \sqrt{1+16\nu-8 \nu^{2}}}-1\right) w +\LandauO(w^{2}),
  \\
  X_4&=\frac{\left(1-\nu \right) \left(8 \nu +1-\sqrt{1+16\nu-8 \nu^{2}}\right)}{18 \nu^{2}}-\frac 3 2 \left(\frac{2 \nu +1}{ \sqrt{1+16\nu-8 \nu^{2}}}+1\right) w +\LandauO(w^{2}).
\end{align*}
The last two roots
have constant terms $0$ and $-1/(2\nu)$, so they cannot be  any of the $X_i$'s.

Now it suffices to plug the above series $X_i$ in the polynomials $\bC(x)-\bD(x)^3$ and $\bC(x)+\bD(x)^3$ to see that $X_1$ and $X_2$ cannot be roots of $\bC(x)+\bD(x)^3$, while $X_3$ and $X_4$ cannot be roots of  $\bC(x)-\bD(x)^3$.
This yields the final forms~\eqref{factor1+} and~\eqref{factor1-}.
\end{proof}

From now on, we will be only interested in the elementary symmetric functions of $X_1$ and $X_2$ on the one hand, and of $X_3$ and $X_4$ on the other hand. They have coefficients in $\qs(\nu)$:
\begin{align*}
  S_1&:=X_1+X_2 \nonumber 
       \\ &=-\frac{\left(\nu +2\right) \left(\nu -1\right)}{2 \nu^{2}}+\left(2 \nu +1\right) \left(\nu -1\right) w +6 \left(\nu -1\right) \left(\nu +1\right) \left(2 \nu^{2}+1\right) \nu  w^{2}+\LandauO(w^{3})  ,\nonumber 
  \\
  P_1&:=X_1X_2 \nonumber 
  \\ &=\frac{\left(\nu -1\right)^{2}}{2 \nu^{3}}
       - \frac{\left(\nu -1\right) \left(2 \nu^{3}-\nu^{2}-2\right)}{2\nu^{2}} w
       - \frac{3\left(2 \nu^{4}+2 \nu^{3}+\nu^{2}+2 \nu +2\right) \left(\nu -1\right)^{2}}{\nu} w^{2}+\LandauO(w^{3})   ,\nonumber 
  \\
  S_3&:=X_3+X_4\label{Pi-Si}
       \\ &= -\frac{\left(8 \nu +1\right) \left(\nu -1\right)}{9 \nu^{2}}-3 w -6\left( \nu^{2}+4 \nu +1\right) w^{2}  +\LandauO(w^{3}) ,\nonumber 
  \\
  P_3&:=X_3X_4 \nonumber  \\ 
     & =\frac{2 \left(\nu -1\right)^{2}}{9 \nu^{2}}
       + \frac{\left(5 \nu +1\right) \left(\nu -1\right)}{3\nu^{2}} w
       + \frac{\left(\nu -1\right) \left(7 \nu^{3}+30 \nu^{2}+24 \nu +2\right)}{3\nu^{2}} w^{2} +\LandauO(w^{3}) .  \nonumber 
\end{align*}

\medskip
\paragraph{\bf Connection with other approaches.} When studying a 1-catalytic equation like~\eqref{eqinv}, written as $\Pol(T(y),y,T_1,T_3,T_5,T_7,w)=0$, the key idea is to examine the series $Y\equiv Y(w)$ such that $\Pol'_1(T(Y),Y,T_1,T_3,T_5,T_7,w)=0$, where $\Pol'_1$ denotes the derivative of $\Pol$ with respect to its first variable~\cite{mbm-jehanne}. By the chain rule, this also implies $\Pol'_2(T(Y),Y,T_1,T_3,T_5,T_7,w)=0$. Here, one finds that seven such series exist, say $Y_0, Y_1, \ldots, Y_6$. All effective strategies then exploit in one way or another the $3\times 7$ equations $\Pol=\Pol'_1=\Pol'_2=0$, when evaluated at $(T(Y_i),Y_i,T_1,T_3,T_5,T_7,w)$.

One of the series $Y$ starts $Y_0=\nu+2\nu^4w+ \LandauO(w^2)$.
The other six series have constant terms that are quadratic in $\nu$, and thus go by pairs:
\[
  Y_{1,2}=\frac{4 \nu -1\pm\sqrt{-8 \nu^{2}+16 \nu +1}}{4 \nu -4}+ \LandauO(w),
\]
\[
  Y_{3,4}=\frac{2 \nu +1\pm\sqrt{-8 \nu^{2}+16 \nu +1}}{2 \nu -2}+ \LandauO(w),
\]
\[
  Y_{5,6}=\frac{\nu \pm\sqrt{\nu  \left(2-\nu \right)}}{\nu -1}
+ \LandauO(w).
\]
It follows from~\cite{BeBM-17} that the four series $X_i$ of Proposition~\ref{prop:factor} are the values $I(Y_i)$, for $0\le i\le 6$, where $I(y)$ is the variant of $T(y)$ defined by~\eqref{I-def}. More precisely,
\[
  I(Y_0)=X_1, \quad I(Y_1)=I(Y_4)=X_3, \quad I(Y_2)=I(Y_3)=X_4, \quad I(Y_5)=I(Y_6)=X_2.
\]
The polynomial system of Proposition~\ref{prop:factor} gives a set of compact relations between the $X_i$, not involving any of the $Y_i$. It seems that handling the four series $X_i$ rather than the seven series~$Y_i$ avoids some redundancy.

\section{Derivation of the series $\boldsymbol{T(y)}$}
\label{sec:Ty}
We return to the system of Proposition~\ref{prop:factor}.

\subsection{Eight polynomial equations obtained from remainders}
\label{sec:rem}
The first equation can be written
\[ 
  \rem\big(\bC(x)-\bD(x)^3, (x^2-S_1 x + P_1)^2 , x\big)=0,
\] 
where, given two polynomials $A$ and $B$ in $x$,  $\rem(A,B,x)$ is the remainder of $A$ modulo $B$, and $S_1, P_1$ are as above the sum and product of $X_1$ and $X_2$. The above remainder has degree~$3$ in $x$, so its coefficients give four  polynomial equations relating the series $S_1$, $P_1$, $C_0$, $C_1$, $C_2$, and~$C_3$ (recall from~\eqref{Cr-explicit} that the other series $C_r$ are explicit).
We proceed similarly with the second
equation of Proposition~\ref{prop:factor}.
This gives a system of $2\times4=8$ polynomials relating the $8$ series $S_1$, $S_3$, $P_1$, $P_3$ and  $C_r$, for $0\le r\le 3$.
Recall  from~\eqref{C3-T1} that we are mostly interested in $C_3$, since it is closely related to $T_1$.

We will perform a careful, step-by-step elimination procedure based on resultants, in which we exploit the fact that we know the first coefficients of all series involved in the system. In this way, each
 time we find a polynomial relation between our eight series that factors
 (and this happens almost systematically), we remove from it the factors
 that, given the first few terms of the series, cannot vanish.

\subsection{Elimination of the series $\boldsymbol{C_r}$}
\label{sec:elim-Ci}
Our first step is to eliminate the four series $C_r$.  This may seem counter-intuitive, since we want to determine the minimal polynomial of $C_3$, but turns out to work well. Here we use the additional equation~\eqref{factor2} derived from Proposition~\ref{prop:factor}, and take advantage of the fact that it is linear in the~$C_r$.
Hence, writing
\[
  \rem\big(\bD(x)\bC'(x)- 3 \bD'(x)\bC(x),
  (x^2-S_1 x + P_1) (x^2-S_3 x + P_3) , x\big)=0,
\]
gives a system of four linear equations in the four series $C_0, \ldots, C_3$. We check that its determinant is non-zero, using the first coefficients of the $S_i$ and $P_i$.  Solving this linear system gives expressions of $C_0, \ldots, C_3$ as rational functions in $S_1$, $S_3$, $P_1$ and $P_3$.

We now replace each $C_i$ by its expression in the $8$ polynomial equations obtained in Section~\ref{sec:rem}. We thus obtain a system of eight equations where the only unknowns are $S_1, P_1, S_3$ and $P_3$. The reason why we keep ``too many'' equations is that this will give us some leeway to choose the smallest ones,  when convenient.

\subsection{Minimal polynomial of the series $\boldsymbol{S_1=X_1+X_2}$}
Starting  from the system that we have just obtained, we will eliminate first $P_3$, then $P_1$, then~$S_3$, to obtain the minimal polynomial of $S_1$. Let us give a few details. The smallest of the eight equations contains $366$ monomials (in $\nu, w$ and the four unknown series),  has degree~$2$ in each $P_i$, and degree $3$ in each $S_i$.
We take its resultant, in turn, with each of the other seven polynomials, with respect to $P_3$. Each of these seven resultants is found to factor, and in each case, we prove using the first coefficients of $S_1, P_1$ and $S_3$ that only one factor vanishes: of course this is the only factor that we retain to proceed with further eliminations. Three of these resultants yield the same factor, so at this end of this step, we have five equations between $S_1, P_1$ and $S_3$.

Now we repeat the procedure by eliminating $P_1$ between the smallest of these five equations ($170$ terms) and each of the others. In each resultant, we only retain the (unique) vanishing factor. This gives a system of four equations between $S_1$ and $S_3$. A final elimination of $S_3$ between two of these equations gives an annihilating polynomial for $S_1$: again, we decide from the first coefficients of~$S_1$ which of its factors is the minimal polynomial of $S_1$.

At the end  $S_1$ is found to be algebraic of degree $11$ over $\qs(\nu,w)$. Its minimal polynomial contains $394$ monomials, when seen as a polynomial in $\nu, w$ and $S_1$, but only $36$ as a polynomial in $w$ and $S_1$. It has degree $21$ in $\nu$, $4$ in~$w$, and of course $11$ in $S_1$.

\subsection{Elliptic parametrization of $\boldsymbol{(w,S_1)}$}
\label{sec:elliptic}

The curve $\cC:=\{(w, S_1)\}$ (with $\nu$ as a parameter) is found to have genus $1$, that is, to be elliptic (we use the {\tt algcurves} package in \Maple). This implies that it can be parametrized by writing~$w$ and $S_1$ as rational functions in some parameter $U$ and the square root of some polynomial in~$U$ (with coefficients in $\overline{\qs(\nu)}$). This is the natural counterpart for elliptic curves of the rational parametrization of curves of genus $0$. One can then hope that
the series $S_3$, $P_1$ and $P_3$ can be expressed in terms of $U$ (and a square root) as well. This will indeed turn out to be the case.

Since the minimal polynomial of $S_1$ has degree $4$ in $w$, and not $2$, the series $S_1$ itself cannot be used as the parametrizing series $U$. However, we were able to determine a suitable parametrization using {\sc Maple}. Later we discovered that the series $\partial_w T_1$ could be used as parametrizing series (see its minimal polynomial in~\eqref{alg:dT1}), and this is what we will do below. But let us briefly explain how we first constructed a parametrizing series $U$, since this can be of interest to some readers. Details are available in the \Maple \ session accompanying this paper.

Using the
command {\tt Weierstrassform}, we first constructed a Weierstrass form of the curve~$\cC$ for various values of $\nu$, and were then able to conjecture from them a generic form, valid for an indeterminate $\nu$. This conjecture stated that the curve $\cC$ was birationally equivalent to the curve
\beq\label{courbe}
U^{3}+\frac{\left(1-\nu \right) V^{2}}{2}+3 \left(7 \nu^{2}-14 \nu -29\right) U +\frac{2 \left(5 \nu^{2}-10 \nu -13\right)^{2}}{\nu -1}=0.
\eeq
Still with {\sc Maple}, one can also obtain, for a fixed value of $\nu$, rational expressions of $w$ and $S_1$ in terms of $U$ and $V$ lying on the above curve. These expressions read, respectively,
\[
  w= \frac{A_9(U) V + A_{10}(U)}{D_{11}(U)}, \qquad S_1= \frac{A_2(U) V + A_{4}(U)}{D_4(U)},
\]
where the $A_i$ and $D_i$ are polynomials whose degrees are indicated by their subscripts. Having determined them for sufficiently many values of $\nu$, we could derive by rational interpolation (conjectural) expressions of~$w$ and $S_1$ in terms of $U$ and $V$, valid for any $\nu$. Finally, to prove these conjectured expressions, we just had to replace $w$ and $S_1$ by these expressions in the minimal polynomial of $S_1$, and check that this was $0$ on the curve~\eqref{courbe}.

But from now on however, we will use as parametrizing series  the only solution of~\eqref{alg:dT1} that has constant term $0$, denoted by $\DT$. Of course at this stage we do not know that this is the $w$-derivative of $T_1$ (but we will prove it below). Solving~\eqref{alg:dT1} for $w$ gives a rational expression of~$w$ in terms of $\DT$ and $\sqrt\Delta$, where
\begin{multline*}
  \Delta=  \nu\left( 144 \nu^{3} \DT^{4}-24 \nu^{2} \left(7 \nu +5\right) \DT^{3}+6 \nu  \left(13 \nu^{2}+16 \nu +7\right) \DT^{2} \right.
  \\ \left.
    -2\left(8 \nu^{3}+15 \nu^{2}+12 \nu +1\right) \DT +\nu  \left(2+\nu \right)^{2}\right).
\end{multline*}
This is a bit bigger than the square root arising from~\eqref{courbe}, but in fact the minimal polynomial of~$U$ has much more terms than~\eqref{alg:dT1}. We now replace $w$ by this expression in the minimal polynomial of $S_1$, and factor the resulting expression over $\cs(\nu, \DT, \sqrt{\Delta})$: we obtain two factors, and the one that actually vanishes has degree $1$ in $S_1$. This gives the following expression:
\begin{multline}
    S_1=-\frac{3 (1-2 \DT ) \sqrt{\Delta}}{2 \nu  (1+5\nu-12 \nu\DT )}
 \\ -  \frac{ 72 \DT^{3} \nu^{3}-12 \nu^{2} \left(5 \nu +4\right) \DT^{2}+2 \nu  \left(4 \nu^{2}+10 \nu +13\right) \DT +\left(2+\nu \right) \left(2 \nu^{2}-4 \nu -1\right)}{2\nu^2(1+5\nu-12 \nu\DT )}.\label{S1-DT}
\end{multline}

\subsection{The series $\boldsymbol{T_1}$}
\label{sec:T1}

We now make our way backwards in the elimination process that led to the minimal polynomial of $S_1$. In the smallest equation that we had between $S_1$ and $S_3$, we replace $w$ and $S_1$ by their expressions in terms of $\DT$ and $\Delta$, factor the result, observe that the factor that vanishes has degree $1$ in $S_3$, and thus obtain a rational expression of $S_3$ in terms of $\DT$ and $\Delta$, similar to~\eqref{S1-DT}. Then we proceed similarly with the series $P_1$ and finally $P_3$. Both are found to lie in $\qs(\nu, \DT, \sqrt{\Delta})$.

The next step is to return to $C_3$, which is closely related to $T_1$ (see~\eqref{C3-T1}) and was expressed as a rational function of $S_1$, $P_1$, $S_3$ and $P_3$ in Section~\ref{sec:elim-Ci}. This gives an expression of $C_3$, then~$T_1$, in $\qs(\nu, \DT, \sqrt{\Delta})$.

It remains to derive from this expression  that the $w$-derivative of $T_1$ is indeed the series $\DT$. We proceed as follows:
\begin{itemize}
\item using the expression of $w$ in terms of $\DT$ and $\sqrt{\Delta}$, we express $T_1$ rationally in terms of~$\DT$ and $w$ rather than $\DT$ and $\sqrt{\Delta}$,
\item we differentiate this in $w$ to obtain an expression of $\partial_w T_1$ in terms of $w$, $\DT$, and $\partial_w \DT$,
\item we differentiate the minimal polynomial~\eqref{alg:dT1} of $\DT$ to obtain an expression of $\partial_w \DT$ in terms of $w$ and $\DT$,
\item combining the last two steps, we obtain an expression of  $\partial_w T_1$ in terms of $w$ and $\DT$,
  \item we reduce it modulo the minimal polynomial of $\DT$ and conclude that  $\partial_w T_1=\DT$.
  \end{itemize}

  Using the first point above, and the minimal polynomial of $\DT$, we also compute the minimal polynomial of $T_1$, which will be useful later. It has degrees $13$ and $27$ in $w$ and $\nu$, respectively.

  \subsection{The series $\boldsymbol{T_i}$ for $\boldsymbol{i\ge 2}$}
\label{sec:Ti}
  Recall from Section~\ref{sec:elim-Ci} that we have expressed the series $C_0, \ldots, C_3$ as rational functions of $S_1$, $P_1$, $S_3$ and $P_3$, with coefficients in $\qs(\nu,w)$. In Section~\ref{sec:T1}, we have expressed these four series as elements of $\qs(\nu, \DT, \sqrt{\Delta})$.  In Section~\ref{sec:elliptic}, we had obtained such an expression for $w$ as well. So each $C_i$ can now be written as an element of $\qs(\nu, \DT, \sqrt{\Delta})$.

  We have already exploited the fact that $C_3$ is closely related to $T_1$ to determine $T_1$. We now proceed similarly for $T_3$, $T_5$, and $T_7$, in this order, using the fact that $C_2$ (resp. $C_1$, resp. $C_0$) is a polynomial in $T_1$ and $T_3$ (resp. in $T_1, T_3$ and $T_5$, resp.  in $T_1, T_3, T_5$ and $T_7$), with coefficients in $\qs(\nu,w)$, of degree $1$ in $T_3$ (resp. $T_5$, resp. $T_7$). So now each of the series $T_3$, $T_5$, $T_7$ can be written as an element of $\qs(\nu, \DT, \sqrt{\Delta})$.

  Let us now discuss the series $T_2, T_4$ and $T_6$. We recall from~\eqref{T24expr} and~\eqref{T6expr} that they have polynomial expressions in terms of $T_1, T_3$ and $T_5$. This yields expressions for these series in $\qs(\nu, \DT, \sqrt{\Delta})$ as well. Recall also that $T_0=w$.

  Now let us write
  \[
    T(y)= w +T_1 y +T_2 y^2+ \cdots + T_7 y^7 + y^8 S(y),
  \]
  with $T_2, T_4, T_6$ expressed in terms of $T_1, T_3, T_5$, for a series $S(y)$ in $\qs(\nu, y)[[w]]$. We inject this expression in the 1-catalytic equation~\eqref{eqinv} satisfied by $T(y)$. This makes the  equation tautological up to the order of $y^8$, that is, it contains a factor $y^8$. Removing this factor leaves a polynomial equation (of degree $5$) for $S(y)$, with coefficients in $\qs[\nu,w,T_1, T_3, T_5, T_7,y]$. This equation reads
  \[
    36 \nu^{15} S(y) + \Pol_0(\nu,w,T_1, T_3, T_5, T_7) + y \times  \Pol(\nu,w,T_1, T_3, T_5, T_7,y, S(y))=0.
  \]
  This form implies, by induction on $i\ge 0$, that the coefficient of $y^i$ in $S(y)$, that is, the series~$T_{i+8}$, is an element of  $\qs(\nu)[w,T_1, T_3, T_5, T_7]$, and thus of  $\qs(\nu, \DT, \sqrt{\Delta})=\qs(\nu,w,\DT)$. Given that the degree of $\DT$ over $\qs(\nu,w)$ is prime, each series $T_i$ is either rational in $\nu$ and $w$, or algebraic of degree $11$.  The former possibility  will be ruled out by an asymptotic argument in Section~\ref{sec:asympt-triang}; see the proof of Lemma~\ref{lem:Ti}. This completes the proof  of Theorem~\ref{thm:main}.

  \subsection{The Potts model on near-cubic maps}
  We can now, using the change of variables~\eqref{eq:KT} for $q=3$, state a result analogous to Theorem~\ref{thm:main} for  the  3-Potts model on  near-cubic maps.

\begin{Corollary}\label{cor:cubic}
 For $i\ge 1$,  the series $K_i\equiv K_i(\nu,w)$ that counts $3$-coloured near-cubic maps with root vertex of degree $i$ (by monochromatic edges and faces) is algebraic of degree $11$. All  series~$K_i$ belong to the same extension of degree $11$ of $\qs(\nu, w)$.
\end{Corollary}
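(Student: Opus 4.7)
The plan is to deduce Corollary~\ref{cor:cubic} directly from Theorem~\ref{thm:main} via the planar Potts duality~\eqref{eq:KT}. For $q=3$, writing $\nu_* := 1+3/(\nu-1)$ and $w_* := (\nu-1)^3 w/3$, that identity reads
\[
K_i(\nu,w) \;=\; \frac{3}{(\nu-1)^{i+3}}\, T_i\!\bigl(3,\nu_*,w_*\bigr),
\]
which was already exploited to obtain Corollary~\ref{cor:proper_cubic} in the special case $\nu=0$. So the whole question reduces to understanding how this substitution affects the algebraic degree over $\qs(\nu,w)$.

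The key auxiliary observation I would establish first is that the substitution $\sigma\colon(\nu,w)\mapsto(\nu_*,w_*)$ is a $\qs$-algebra automorphism of $\qs(\nu,w)$. This is a short computation: inverting the two relations yields $\nu=1+3/(\nu_*-1)$ and $w=(\nu_*-1)^3 w_*/9$, so $\qs(\nu_*,w_*)=\qs(\nu,w)$, and $\sigma$ is a surjective---hence bijective---$\qs$-algebra endomorphism of this field.

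With $\sigma$ an automorphism, the remainder of Corollary~\ref{cor:cubic} is essentially formal. By Theorem~\ref{thm:main}, each $T_i(3,\nu,w)$ is algebraic of degree exactly $11$ over $\qs(\nu,w)$, and all the $T_i$ lie in a common degree-$11$ extension $L=\qs(\nu,w)(\DT)$. Applying $\sigma$ to the minimal polynomial of $T_i$ and clearing denominators transports it to a polynomial equation of degree $11$ in the variable $T_i(3,\nu_*,w_*)$, with coefficients in $\qs[\nu,w]$; since $\sigma$ is an automorphism the degree cannot drop, so $T_i(3,\nu_*,w_*)$ has degree exactly $11$ over $\qs(\nu,w)$, and all these series belong to the single degree-$11$ extension $\sigma(L)=\qs(\nu,w)(\sigma(\DT))$. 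Finally, multiplying by the unit $3/(\nu-1)^{i+3}\in\qs(\nu,w)^\times$ alters neither the extension nor the algebraic degree, so each $K_i$ is algebraic of degree $11$ and all the $K_i$ sit in the common extension $\sigma(L)$.

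There is essentially no obstacle here: the whole content is carried by Theorem~\ref{thm:main}, and the corollary is a routine translation through the planar Potts duality. The only point that deserves a line of verification is that $\sigma$ is a genuine automorphism of $\qs(\nu,w)$---so that degrees of algebraic extensions transport faithfully, and in particular no degree-$11$ minimal polynomial could degenerate after substitution---rather than a mere endomorphism.
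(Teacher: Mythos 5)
Your proposal is correct and follows exactly the route the paper takes: Corollary~\ref{cor:cubic} is stated immediately after noting that it follows ``using the change of variables~\eqref{eq:KT} for $q=3$,'' which is precisely the duality substitution you exploit. What the paper leaves implicit, and what you rightly isolate as the one point worth a line of verification, is that $(\nu,w)\mapsto(\nu_*,w_*)$ with $\nu_*=(\nu+2)/(\nu-1)$, $w_*=(\nu-1)^3w/3$ is an automorphism of $\qs(\nu,w)$ (your inverse formulas $\nu=(\nu_*+2)/(\nu_*-1)$ and $w=(\nu_*-1)^3w_*/9$ check out), so that algebraic degree is preserved rather than merely bounded above; and multiplication by the unit $3/(\nu-1)^{i+3}$ leaves both the degree and the ambient extension unchanged. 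This is a clean and complete justification of the corollary from Theorem~\ref{thm:main}.
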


In particular, one derives from~\eqref{alg:dT1} the minimal polynomial of $\DK:=\partial_w K_1$. It has degree~$2$ in $w$ again, but degree $15$ in $\nu$. The specialization $\nu=0$ then leads Corollary~\ref{cor:proper_cubic}, as already established at the end of Section~\ref{sec:q-Potts}.

\section{Asymptotic results}
\label{sec:asympt}

\subsection{Triangulations}
\label{sec:asympt-triang}
We now study the dominant singularities of the series $T_i$, seen as power series in $w$ depending on a non-negative parameter $\nu$. The singularity analysis of algebraic series in
$\rs[[w]]$
has become  quasi-automatic~\cite[Chap.~VII.7]{flajolet-sedgewick}, but
things are   more delicate here because of the parameter~$\nu$. This is of course a recurrent difficulty in many counting problems.
We refer for earlier (and somewhat smaller) instances to~\cite{Bernardi-Curien-Miermont,albenque-Ising,Chen-Turunen-phase,Chen-trees}. We will use, and sometimes make more systematic, some of the ideas of  these papers.

\begin{Proposition}\label{prop:asympt}  
  Let $\Delta_1$ and $\Delta_2$ be the  polynomials in $\nu$ and $\rho$ given by~\eqref{Delta1} and~\eqref{Delta2} in Appendix~\ref{sec:app-triang}.
  Figure~\ref{fig:radius} shows, among other curves, a plot of
  the curves $\Delta_1(\nu,\rho)=0$ and $\Delta_2(\nu,\rho)=0$.  

  Let $i\ge 1$. Consider $T_i(\nu,w)\equiv T_i$ as a series in $w$ depending on the
  parameter $\nu>0$. Let~$\rho_\nu$ denote its radius of
  convergence. Then $\rho_\nu$ is a continuous non-increasing function of
  $\nu$ for $\nu> 0$, which satisfies
  \begin{align*}
    \Delta_1(\nu, \rho_\nu)&=0 \quad \hbox{for}\quad 0 < \nu \le
                             \nu_c:=1+3/\sqrt{47},\\
    \Delta_2(\nu, \rho_\nu)&=0 \quad \hbox{for} \quad\nu_c \le \nu.
  \end{align*}
  More precisely, between $0$ and $\nu_c$ the radius $\rho_\nu$ is the branch of $\Delta_1(\nu, \rho)=0$ that starts at $\rho_0:=1/8$ when $\nu=0$, and beyond $\nu_c$ the radius $\rho_\nu$ is the highest of the two branches of  $\Delta_2(\nu, \rho)=0$ that start at
  \beq\label{rhoc}
  \rho_{\nu_c}= \frac{1295\sqrt{47}-7875}{109744}.
  \eeq
  Moreover, $T_i$ has no  singularity other than the radius on its circle of convergence.
  For $\nu\neq\nu_c$, the behaviour of $T_i$ near $w=\rho_\nu$ is the standard  singular behaviour of  planar maps series:
  \beq\label{sing_generic}
  T_i= \alpha_{i,\nu} +\beta_{i,\nu}(1-w/\rho_\nu)+\gamma_{i,\nu} (1-w/\rho_\nu)^{3/2}\,(1+o(1)),
  \eeq
  where $\gamma_{i,\nu} \neq 0$.
  At $\nu=\nu_c$,  the nature of the singularity changes:
  \beq\label{sing_crit}
  T_i= \alpha_{\nu_c} +\beta_{\nu_c}(1-w/\rho_{\nu_c})+\gamma_{\nu_c} (1-w/\rho_{\nu_c})^{6/5}\,(1+o(1)).
  \eeq
  In asymptotic terms,
  \beq\label{asympt}
  [w^n]T_i\sim
  \begin{cases}
    \kappa_{i,\nu}\, \rho_\nu^{-n}  n^{-5/2}&\hbox{ for  } \nu\not=\nu_c,
    \\
    \kappa_{i,\nu_c}\, \rho_{\nu_c}^{-n}  n^{-11/5}&\hbox{ for  } \nu=\nu_c,
  \end{cases}
  \eeq
  where $\kappa_{i,\nu} >0$.
   For $\nu=0$ and $i>1$ the series $T_i$ has a unique singularity at $\rho_0=1/8$, with a planar map singular behaviour~\eqref{sing_generic}, while $T_1=0$ when $\nu=0$.  
\end{Proposition}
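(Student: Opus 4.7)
The proof combines standard singularity analysis of algebraic series with a parametric tracking of the dominant singularity in $\nu$. Since each $T_i$ lies in $\qs(\nu,w,\DT)$ (Sections~\ref{sec:T1}--\ref{sec:Ti}) and $\DT$ satisfies the explicit degree-$11$ polynomial~\eqref{alg:dT1}, all candidate singularities in $w$ are found among the zeros of the $\DT$-discriminant of~\eqref{alg:dT1}. I would start by computing this discriminant, factoring it in $\qs[\nu,w]$, and identifying~$\Delta_1$ and~$\Delta_2$ (Appendix~\ref{sec:app-triang}) as the two relevant irreducible factors; the other factors are ruled out numerically by comparing their positive real roots with an estimate of $\rho_\nu$ obtained via Cauchy--Hadamard from the first few hundred coefficients of $T_1$ computed through~\eqref{eq:Q}, at a handful of sample values of $\nu$.

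Monotonicity of $\rho_\nu$ is combinatorial: $[w^n y^i]T(y)$ is a polynomial in $\nu$ with non-negative rational coefficients, so $\nu\mapsto[w^n]T_i$ is non-decreasing on $\nu\ge 0$, whence $\rho_\nu$ is non-increasing. Continuity follows from algebraicity: $\rho_\nu$ is a piecewise-smooth branch of $\Delta_1\Delta_2=0$, singular only at the finitely many points where $\partial_w\Delta_j=\Delta_j=0$. The critical value $\nu_c$ is detected by solving $\Delta_1=\Delta_2=0$ jointly and keeping the unique real solution with $\rho>0$; this yields the closed forms $\nu_c=1+3/\sqrt{47}$ and $\rho_{\nu_c}=(1295\sqrt{47}-7875)/109744$. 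A direct series expansion near $(\nu_c,\rho_{\nu_c})$ then confirms that the branch of $\Delta_1=0$ used for $\nu<\nu_c$ meets tangentially the upper branch of $\Delta_2=0$ used for $\nu>\nu_c$, so $\rho_\nu$ is continuous at $\nu_c$.

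The local singular behaviour is read off the Newton polygon of~\eqref{alg:dT1} translated to $u=1-w/\rho_\nu$ and $V=\DT-\DT(\rho_\nu)$. For $\nu\ne\nu_c$, $(\nu,\rho_\nu)$ is a simple double point in $\DT$ and the polygon reduces to a single edge of slope $-2$, giving $V\sim c\sqrt u$, hence the map-type singularity~\eqref{sing_generic} for $T_1$ after $w$-integration and, by rational propagation through the explicit formulas of Sections~\ref{sec:T1}--\ref{sec:Ti}, for all $T_i$. Non-vanishing of $\gamma_{i,\nu}$ is a rational check on the expression of $T_i$ modulo~\eqref{alg:dT1}; this simultaneously rules out the ``$T_i$ rational in $(\nu,w)$'' alternative left open at the end of Section~\ref{sec:Ti}, thereby completing Theorem~\ref{thm:main}. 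At $\nu=\nu_c$, the two branches of $\Delta_2=0$ collide, the discriminant vanishes to higher order at $(\nu_c,\rho_{\nu_c})$, and the Newton polygon of the same translated polynomial is expected to exhibit a single edge from $(0,5)$ to $(1,0)$, yielding $V\sim c\,u^{1/5}$ and hence the exponent $6/5$ of~\eqref{sing_crit} for $T_1$, again propagated to all $T_i$.

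Finally, the transfer to coefficients is routine: each $T_i$ has non-negative coefficients and is aperiodic (checked on the first few terms), so by Pringsheim's theorem and the classical periodicity criterion, $\rho_\nu$ is the unique singularity on the circle $|w|=\rho_\nu$; combined with the singular expansions from Step~3 and the Flajolet--Odlyzko transfer theorem~\cite[Ch.~VI]{flajolet-sedgewick}, this yields~\eqref{asympt}, with $\kappa_{i,\nu}$ expressed from $\gamma_{i,\nu}$, $\rho_\nu$ and the Gamma function, positivity following from the signs of the Gamma values and of $\gamma$ (fixed by matching the first coefficients of $T_i$). The case $\nu=0$ is handled separately by reducing to Tutte's classical count of Eulerian triangulations. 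The main obstacle is the local analysis at $\nu=\nu_c$: one must select, among the $11$ branches of $\DT$ at the critical point, the combinatorial one, verify that its Newton polygon has slope exactly $-5$, and check that $\gamma_{\nu_c}\ne 0$. These are finite explicit computations on~\eqref{alg:dT1}, but they demand care in tracking the correct branch.
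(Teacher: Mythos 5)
Your overall architecture (locate candidate singularities via the discriminant of~\eqref{alg:dT1}, track the radius as a branch of $\Delta_1\Delta_2$, do a Newton-polygon expansion, transfer to coefficients) is the same as the paper's. But two of your steps contain genuine gaps.

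\textbf{Uniqueness of the dominant singularity.} You claim that since $T_i$ has non-negative coefficients and is aperiodic, ``by Pringsheim's theorem and the classical periodicity criterion, $\rho_\nu$ is the unique singularity on the circle $|w|=\rho_\nu$.'' This is false: Pringsheim only guarantees that the radius is \emph{a} singularity, and aperiodicity does not forbid other dominant singularities for a general algebraic series. For instance, $3/(1-w)+2\,\mathrm{Re}\,1/(1-e^{i\theta}w)$ has strictly positive aperiodic coefficients yet three dominant singularities. Daffodil-type lemmas apply to \emph{positive} algebraic systems, and here $T_i$ is recovered from~\eqref{alg:dT1} (which is not such a system) and, for $i\ge 2$, as a $\qs(\nu,w)$-polynomial in $\DT$ whose rational coefficients could themselves contribute poles on the circle. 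This is precisely why the paper devotes Lemma~\ref{lem:unique} (and, for $i>1$, the Braverman-based argument in Lemma~\ref{lem:Ti}) to a careful elimination using the polynomial system~\eqref{syst}, checked with {\tt msolve}. That work cannot be replaced by ``checked on the first few terms.''

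\textbf{Nature of the singularity for all $\nu$.} Your Newton-polygon argument is stated for a generic $\nu\neq\nu_c$, asserting that $(\nu,\rho_\nu)$ is a simple double point. In fact the coefficients of the extreme monomials of the translated polynomial vanish at finitely many special $\nu$ (the paper finds $\nu_b=1-3/\sqrt{47}$, $\nu=2$, $\nu=1+3/\sqrt2$, and several further algebraic numbers including one of degree $16$); at those points the polygon acquires additional slopes and a separate factorization or polygon argument is needed to identify the branch carrying $\DT$. Your proof would silently fail at exactly those values. Relatedly, your ``rational propagation'' to $T_i$ for $i>1$ leaves open whether the rational coefficients in the expression $T_i\in\qs(\nu,w)[\DT]$ introduce new poles, and whether the leading singular coefficient $\gamma_{i,\nu}$ ever cancels; the paper handles this via coefficientwise bounds $T_{i+2}\ge wT_i$ (Figure~\ref{fig:TiT1}) rather than an unsubstantiated ``rational check.'' The remaining steps (monotonicity, continuity, identifying $\nu_c$ and $\rho_{\nu_c}$, transfer) are essentially as in the paper.
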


\begin{figure}[htb]
  \centering
  \includegraphics[width=60mm]{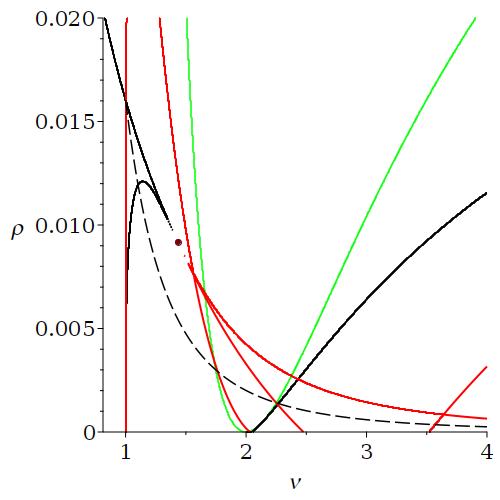} 
  \hskip 10mm   \includegraphics[width=60mm]{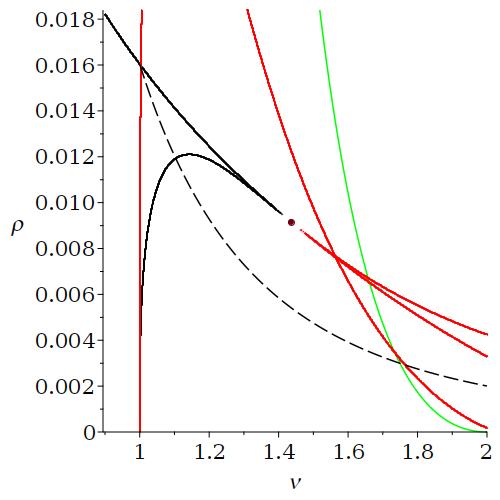} 
  \caption{The branches of $\Delta_0$ (green/light), $\Delta_1$ (black) and $\Delta_2$ (red). The black dashed curve is the lower bound $\rho_1/\nu^3$ on the radius, for $\nu\ge 1$. The plot on the right  zooms on the interval $[1,2]$. The radius $\rho_\nu$ first follows the top black branch, between $\nu=0$ and $\nu_c\simeq 1.44$, and then the top red branch.}
  \label{fig:radius}  
\end{figure}

\noindent

\noindent{\bf Remarks}\\
1. The above result can be compared to the analogous result for the Ising model on triangulations (the  Potts model with $2$ colours only), where the critical value of $\nu$ is at $1+1/\sqrt 7$, with exponent~$4/3$ rather than $6/5$ in the singular expansion of the series at criticality;
see~\cite[Claim~24]{BeBM-11} or~\cite[Thm.~2.4]{albenque-Ising}.\\
2. The exponent $-11/5$ occurring in~\eqref{asympt} is in agreement with the prediction given by the Knizhnik--Polyakov--Zamolodchikov (KPZ) formula~\cite{KPZ}: the $3$-state Potts model having \emm central charge, $c=4/5$ (see, e.g.,~\cite[Eq.~(4.22)]{Jacobsen-saleur-Potts}), the KPZ formula gives, at criticality, an asymptotic estimate  for the  Potts-weighted number of (rooted) maps of size $n$, of the form
\[
  \kappa \mu^n n^{\gamma-2},
\]
where the \emm string susceptibility exponent, $\gamma$ is
\[
  \gamma= \frac 1 {12}\left( c-1-\sqrt{(1-c)(25-c)}\right) = -\frac 1 5,
\]
so that $\gamma-2=-11/5$ indeed. For the Ising model, $c=1/2$ hence $\gamma=-1/3$, in agreement with the exponent $-1-4/3=-7/3$ found in the number of Ising-weighted maps of size $n$.  

\medskip
The details of the proof require some care, which makes the proof  long. We have thus split it into several shorter lemmas.
As many parts of this paper, this proof requires using a computer algebra system. Our \Maple\ session is available on our web pages. Inside \Maple, we use the packages {\tt algcurves, plots}, {\tt gfun}~\cite{gfun} and {\tt DA}~\cite{DiffApprox}. We also use  {\tt msolve}, a C library for solving in arbitrary precision multivariate polynomial systems~\cite{msolve}.

\begin{Lemma}[\bf The case $\boldsymbol{\nu=0}$]
  \label{lem:0}
  Proposition~\ref{prop:asympt} holds true for $\nu=0$: all series $T_i$ with $i>1$ have radius of convergence $\rho_0=1/8$. This is their unique
  singularity, and they all have a map-like singular expansion of the form~\eqref{sing_generic} near $\rho_0$.

  For $\nu>0$, the radius of convergence of $T_1$ satisfies $\rho_\nu \le \rho_0=1/8$.
\end{Lemma}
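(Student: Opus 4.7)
The lemma has two assertions: the singularity structure of $T_i(0, w)$ for $i > 1$, and the bound $\rho_\nu \le 1/8$ for $\nu > 0$. The plan is to establish the first assertion for $i = 2$ using the quadratic equation given in the excerpt, extend it to $i > 2$ via the algebraic framework of Sections~\ref{sec:T1}--\ref{sec:Ti}, and then deduce the second assertion as a short consequence.

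For $T_2$ at $\nu = 0$: the quadratic $2\dot{T}_2^{\,2} - (4w+3)\dot{T}_2 + 2w(w+6) = 0$ has discriminant $9(1-8w)$ in $\dot{T}_2$, so the branch with $\dot{T}_2(0) = 0$ reads
\[
    \dot{T}_2 = \frac{4w + 3 - 3\sqrt{1-8w}}{4} \in \qs(w)\bigl[\sqrt{1-8w}\bigr].
\]
Integrating (using $T_2(0, 0) = 0$) gives $T_2(0, w) \in \qs(w)[\sqrt{1-8w}]$, so the unique singularity on the circle $|w| = 1/8$ is the branch point at $w = 1/8$, and the $(1-8w)^{3/2}$ term produced by the integration matches~\eqref{sing_generic} with $\gamma \neq 0$.

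For $T_i(0, w)$ with $i > 2$: I would invoke the structural results of Sections~\ref{sec:T1}--\ref{sec:Ti}. Each $T_i$ with $3 \le i \le 7$ lies in $\qs(\nu, \DT, \sqrt{\Delta})$, and each $T_i$ with $i \ge 8$ is a polynomial in $w, T_1, T_3, T_5, T_7$ over $\qs(\nu)$. Specialising to $\nu = 0$ and checking that the substitution is well-defined, one should obtain $T_i(0, w) \in \qs(w)[\sqrt{1-8w}]$, hence the same unique singularity and map-like behaviour as $T_2(0, w)$. For the bound $\rho_\nu \le 1/8$ when $\nu > 0$: the identity $\nu T_2 = T_1$ from~\eqref{T24expr} shows that $T_1$ and $T_2$ share the same radius for every $\nu > 0$, and since $[w^n] T_2(\nu, w)$ is a polynomial in $\nu$ with non-negative integer coefficients (by the combinatorial definition~\eqref{T-def}), we have $[w^n] T_2(\nu, w) \ge [w^n] T_2(0, w) \sim c \cdot 8^n n^{-5/2}$, yielding $\rho_\nu \le 1/8$.

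The main obstacle will be verifying that the $\nu \to 0$ specialisation of the algebraic expressions from Sections~\ref{sec:T1}--\ref{sec:Ti} is well-defined: at $\nu = 0$ the minimal polynomial~\eqref{alg:dT1} degenerates to $-2\DT^{\,2}(3\DT - 1)^2 = 0$ (consistent with $T_1 = 0$), so the closed-form formulas for the higher $T_i$ may involve removable singularities requiring careful resolution. An alternative, more combinatorial route bypassing these subtleties is to note that at $\nu = 0$ the only 3-colourable triangulations are the Eulerian ones (each with exactly $6$ proper colourings, so $T_i(0, w) = 2 E_i(w)$ for the generating function $E_i$ of Eulerian near-triangulations of outer degree $i$) and to invoke Tutte's classical results on bicubic maps via duality, from which the radius $1/8$, the uniqueness of the singularity, and the exponent $3/2$ all follow by standard singularity analysis of algebraic series.
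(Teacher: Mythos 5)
Your treatment of $T_2$ at $\nu=0$ is correct and essentially equivalent to the paper's: you solve the quadratic for $\dot T_2$ and integrate, while the paper writes the quadratic for $T_2$ directly and solves it, but both yield the same explicit formula $T_2(0,w)=\frac{1}{16}\bigl((1-8w)^{3/2}-1+12w+8w^2\bigr)$ with the required conclusions. Your proof of the final bound $\rho_\nu\le 1/8$ is also correct (it is a coefficientwise comparison $T_1(\nu,\cdot)=\nu T_2(\nu,\cdot)\ge\nu T_2(0,\cdot)$, just as in the paper).

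The gap is the case $i>2$. You yourself flag the right obstacle: the algebraic framework of Sections~\ref{sec:T1}--\ref{sec:Ti} expresses $T_i$ in $\qs(\nu,\DT,\sqrt{\Delta})$, but this is built on the minimal polynomial~\eqref{alg:dT1}, which degenerates at $\nu=0$ (where $T_1=0$), so simply ``specialising to $\nu=0$ and checking that the substitution is well-defined'' is not an argument --- it is exactly the thing that needs doing, and you do not do it. The paper's actual route avoids this altogether: starting from the one-catalytic equation~\eqref{eqinv}, it substitutes $T_1=\nu T_2$, divides out a factor $\nu$, sets $\nu=0$ and $T_3=T_2-2w^2$, and from the resulting equation proves by induction that every $T_i(0,\cdot)$ for $i\ge 3$ lies in $\qs[w]+T_2\,\qs[w]$. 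Your proposed alternative (Eulerian triangulations, $T_i(0,w)=2E_i(w)$, Tutte's bicubic result) is only a pointer, not a proof: Tutte's result gives the asymptotics of one sequence, not the singular structure of $T_i$ for every outer degree $i$, so you would still need to carry out an argument parametrised by $i$.

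Even granting membership in $\qs[w]+T_2\,\qs[w]$, there is a second step you omit entirely: ruling out that $T_i(0,\cdot)$ is a polynomial in $w$, and ruling out that the exponent is $3/2+k$ for some integer $k\ge1$ (which is a priori possible for an element of $\qs[w]+T_2\,\qs[w]$). The paper does this with the combinatorial inequality $T_{i+2}\ge w\,T_i$ (obtained by adding a dangling edge in the outer face), which gives $T_{2i}\ge w^{i-1}T_2$ and $T_{2i+1}\ge w^{i-1}T_3$ and hence forces $k=0$. Without a comparable lower bound, the claim that all $T_i$ have a map-like $(1-8w)^{3/2}$ singularity is unproved.
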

\begin{proof}
  When $\nu=0$ we have $T_1=0$, so we first focus on the series $T_2$ that counts properly $3$-coloured
  near-triangulations of outer degree $2$. Since in general $T_2=T_1/\nu$,   we can derive the minimal polynomial of $T_2$ from that of $T_1$, computed in Section~\ref{sec:T1}. When $\nu=0$ we find that~$T_2$ is quadratic only:
  \beq\label{min:T2nu0}
    8 T_2^{2}-\left(8 w^{2}+12 w -1\right) T_2 +2 w^{2} \left(w^{2}+11 w -1\right) =0   .
  \eeq
 This gives:
  \[
    T_2(0,w)=\frac 1 {16}\left( (1-8w)^{3/2}-1+12w+8w^2\right),
  \]
with  radius $\rho_0=1/8$.  There is no other singularity. Near $w=\rho_0$ the singular behaviour of $T_2$ is of the map-type~\eqref{sing_generic}. 

  Let us now discuss the series $T_i$ for $i>2$, still with $\nu=0$. First, by deleting the root edge in a near-triangulation of outer degree $2$, we obtain that $T_2=2 w^2+T_3$ at $\nu=0$. In particular, $T_3$ has the same singularity and the same singular behaviour as $T_2$. At this point we have reached Tutte's classical result on \emm bicubic maps,, as discussed in the introduction; see~\cite[p.~269]{tutte-census-maps}. We then return to the 1-catalytic equation~\eqref{eqinv}, where we replace $T_1$ by $\nu T_2$:
  a factor $\nu$ comes out. After dividing by $\nu$, we set $\nu=0$ and $T_3=T_2-2w^2$. This gives the following equation:
  \begin{multline*}
    4 y^{5}T(y)^{3} -y^{2} \left(8 y^{2}+10 y -1\right) T(y)^{2}
    +2\left(3 w \,y^{3}+2 y^{2}+y -1\right) T(y)\\
    +2  \,y^{2} T_2  -w^{2} y^{2}
    + 2(y + 1)(1-2y )w
    =0.
  \end{multline*}
  Let us now define $S(y)$ by $T(y)=w+y^2T_2+y^3 S(y)$. Then the above equation yields
  \[
    S(y)= T_2-2w^2+ y \Pol(w, T_2, y),
  \]
  for some polynomial $\Pol$ with coefficients in $\qs$. This gives by induction on $i\ge 3$ an expression of $T_i=[y^{i-3}]S(y)$ as a polynomial of $\qs[w,T_2]$. More precisely, in sight of~\eqref{min:T2nu0}, $T_i$ belongs to $\qs[w] + T_2 \qs[w]$. In particular, either $T_i$ is a polynomial in $w$, or it is an algebraic (quadratic) function of $w$ with a unique singularity at $\rho_0=1/8$.
  It is easy to see, by adding a dangling edge in the outer face of a near-triangulation, that $T_{i+2}\ge w T_i$, coefficientwise. Hence $T_{2i} \ge w^{i-1}T_2$ and $T_{2i+1}\ge w^{i-1}T_3$. Given that neither $T_2$ nor $T_3$ is a polynomial in $w$, this proves that none of the $T_i$, for $i\ge 2$, is a polynomial in $w$. Since $T_i$ is a polynomial in $w$ and $T_2$, its singular behaviour near $w=1/8$ is in $(1-8w)^{3/2+k}$ for $k$ a non-negative integer. This implies that the coefficient of $w^n$ in $T_i$ grows like $8^n n^{-5/2-k}$. However, the above lower bounds on $T_{2i}$ and $T_{2i+1}$ then imply that $k=0$, so that $T_i$ has  a map-type singularity for any $i\ge 2$.

  Finally, we note that $T_1(\nu,w) \ge \nu T_2(0,w)$ coefficientwise, so that $\rho_\nu\le \rho_0=1/8$ for $\nu>0$.
\end{proof}

We next focus on the series $T_1$, and will return to the series $T_i$ for $i>1$ in Lemma~\ref{lem:Ti}. 
We refer to~\cite[Chap.~VII.7]{flajolet-sedgewick} for generalities on  singularities of algebraic series. In particular, given an annihilating polynomial of a series $F(w)$, say $\Pol(F(w))=0$ with coefficients in $\qs[w]$, all singularities of $F$ are found among the roots of the leading coefficient and of the discriminant of $\Pol$. Also, for series with non-negative coefficients, the radius of convergence is one of the singularities (Pringsheim's theorem).

\begin{Lemma}[\bf The case $\boldsymbol{\nu=1}$]
  \label{lem:1}
  Proposition~\ref{prop:asympt} holds true for $\nu=1$ and $i=1$: the series $T_1$ has radius of convergence $\rho_1=\sqrt3/108$, which is a root of $\Delta_1(1, \cdot)$. This is the unique  singularity, and $T_1$ has a map-like singular expansion of the form~\eqref{sing_generic} near $\rho_1$.

  For $\nu\ge 1$, the radius of convergence  of $T_1$ satisfies $\rho_\nu \ge \rho_1/\nu^3$.
\end{Lemma}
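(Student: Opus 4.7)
The plan is to exploit the factored form of~\eqref{alg:dT1} at $\nu=1$, namely the cubic
\[
 2\DT^{3}-3\DT^{2}+\DT -6w=0,
\]
recalled in the introduction as the vanishing factor. Since the polynomial $P(x,w) := 2x^3-3x^2+x-6w$ satisfies $P(0,0)=0$ and $\partial_x P(0,0)=1 \neq 0$, the implicit function theorem singles out a unique analytic branch through the origin, which must be $\DT(1,w)=\partial_w T_1(1,w)$. The singularity analysis of $T_1(1,\cdot)$ then reduces to that of this branch.

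To locate candidate singularities, I would compute $\mathrm{disc}_x P(x,w) = 1-3888\,w^2$, whose real roots are $w=\pm\sqrt{3}/108$. At each such value, two branches of $P(\cdot,w)=0$ merge: at $w=\sqrt{3}/108$ they merge at $x=\tfrac12-\tfrac{\sqrt3}{6}$, and at $w=-\sqrt{3}/108$ they merge at $x=\tfrac12+\tfrac{\sqrt3}{6}$. Since the branch originating from $(0,0)$ evolves according to $\dot x=6/(6x^2-6x+1)$, which remains positive and finite while $x$ stays below $\tfrac12-\tfrac{\sqrt3}{6}$, one sees that our branch reaches the first merging point exactly at $w=\sqrt{3}/108$, while at $w=-\sqrt{3}/108$ our branch attains the \emph{simple} root $x=\tfrac12-\tfrac{\sqrt3}{3}$, which is an analytic point. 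By Pringsheim's theorem (applicable since $T_1$ has non-negative coefficients in $w$), the radius of convergence must be an actual singularity, so $\rho_1=\sqrt3/108$, and this is the only singularity on the circle $|w|=\rho_1$. A direct substitution into $\Delta_1(1,\rho)$ from Appendix~\ref{sec:app-triang} confirms that $\rho_1$ is indeed one of its roots.

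The singular expansion follows from the quadratic nature of the branching at $w=\rho_1$. Expanding $P(x,w)=0$ around the double root gives
\[
 \DT(1,w)=\DT(1,\rho_1)+c\sqrt{1-w/\rho_1}+O(1-w/\rho_1),
\]
with $c$ explicit and nonzero (computable from $\partial_x^2 P$ and $\partial_w P$ at the critical point). Integrating with respect to $w$ yields~\eqref{sing_generic} for $T_1$ with $\gamma_{1,1}\ne 0$, as required.

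For the lower bound $\rho_\nu \ge \rho_1/\nu^3$ when $\nu\ge 1$, I would argue coefficientwise. By~\eqref{fe-v}, every near-triangulation $M$ of outer degree $1$ with $v=\vv(M)$ vertices has exactly $3v-4$ edges; hence for any $3$-colouring $c$ we have $m(c)\le 3v-4$ and, for $\nu\ge 1$, $\nu^{m(c)}\le \nu^{3v-4}$. Summing over colourings gives $\Ppol_M(3,\nu)\le 3^{v}\nu^{3v-4}$, so
\[
 T_1(\nu,w) \le \nu^{-4}\cdot\tfrac13\sum_M (3\nu^3 w)^{v(M)} = \nu^{-4}\, T_1(1,\nu^3 w)
\]
coefficientwise. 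Since $T_1(1,\cdot)$ has radius $\rho_1$, the right-hand side converges for $|w|<\rho_1/\nu^3$, which yields the claim. The main technical care lies in step two, the branch tracing needed to discard $-\sqrt3/108$ as a singularity of our branch; everything else is a routine cubic-discriminant and local expansion computation.
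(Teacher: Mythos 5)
Your proof is correct and reaches all the required conclusions, and it follows essentially the same strategy as the paper's: locate candidate singularities via the discriminant of a degree-$3$ minimal polynomial, exclude the spurious root $-\rho_1$, do a local expansion at $\rho_1$, and prove the coefficientwise bound $T_1(\nu,w)\le\nu^{-4}T_1(1,\nu^3 w)$. Two choices, however, make your version a little cleaner and more self-contained. First, the paper works with the factored cubic for $T_1$ itself at $\nu=1$, namely $192T_1^3-(288w-1)T_1^2+w(90w-1)T_1-3w^3(81w-1)=0$, while you exploit the much simpler cubic $2\DT^3-3\DT^2+\DT-6w=0$ for the derivative and then integrate at the end; the computations (discriminant $1-3888w^2$, critical values $x=\tfrac12\pm\tfrac{\sqrt3}{6}$, etc.) are lighter this way. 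Second, to show that $-\rho_1$ is not a singularity of the combinatorial branch, the paper appeals to a \Maple\ plot, whereas you give a hand-verifiable branch-tracing argument along the real axis using $\dot x=6/(6x^2-6x+1)$ and the observation that the branch through $(0,0)$ lands on the simple root $\tfrac12-\tfrac{\sqrt3}{3}$ at $w=-\rho_1$. That replacement of a visual check by an elementary monotonicity argument is the one genuine upgrade; everything else mirrors the paper. The lower bound $\rho_\nu\ge\rho_1/\nu^3$ for $\nu\ge1$ is proved identically.
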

\begin{proof}
  As recalled in the introduction, this case is simple, and equivalent to the classical enumeration of near-triangulations counted by vertices (with a weight $3w$ per vertex)~\cite{mullin-nemeth-schellenberg}.   When $\nu=1$  the minimal polynomial of $T_1$, determined in Section~\ref{sec:T1}, factors. Choosing the correct factor yields:  
  \[
    192 T_1^{3}-\left(288 w -1\right) T_1^{2}+w \left(90 w -1\right) T_1 -3 w^{3} \left(81 w -1\right)=0  .
  \]
  The leading coefficient is non-zero, and the discriminant has only two non-zero roots $\rho_1=\sqrt3/108$ and $-\rho_1$, so that $\rho_1$ is the radius of convergence.  A plot of $T_1$ as a function of $w$ shows that~$-\rho_1$ is not a singularity of $T_1$ (but of its two conjugates). Moreover, a local expansion of $T_1$ near~$\rho_1$ yields an expansion of the planar map type~\eqref{sing_generic} (this expansion can be computed using for instance the {\tt algeqtoseries} command in the {\tt gfun} package of \Maple).

  We can now derive
  a lower bound on the radius of $T_1$ for $\nu\ge1$: since a near-triangulation of outer degree $1$ having $n$ vertices has $3n-4$ edges, each of them getting a weight at most $\nu$, we have $T_1(\nu,w) \le \nu^{-4}T_1(1, w\nu^3)$ coefficientwise, which implies $\rho_\nu \ge \rho_1/\nu^3$
  for $\nu\ge1$.
\end{proof}

We now return to general values of $\nu$.
We want to determine  the dominant singularities of $T_1$, and its  behaviour near these values.
It is sufficient to study the singular behaviour of $\DT=\partial_w T_1$, which as a simpler minimal polynomial~\eqref{alg:dT1}, and then integrate.

\begin{Lemma}[\bf Locating possible singularities]
  \label{lem:Delta}
  Take $\nu>0$.
  Let $\Delta_1$ and $\Delta_2$ be defined as in Proposition~\ref{prop:asympt}. Moreover, let $\Delta_0= 16 \nu  \left(\nu -1\right) w -\left(\nu -2\right)^{2}$. Then any singularity of $T_1$ is a root of $\Delta_0 \Delta_1 \Delta_2$.  
\end{Lemma}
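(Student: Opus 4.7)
The plan is to reduce the question to the singularities of $\DT=\partial_w T_1$. Since $T_1$ has no constant term, we have $T_1(w)=\int_0^w\DT(u)\,du$, so every singularity of $T_1$ is a singularity of $\DT$, and it therefore suffices to prove the claim for $\DT$. The advantage of this reduction is that $\DT$ admits the compact minimal polynomial~\eqref{alg:dT1}, which has degree $11$ in $\DT$ but only degree $2$ in $w$, and is sparse enough to be handled by a computer algebra system.

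Next I would invoke the classical criterion~\cite[Ch.~VII.7]{flajolet-sedgewick}: the singularities of an algebraic power series $F(w)$ annihilated by an irreducible polynomial $P(w,F)\in\qs(\nu)[w,F]$ are contained in the zero set (as polynomials in $w$) of the leading coefficient of $P$ in $F$ and of the discriminant of $P$ with respect to $F$. In~\eqref{alg:dT1}, the leading coefficient in $\DT$ is the constant $276480\,\nu^7$, which for $\nu>0$ does not vanish; so it contributes no singularity. It remains to examine the discriminant $\Delta(\nu,w):=\operatorname{disc}_{\DT}P(w,\DT)$, a polynomial in $\qs[\nu,w]$ whose roots in $w$ contain all singularities of $\DT$.

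The final step is computational: using \Maple, I compute $\Delta(\nu,w)$ and factor it over $\qs[\nu,w]$. The claim is that, up to a nonzero rational constant and a positive power of $\nu$ (which does not vanish for $\nu>0$), the discriminant splits as
\[
\Delta(\nu,w) = \nu^{a}\,\Delta_0(\nu,w)^{a_0}\,\Delta_1(\nu,w)^{a_1}\,\Delta_2(\nu,w)^{a_2}\,\Phi(\nu),
\]
where $a,a_0,a_1,a_2$ are nonnegative integers and $\Phi(\nu)\in\qs[\nu]$ has no positive real root (hence never vanishes for $\nu>0$ and in particular is $w$-independent). Once this factorization is verified, the zeros of $\Delta(\nu,\cdot)$ in $w$ coincide with the zeros of $\Delta_0\Delta_1\Delta_2$, and the lemma follows.

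The main obstacle is the size and organization of the discriminant computation: $P$ has degree $11$ in $\DT$ with coefficients of nontrivial total degree in $(\nu,w)$, so $\Delta$ is a sizeable bivariate polynomial. One must factor it carefully, match the factors against the explicit $\Delta_0,\Delta_1,\Delta_2$ defined in Appendix~\ref{sec:app-triang}, and confirm that all remaining factors depend only on $\nu$ and are sign-definite for $\nu>0$ (for instance by checking that their real roots are negative or by examining numerical evaluations). This is exactly the kind of step for which the accompanying \Maple\ session is designed, and where the reduction to $\DT$ rather than $T_1$ (whose minimal polynomial has many more terms, cf.\ Section~\ref{sec:T1}) makes the factorization tractable.
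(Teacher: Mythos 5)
Your overall strategy (reduce to the simpler minimal polynomial of $\DT$ via the fact that a singularity of $T_1$ must also be one of $\DT=\partial_w T_1$, then inspect leading coefficient and discriminant) is sound and is exactly the paper's starting point. But the conjectured factorization of the discriminant is wrong, and the proof has a genuine gap there. When one actually factors the discriminant of~\eqref{alg:dT1} with respect to $\DT$, one finds
\[
\kappa\, \nu^{45}(\nu-1)^{18}\,\Delta_0^2\,\Delta_1\,\Delta_2\,\Delta_3^2,
\]
where $\Delta_3$ is an irreducible polynomial in $\nu$ \emph{and} $w$ of degree $5$ in~$w$. In other words, there is an extra $w$-dependent factor, not a $w$-free factor $\Phi(\nu)$ as you asserted. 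So the criterion from~\cite[Ch.~VII.7]{flajolet-sedgewick} applied to $\DT$ alone only shows that the singularities of $\DT$ (hence those of $T_1$) lie among the roots of $\Delta_0\Delta_1\Delta_2\Delta_3$, and you have no argument to remove $\Delta_3$.

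The paper closes the gap with an additional step you are missing: it also applies the same criterion to the minimal polynomial of $T_1$ itself (degree $13$ in $w$, computed in Section~\ref{sec:T1}). Its leading coefficient and discriminant do \emph{not} contain the factor $\Delta_3$, so a root of $\Delta_3$ that is not also a root of $\Delta_0\Delta_1\Delta_2$ cannot be a singularity of $T_1$. Intersecting the two lists of candidates eliminates $\Delta_3$ and yields the stated conclusion. You would need to add this second computation (or some other argument excluding $\Delta_3$) to make the proof complete. A minor side issue: your argument silently assumes the discriminant is not identically zero in $w$; the factor $(\nu-1)^{18}$ means the case $\nu=1$ must be treated separately (the paper appeals to Lemma~\ref{lem:1} for that).
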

\begin{proof}
  For   $\nu=1$, we refer to Lemma~\ref{lem:1}.

  We now assume $\nu \neq 0, 1$ and return to  the minimal polynomial~\eqref{alg:dT1} of $\DT$, say $\Pol(\nu,w, z)$, of degree $11$ in $z$. Its leading coefficient $\nu^7$ does not vanish, so all singularities of $\DT$ will be found among the roots of the discriminant of $\Pol$ with respect to $z$. This discriminant reads:
  \[
    \kappa\, \nu^{45}(\nu-1)^{18} \Delta_0^2\Delta_1\Delta_2\Delta_3^2,
  \]
  where $\kappa\in \zs$, $\Delta_0$,
  $\Delta_1$ and $\Delta_2$ are as above,
  and  $\Delta_3$ is another polynomial in $\nu$ and $w$, of degree~$5$ in $w$.
  We  examine similarly the leading coefficient and the discriminant of the minimal polynomial of $T_1$, and observe that they do not contain the factor $\Delta_3$. Hence the roots of $\Delta_3$ cannot be singularities of $T_1$ (unless they are also roots of $\Delta_0\Delta_1\Delta_2$).
\end{proof}

\begin{Lemma}[\bf The radius of convergence]
  \label{lem:radius}
  Let $\nu> 0$.  The radius of convergence of $T_1$, denoted  $\rho_\nu$, is a continuous non-increasing function of $\nu$, whose value is given by Proposition~\ref{prop:asympt}.
\end{Lemma}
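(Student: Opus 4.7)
The plan is to combine three ingredients: (i) non-negativity/monotonicity of the coefficients of $T_1$ in $\nu$, (ii) the finite list of algebraic candidates for singularities provided by Lemma~\ref{lem:Delta}, and (iii) the two bounds $\rho_\nu\le 1/8$ and $\rho_\nu\ge \rho_1/\nu^3$ (for $\nu\ge 1$) coming from Lemmas~\ref{lem:0} and~\ref{lem:1}, together with Pringsheim's theorem. First I would establish monotonicity. Since $T_1(\nu,w)$ counts near-triangulations of outer degree $1$ weighted by $\nu^{m(c)}$ summed over $3$-colourings $c$, the coefficient $[w^n]T_1$ is a polynomial in $\nu$ with non-negative coefficients. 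Hence $\nu\mapsto [w^n]T_1(\nu,w)$ is non-decreasing on $[0,\infty)$, and the Cauchy--Hadamard formula immediately gives $\nu\mapsto \rho_\nu$ non-increasing. By Pringsheim's theorem, $\rho_\nu$ itself is a singularity of $T_1$, and by Lemma~\ref{lem:Delta} it is therefore a positive real root of $\Delta_0(\nu,\cdot)\,\Delta_1(\nu,\cdot)\,\Delta_2(\nu,\cdot)$.

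Next I would identify the correct branch. Continuity of $\rho_\nu$ for $\nu\in(0,\infty)$ follows from the general fact that the roots of a polynomial depend continuously on its coefficients: as long as the candidate radius is a simple root of the appropriate factor among $\Delta_0,\Delta_1,\Delta_2$ and does not collide with another positive real root, it varies continuously with $\nu$, and the two bounds $\rho_\nu\le 1/8$ and $\rho_\nu\ge \rho_1/\nu^3$ prevent $\rho_\nu$ from escaping to $0$ or $\infty$. For $\nu$ small, Lemma~\ref{lem:0} gives $\rho_0=1/8$; plugging this into the three polynomials confirms (by a direct numerical check) that $(\nu,\rho)=(0,1/8)$ lies on $\Delta_1=0$ and not on $\Delta_0=0$ or $\Delta_2=0$, and that the branch of $\Delta_1$ through this point is locally unique. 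By continuity and the non-increasing property, $\rho_\nu$ must follow this particular branch of $\Delta_1=0$ as long as no collision with another positive real singularity occurs.

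To locate the transition value $\nu_c$, I would compute $\mathrm{Res}_\rho(\Delta_1,\Delta_2)$ (a polynomial in $\nu$), identify its real roots greater than $0$, and check that the unique one in the range where the $\Delta_1$-branch used above meets the upper $\Delta_2$-branch is $\nu_c=1+3/\sqrt{47}$, yielding the exact value $\rho_{\nu_c}=(1295\sqrt{47}-7875)/109744$ given in~\eqref{rhoc}. One also needs to verify that $\Delta_0$ is irrelevant throughout: the unique positive root of $\Delta_0(\nu,\cdot)$, namely $w=(\nu-2)^2/(16\nu(\nu-1))$, can be shown by direct inspection to be either negative or strictly larger than the $\Delta_1$-branch for $\nu\in(0,\nu_c]$ and than the $\Delta_2$-branch for $\nu\ge \nu_c$, so it can never be the smallest positive singularity. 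Beyond $\nu_c$, the continuity argument forces $\rho_\nu$ to continue along the $\Delta_2$-branch starting at $\rho_{\nu_c}$; the ``highest of the two branches'' is pinned down by the value $\rho_{\nu_c}$ and by the bound $\rho_\nu\le 1/8$ on the one hand and $\rho_\nu\ge \rho_1/\nu^3$ on the other, which together exclude the lower branch.

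The hard part will be the last step: rigorously certifying, for the whole interval $(0,\infty)$, that the chosen branches of $\Delta_1$ and $\Delta_2$ are indeed real, simple, and do not cross other real roots of $\Delta_0\Delta_1\Delta_2$ except at $\nu_c$. In practice this reduces to a finite check: compute the discriminants of $\Delta_1(\nu,\rho)$ and $\Delta_2(\nu,\rho)$ with respect to $\rho$, together with the resultants $\mathrm{Res}_\rho(\Delta_i,\Delta_j)$ for $i\neq j$, enumerate the finitely many critical values of $\nu$ where branches could merge or change real-analytic status, and verify on each subinterval (using the numerical solver \texttt{msolve} and plots as in Figure~\ref{fig:radius}) that the branch selected above is the unique positive real root consistent with the bounds $\rho_1/\nu^3\le \rho_\nu\le 1/8$. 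This certifies continuity and monotonicity simultaneously, completes the identification of $\rho_\nu$ with the asserted branches, and hence proves the lemma.
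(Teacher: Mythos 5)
Your plan follows essentially the same route as the paper: establish monotonicity from positivity of the coefficient polynomials, locate candidate singularities via Lemma~\ref{lem:Delta}, track the correct branch using the bounds from Lemmas~\ref{lem:0} and~\ref{lem:1}, and certify the branch-collision structure with computational tools (\texttt{msolve}, resultants/discriminants). The paper anchors the branch at $(\nu,\rho)=(1,\rho_1)$ rather than at $\nu\to 0^+$, and handles $\Delta_0$ via the same global table of pairwise intersection points rather than the explicit root comparison you propose; these are inessential differences of bookkeeping.

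One step in your argument is, however, weaker than the paper's and deserves attention: your justification of continuity of $\rho_\nu$. You assert that $\rho_\nu$ is continuous because the candidate branch of $\Delta_0\Delta_1\Delta_2$ varies continuously with $\nu$ as long as it is a simple, non-colliding root. But this conflates two different things. Continuity of the \emph{branch} of the algebraic curve does not, by itself, imply continuity of the \emph{radius of convergence}: a priori, $\rho_\nu$ could jump between distinct branches of $\Delta$ at a value of $\nu$ where no collision occurs, because being a root of $\Delta$ is only a necessary condition for being an actual singularity of $T_1(\nu,\cdot)$. Worse, your argument explicitly excludes the colliding case, yet the transition at $\nu_c$ is exactly such a collision, so you cannot invoke your own argument there --- where continuity is most needed to conclude that $\rho_\nu$ passes through $\rho_{\nu_c}$ and continues on a $\Delta_2$-branch. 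The paper sidesteps all of this by invoking a prior, global fact: because $[w^n]T_1\in\ns[\nu]$ has degree bounded by $3n-4$, the radius $\rho_\nu$ is a continuous, non-increasing and log-convex function of $\nu$ on $(0,\infty)$ (a classical argument, cf.\ the reference cited there). With that a priori continuity in hand, the branch-following argument becomes a genuine deduction rather than a potentially circular one. You should replace your continuity paragraph with this a priori argument (or supply an independent proof of it); the rest of your plan then goes through as intended.
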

\begin{proof}
  The coefficient of $w^n$ in $T_1$ is a polynomial of $\ns[\nu]$ of degree at most $3n-4$ (this is the number of edges in a map of $\mathcal T_1$ having $n$ vertices). By classical arguments, $\rho_\nu$ is a  continuous, non-increasing, log-convex function of $\nu$ on $(0, +\infty)$ (see for instance~\cite{HTW82} for a proof in a different context). The graph of this function is obtained by gluing parts of branches of the above polynomials $\Delta_i$, for $0\le i\le 2$.
  We refer to Figure~\ref{fig:radius} for plots of real positive branches.
  In what follows, we often replace arguments based on estimates of branches of algebraic functions at some point in controlled precision, as should be done (see~\cite[Chap.~VII.7]{flajolet-sedgewick}, \cite{chabaud}), by discussions on \Maple\  plots of algebraic curves. We hope that our readers will find them convincing enough.

  The only branch of $\Delta:=\Delta_0 \Delta_1\Delta_2$ containing the point $(\nu,w)=(1,\rho_1)=(1, \sqrt3/108)$  is the branch of $\Delta_1$ that decreases from $\rho_0=1/8$ to $\rho_1$ as $\nu$ increases from $0$ to $1$ (Figure~\ref{fig:radius}). Let us denote this branch by $\mathcal B_1$.
    We will prove that on the interval $[0,1]$, this branch does not meet any other branch of $\Delta$ (the intersection with the red branch will be shown to occur for $\nu>1$).
  An intersection point $(\nu,w) \in \mathcal B_1$ would cancel $\Delta_1$ of course, and  either $\Delta_0 \Delta_2$, or $\partial_w \Delta_1$ (if two branches of $\Delta_1$ meet at this point). We confirm (rigorously) with {\tt msolve} that no such point exists with $\nu\in(0,1]$ and $w\in [\rho_1, 1/8]$.
  By continuity, we conclude that $\mathcal B_1$ gives the radius of convergence for $\nu\in (0,1]$.

  \medskip
  \paragraph{\bf Following branches}
  We have just proved that the radius follows the branch $\mathcal B_1$ between $\nu=0$ and $\nu=1$ (the \emm antiferromagnetic phase,, in physics terms). This goes on in a neighbourhood of $\nu=1$  as $\nu$ increases. Later the radius may follow another branch $\mathcal B_2$ of $\Delta$ at a point where  $\mathcal B_1$ and  $\mathcal B_2$  meet.
  We are thus led to determine all points  $(\nu, w)$ where several positive branches of $\Delta$ meet. At these points, either one of the~$\Delta_i$'s, seen as a polynomial in $w$, has a multiple root, or two of the $\Delta_i$'s vanish. We determine controlled approximations of these points using {\tt msolve}.
  We naturally restrict our attention to values  of  $(\nu,w)$ such that $\nu \ge 1$ and $w\in [0, \rho_1]$. Moreover, in Table~\ref{tab:intersect} we have also excluded values such that $w<\rho_1/\nu^3$ (Lemma~\ref{lem:1}). The $10$ points listed on this table can be seen on the plots of Figure~\ref{fig:radius}.

  \begin{table}[htb]
    \centering

    \begin{tabular}[h]{c|c|c|c}
      $i\backslash j$  & 0 & 1 & 2
      \\  \hline
      \multirow{3}{1em}{0}
                       & \hskip 30mm \ &  & $(1.6577,0.0067)$ \\
                       & $\emptyset$ & $\emptyset$ & $(1.6614,0.0065)$\\
                       & & & $(1.7342,0.0034)$\\
                       & & & $(2.3765,0.0027)$
      \\ \hline
      \multirow{3}{1em}{1}  & &           & $(1.0035,0.0159)$ \\
                       & & $(\nu_c, \rho_{\nu_c})\simeq(1.4375,0.0091)$ & $(\nu_c, \rho_{\nu_c})$\\
                       & & & $(2.4319,0.0025)$ \\
      \hline
      \multirow{4}{1em}{2}  & & &$(\nu_c, \rho_{\nu_c})$ \\
                       & & & $(1.5629,0.0076)$\\
                       & & & $(1.5653,0.0075)$\\
                       & & & $(3.6393,0.0008)$\\
      \hline
    \end{tabular}
    \vskip 2mm
    \caption{Intersection points $(\nu,w)$ of branches of $\Delta_i$ and $\Delta_j$, for $0 \le i\le j \le 2$. }
    \label{tab:intersect}
  \end{table}

  Based on this inspection of intersection points of branches, we now return to the radius of convergence of $T_1$. 
  As~$\nu$ increases away from $1$, the first branch that $\mathcal B_1$ meets is a locally increasing branch of $\Delta_2$, at $\nu\simeq 1.0035$. We ignore it because the radius is non-increasing.
  The next intersection is at 
  the value $\nu_c:=1+3/\sqrt{47}$ introduced in the statement of the proposition. There the value of $\mathcal B_1$ is found to be the number $\rho_{\nu_c}$ given by~\eqref{rhoc}. At the point $(\nu_c, \rho_{\nu_c})$, four (real) branches of $\Delta$ meet: two branches of $\Delta_1$ (in black in Figure~\ref{fig:radius}), namely   $\mathcal B_1$ and a lower branch that exists for $\nu<\nu_c$,
  and two branches of $\Delta_2$ (in red) that start at $\nu_c$ and proceed for $\nu>\nu_c$.
  One is higher than the other: a local expansion reveals that they differ by a sign in  the coefficient of $(1-w/\rho_{\nu_c})^{3/2}$. A similar statement holds for the two branches of $\Delta_1$ at $\nu_c$.
  The geometry of the branches then implies that $\rho_\nu$ is on the top branch of $\Delta_2$  above $\nu_c$ (Figure~\ref{fig:radius}): indeed, all non-increasing branches intersecting this branch for some $\nu\ge \nu_c$ reach the value $0$ at some point.
  This concludes the determination of $\rho_\nu$. We refer to our \Maple\ session for details.
\end{proof}

\begin{Lemma}[\bf Nature of the dominant singularity]
  \label{lem:sq-root}
  Let $\nu>0$. The singular behaviour of~$T_1$ near its radius $\rho_\nu$ is given by~\eqref{sing_generic} if $\nu\neq \nu_c$, and by~\eqref{sing_crit} if $\nu=\nu_c$.
\end{Lemma}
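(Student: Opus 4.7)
The plan is to work with $\DT = \partial_w T_1$, whose minimal polynomial~\eqref{alg:dT1} is much smaller than that of $T_1$, and recover the singular behavior of $T_1$ by term-wise integration of the Puiseux expansion of $\DT$ near $\rho_\nu$. In both regimes ($\nu\neq\nu_c$ and $\nu=\nu_c$), the strategy is to identify the local Puiseux expansion of $\DT$ at $w=\rho_\nu$ by Newton-polygon/branch-coalescence analysis, check that the dominant non-smooth term has a nonzero coefficient, and then integrate.

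\textbf{Regime $\nu\neq\nu_c$.} By Lemma~\ref{lem:radius}, the radius $\rho_\nu$ is a simple root of exactly one of $\Delta_1(\nu,\cdot)$ or $\Delta_2(\nu,\cdot)$ and of no other factor of the discriminant of~\eqref{alg:dT1} (excluding $\Delta_0$, which must be handled separately; we verify with \texttt{msolve} that for $\nu\in(0,\infty)$, any point of $\rho_\nu$ is, in fact, not also a root of $\Delta_0$ nor of $\Delta_3$). Since $\Delta_1$ and $\Delta_2$ each appear with multiplicity $1$ in the full discriminant, exactly two Puiseux branches of~\eqref{alg:dT1} collide at $w=\rho_\nu$. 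Writing $s:=1-w/\rho_\nu$ and running the Newton-polygon computation on the translated polynomial $P(\rho_\nu(1-s),\DT)$ then yields a pair of conjugate branches of the form $\DT=a+b\,s^{1/2}+O(s)$. We select the branch matching the combinatorial series $\DT$ (identified by its first few coefficients, computed inductively from~\eqref{eq:Q}) and verify $b\neq 0$; this is where we must confirm that the coalescing branches are genuine algebraic conjugates rather than a doubled branch with additional structure, which is ruled out by the simplicity of the root of the relevant $\Delta_i$. Term-by-term integration of $\DT$ in $w$ then gives~\eqref{sing_generic} with $\gamma_{1,\nu}$ a nonzero scalar multiple of $b$.

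\textbf{Regime $\nu=\nu_c$.} At $(\nu_c,\rho_{\nu_c})$, the analysis of Lemma~\ref{lem:radius} showed that four real branches of $\Delta=\Delta_0\Delta_1\Delta_2$ meet: two branches of $\Delta_1$ and two branches of $\Delta_2$. This forces a substantially more degenerate collision of Puiseux branches of~\eqref{alg:dT1} at $\rho_{\nu_c}$. Again I substitute $w=\rho_{\nu_c}(1-s)$ and $\DT=\DT(\rho_{\nu_c})+u$ in the minimal polynomial, and compute the Newton polygon in $(s,u)$. The expected edge of the polygon has slope $5/1$, giving a Puiseux expansion of the form
\begin{equation*}
\DT \;=\; a_0 + a_1\, s^{1/5} + a_2\, s^{2/5} + a_3\, s^{3/5} + a_4\, s^{4/5} + O(s),
\end{equation*}
with $a_1\neq 0$. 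I would verify this concretely using \texttt{algeqtoseries} from the \texttt{gfun} package, matching the branch against the series $\DT$ by its expansion at $w=0$. Integrating term by term then produces the $(1-w/\rho_{\nu_c})^{6/5}$ contribution in $T_1$, yielding~\eqref{sing_crit}.

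\textbf{Main obstacle.} The delicate step is the critical case: one must rule out the possibility that $a_1=0$ (which would degrade the exponent) or that additional cancellations produce a different leading exponent. Concretely, this requires showing that the Newton polygon at $(\nu_c,\rho_{\nu_c})$ has a single edge of slope $1/5$ with length $5$, i.e.\ that exactly five of the eleven branches of~\eqref{alg:dT1} coalesce at $\rho_{\nu_c}$ with this slope, and that the indicial polynomial attached to this edge has simple nonzero roots. Once the $5$-fold coalescence with slope $1/5$ is established (and we have identified which Puiseux branch matches the combinatorial $\DT$ through low-order coefficients), the exponent $6/5$ in~\eqref{sing_crit} follows, and nonvanishing of $\gamma_{\nu_c}$ follows from nonvanishing of $a_1$.
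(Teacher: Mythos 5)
Your framework is the same as the paper's: work with $\DT=\partial_w T_1$, run a Newton--polygon analysis at $w=\rho_\nu$, and integrate to recover the singular expansion of $T_1$. The critical case $\nu=\nu_c$ is handled essentially identically (coalescence of $5$ branches with a $(1-w/\rho_{\nu_c})^{1/5}$ expansion, verified with \texttt{algeqtoseries}), and for that single value the route is fine.

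The genuine gap is in the non-critical regime. You write that the square-root behaviour is ``ruled out by the simplicity of the root of the relevant $\Delta_i$,'' but this does not follow. That $\Delta_1$ (resp.\ $\Delta_2$) divides the discriminant of~\eqref{alg:dT1} to the first power is a statement over $\qs[\nu,w]$; it does not prevent the \emph{specialization} $\Delta_i(\nu,\cdot)$ from behaving degenerately at particular values of $\nu$, nor does it guarantee that the corner coefficients of the Newton polygon (which control whether the exponent really is $1/2$ and whether $b\neq0$) remain nonzero when $\nu$ is specialized. In the paper one forms the translated polynomial $\overline{\Pol}(Y,\eps)$ over $\qs[\nu]$, observes that generically only a slope $-1/2$ edge appears, and then explicitly lists the values of $\nu$ at which the corner coefficients vanish: $\nu=1$ and $\nu_b=1-3/\sqrt{47}$ for $\nu<\nu_c$, and seven more values (including a degree-$16$ algebraic number $\nu_{16}$) for $\nu>\nu_c$. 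Each of these must be disposed of individually; the $\nu_b$ case is settled by factoring $\overline{\Pol}$ and showing $\DT-\DT(\rho_\nu)$ does not satisfy the bad factor, while $\nu_{16}$ resists direct factorization and requires a separate combinatorial argument about the Newton polygon (a single slope-$(-1)$ edge of length $1$ corresponds to an analytic branch, which cannot be the singular series $\DT$). This exceptional-value analysis is the bulk of the proof, and your proposal does not address it.

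A secondary issue: you propose to identify the correct Puiseux branch of $\DT$ at $w=\rho_\nu$ ``by its first few coefficients, computed from~\eqref{eq:Q}'' at $w=0$. Those low-order coefficients determine $\DT$ near the origin, not which of the branches it follows at the boundary $w=\rho_\nu$. The paper instead establishes that $\DT(\nu,\rho_\nu)$ is a continuous function of $\nu$ (since $\DT$ has non-negative coefficients), tracks the real branch of the minimal polynomial~\eqref{cnu_min} of $c_\nu$ from $\nu\to 0^+$ up to $\nu_c$, and uses continuity to conclude $\DT(\nu,\rho_\nu)=c_\nu$ at every point, including non-generic ones. Some such argument is needed to pin down the branch before the Newton--polygon step can even begin.
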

\begin{proof}
   We use again the {\tt gfun} package in \Maple, and more precisely the {\tt algeqtoseries} command~\cite{gfun}, which implements the Newton polygon method and computes Puiseux expansions at a given point $w_0$ of all roots of a polynomial with coefficients in $\qs(w)$ (or $\qs(\nu,w)$). See for instance~\cite[Sec.~VII.7.1]{flajolet-sedgewick} or~\cite[Sec.~6.1]{stanley-vol2} for generalities on these expansions. We apply this procedure to the minimal polynomial of $\DT$, given by~\eqref{alg:dT1}.

  $\bullet$ Let us begin with the critical case  $\nu=\nu_c$. At this point we find $6$ roots with distinct constant terms, plus $5$ other roots that have the same constant term. The only real one expands as
  \[
    \frac{25}{38} - \frac{\sqrt{47}}{19}
    - \left(\frac 7{2^4 19^5}\left( 8555\sqrt{47}- 57585\right)\right)^{1/5}\left(1-w/\rho_{\nu_c}\right)^{1/5} + \LandauO\left((1-w/\rho_{\nu_c})^{2/5}\right),
  \]
  and the remaining $4$ are obtained by multiplying the second coefficient by a non-trivial fifth root of unity. This implies that the first $6$ roots are analytic at $\rho_{\nu_c}$, and that the above expansion is that of $\DT$ (because $\DT$ \emm is, singular at its radius). By integration, this yields~\eqref{sing_crit}.

  $\bullet$ We go on with $\nu<\nu_c$, in which case $\rho_\nu$ is a root of $\Delta_1$. In our analysis we first consider~$\nu$ as an indeterminate.
  We first need to  determine the possible values of $\DT$ at the point $w=\rho_\nu$. We take the minimal polynomial~\eqref{alg:dT1} of $\DT$, say $\Pol(\nu,w,z)$, specialize it at $w=\rho_\nu$ and factor it over $\qs(\nu, \rho_\nu)$.  We find two irreducible factors: the first one, say $\Pol_1(\nu,\rho_\nu,z)$, has degree $9$ in~$z$, with (generically) $9$ distinct roots;
  the other one is the square of a polynomial  of degree $1$ in $z$, with  an explicit (double) root $c_\nu \in \qs(\nu)[\rho_\nu]$. Since $\DT(\rho)$ must be a multiple root of $\Pol(\nu, \rho, \cdot)$, we conclude that $\DT(\rho)=c_\nu$, generically. By elimination of $\rho_\nu$ in the expression of $c_\nu$, we find its minimal polynomial:
  \begin{multline}\label{cnu_min}
    20736 \nu^{4} c_\nu^{5}-432 \nu^{3} \left(77 \nu +43\right) c_\nu^{4}+48 \nu^{2} \left(451 \nu^{2}+484 \nu +109\right) c_\nu^{3}\\
    -36 \nu  \left(203 \nu^{3}+293 \nu^{2}+143 \nu +9\right) c_\nu^{2}+4 \left(317 \nu^{4}+559 \nu^{3}+363 \nu^{2}+55 \nu +2\right) c_\nu \\
    -\nu  \left(89 \nu^{3}+189 \nu^{2}+147 \nu +7\right)=0.
  \end{multline}
  We now expand the solutions  (in $z$) of $\Pol(\nu,w,z)$  in the vicinity of $(w,z)=(\rho_\nu,c_\nu)$ in powers of $1-w/\rho_\nu$, and find, using again the {\tt algeqtoseries} command, two series with a square root singularity (in the generic case).
  The non-decreasing branch must be $\DT$. We thus obtain: 
  \beq\label{DT-sing}
  \DT= c_\nu -(d_\nu)^{1/2} (1-w/\rho_\nu)^{1/2} + \LandauO (1-w/\rho_\nu),
  \eeq
  where $d_\nu\in \qs(\nu, \rho_\nu)$ is explicit.  The other singular branch is obtained by changing the sign in the second term.  This gives~\eqref{sing_generic} by integration, and this holds for any $\nu\in(0, \nu_c)$ under the following two conditions:
  \begin{itemize}
  \item[--] the polynomial
    $\Pol(\nu,\rho_\nu,z)$ has indeed $10$ distinct roots, with $c_\nu$ as its unique double root,
  \item[--] the above number $d_\nu$ is well-defined and non-zero.
  \end{itemize}
  These conditions hold except for finitely many (algebraic) values of $\nu$.

  We will now prove that the singular behaviour~\eqref{DT-sing} holds in fact in the whole interval $(0, \nu_c)$. First,  the value of $\DT$ at its radius, namely  $\DT(\nu,\rho_\nu)$, is continuous at every point $\nu$ as a function taking its values in $\rs \cup \{\infty\}$ (because the coefficients of $\DT$ are non-negative). Then, we observe that when $\nu \rightarrow 0^+$, only one solution of~\eqref{cnu_min} is real: this solution must coincide with  $c_\nu$ near~$0$ (except possibly at finitely many values of $\nu$).
  Its expansion near $0$ is $c_\nu=7\nu/9+\LandauO(\nu^2)$. Moreover, this solution 
  is continuous and increasing on $(0, \nu_c)$, reaching at $\nu_c$ the value $\DT(\nu_c, \rho_{\nu_c}) = \frac{25}{38} - \frac{\sqrt{47}}{19}$ (Figure~\ref{fig:sing_val_DT1}).
  By continuity,
  $c_\nu$ coincides with  $\DT(\rho_\nu)$ on the whole interval $(0, \nu_c)$, including at possibly non-generic points. So we just have to expand the roots (in $z$) of $\Pol(\nu, w,z)$ near $w=\rho_\nu, z= c_\nu$. This is what we have done in~\eqref{DT-sing}, for~$\nu$ generic. But now we examine the Newton polygon procedure step by step, to see what could go wrong for certain specific values of $\nu$. This is inspired from~\cite[Prop.~3.4]{Bernardi-Curien-Miermont} and~\cite[Lem.~2.7]{albenque-Ising}.

  Recall that $c_\nu$ belongs to $\qs(\nu)[ \rho_\nu]$.  Starting from the minimal polynomial~\eqref{alg:dT1} of $\DT$, we form by elimination of $\rho_\nu$ (which is a root of $\Delta_1$) a polynomial $\overline\Pol ( Y,\vareps)$ with coefficients in $\qs[\nu]$ that vanishes for $\varepsilon:= 1-w/\rho_\nu$ and $Y:=\DT(\nu,w)-\DT(\nu,\rho_\nu)=\DT -c_\nu$. This calculation is done for a generic value of $\nu$. However, by continuity in $\nu$, this polynomial vanishes at the above values of $\vareps$ and $Y$ for any $\nu\in (0, \nu_c)$. It has degree $55$ in $Y$, and, by construction, vanishes when $Y=\varepsilon=0$. We now apply to it the Newton polygon method; see, e.g.,~\cite[Sec.~VII.7.1]{flajolet-sedgewick}. All monomials $Y^i\vareps^j $ that occur in $\overline\Pol$  satisfy $i+2j \ge 10$, and, generically, the coefficients of $Y^0 \vareps^5 $ and $Y^{10}\vareps^0$ are non-zero: the only negative slope in the Newton polygon being  $-1/2$, this explains the square root behaviour found above.
  We next examine for which values of $\nu\in (0,\nu_c)$ one of these coefficients (or both)  vanish: we only find two suspicious values, namely $\nu=1$ (for which we know that~$\DT$ has a square root singularity, see Lemma~\ref{lem:1}), and $\nu=   \nu_b:=  1-3/\sqrt{47}$, the conjugate of $\nu_c$.

  So it remains to study this case. Returning to the value of $\Delta_1$, we find that $\rho_\nu$ is cubic over $\qs(\sqrt{47})$.  The same holds for  $\DT(\rho_\nu)$.
  For this value of $\nu$, the polynomial $\overline \Pol(Y, \vareps)$ factors over $\qs(\sqrt{47})$ as $\overline \Pol_1 (Y, \vareps)\overline \Pol_2(Y,\vareps)$, with $\overline \Pol_1$ (resp. $\overline \Pol_2$) of degree $33$ (resp. 22) in $Y$. Applying the Newton polygon method shows that the solutions of $\overline \Pol_1$ that vanish at $\vareps=0$ will have a square root singularity, but those of $\overline \Pol_2$ will have a singularity in $\vareps^{1/5}$. So it remains to check that $\overline \Pol_2( \DT(w) - \DT(\rho_\nu),1-w/\rho_\nu) \neq 0$ for $\nu=\nu_b$. To do this we evaluate the above expression at $w=0$:  and indeed, $\overline \Pol_2( - \DT(\rho_\nu),1)$ does not reduce to $0$ modulo the minimal polynomial (over $\qs(\sqrt{47})$) of $\DT(\rho_\nu)$.

  \begin{figure}[htb]
    \centering
    \includegraphics[width=60mm]{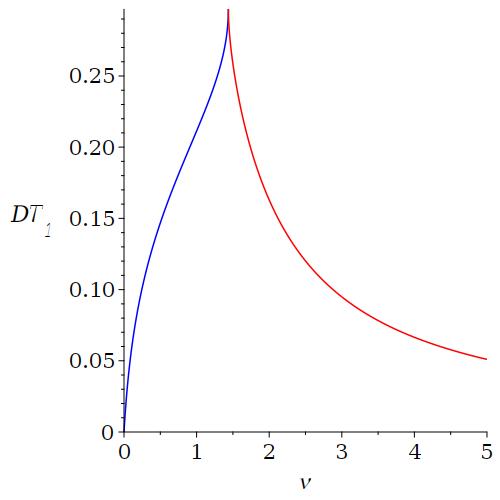}
    \hskip 10mm
    \caption{The value $c_\nu$ of $\DT$ at its radius $\rho_\nu$ increases up to $\nu_c$ and decreases afterwards.
    }
    \label{fig:sing_val_DT1}  
  \end{figure} 

   $\bullet$ We finally address the case $\nu>\nu_c$, where $\rho_\nu$ is a root of $\Delta_2$. The analysis parallels completely the case $\nu<\nu_c$. This time the value $c_\nu$ of $\DT(\nu,\rho_\nu)$ at its radius of convergence is algebraic of degree $9$ rather than $5$ (as $\rho_\nu$ itself), and is a decreasing function of $\nu$ (Figure~\ref{fig:sing_val_DT1}). Computations are heavier in this case  because the radius $\rho_\nu$ has now degree $9$ instead of $5$.  When applying the Newton polygon method to prove the square root behaviour, we obtain a polynomial $\overline\Pol(Y, \vareps)$ of degree $99$ in $Y$, with coefficients in $\qs[\nu]$, that vanishes when $\vareps=1-w/\rho_\nu$,  
   and $Y=\DT  -\DT(\rho_\nu)$.  All monomials $ Y^i\vareps^j$ that occur in it satisfy $i+2j \ge 18$, and, generically, the coefficients of $Y^0\vareps^9 $   and of $Y^{18}\vareps^0$ are non-zero: this proves the square root behaviour, except for  seven values of $\nu$ for which one of these two coefficient vanishes (or both). 
 
 These values are $\nu=2$, $\nu=1+3/\sqrt 2$, one value of degree $6$ over $\qs$, three (conjugate) values of degree $10$, and finally one of degree $16$. For the first six values, the specialized polynomial $\overline\Pol(Y, \vareps)$ (or each of its factors) has a unique negative slope $-1/2$ in its Newton polygon again. We thus conclude to a square root singularity in $\DT$. 
  We refer to our \Maple\ session for details.  

  The final value of $\nu$, of degree $16$, is more tricky as one segment in the polygon has slope~$-1$.
  This is analogous to the difficulty raised by $\nu=\nu_b$ in the case $\nu<\nu_c$, where one found the slopes~$-1/2$ and $-1/5$ in the Newton polygon. In principle, we could apply here the same strategy  as for $\nu=\nu_b$: for this value of $\nu$, denoted $\nu_{16}$, the polynomial $\Delta_2$ factors over~$\qs(\nu)$, and one finds $\rho_\nu$ and $\DT(\rho_\nu)$ to be of degree $8$ over $\qs(\nu)$. We expect $\overline\Pol$ to factor into a term of degree $88$ and one of degree $11$. But this factorization just did not finish on our laptops, and there is in this case a more theoretical argument, which we now explain. The relevant part of the set of points $(i,j)$ such that $ Y^i\vareps^j$ occurs in $\overline\Pol$, for $\nu=\nu_{16}$, is shown in Figure~\ref{fig:newton_nu16}.
 Analysing this diagram first tells us that exactly $19$ solutions $Y$ of $\overline \Pol(Y, \eps)=0$ vanish at $\eps=0$. General results on the Newton polygon (see e.g.~\cite[Thm. p.~424]{baker}) imply that exactly one of them will start with a term of the order of $\eps$. This means in particular that this solution does not have a further branching, so it is analytic near $\eps=0$: this cannot be $\DT(w)-\DT(\rho_\nu)$, which we know to be singular. So $\DT(w)-\DT(\rho_\nu)$ must be one of  the $18$ other solutions, and all of them have a square root singularity. This concludes the proof of the lemma.
\end{proof}

  \begin{figure}[htb]
    \centering
    \includegraphics[width=55mm]{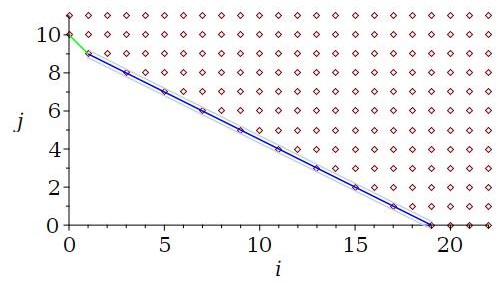}     
    \caption{The south-west part of the Newton polygon of $\overline\Pol(Y,\eps)$ for $\nu=\nu_{16}$.} 
    \label{fig:newton_nu16}
  \end{figure}

Let us now address the (non-)existence of other singularities of minimal modulus, called \emm dominant, singularities. With a parameter $\nu$, this is a difficult task, and we believe to provide new tools for it. {We actually give two proofs: the first one rules out the existence of multiple dominant singularities by examining the algebraic conditions and inequalities that $\nu$, $\rho_\nu$, $\DT(\rho_\nu)$ and other algebraic quantities should satisfy, and proving, using {\tt msolve}, that there is no solution. These ideas can be applied in many contexts. The second idea was suggested to us by Andrew Elvey Price. Roughly speaking, it says that a series that is obtained as the composition of two series with non-negative coefficients cannot have multiple dominant singularities. This idea is used in one of his recent papers, with Nessmann and Raschel~\cite[Lem.~10]{elvey-nessmann-raschel}. It gives a more combinatorial explanation of the uniqueness of the dominant singularity. One difference with~\cite{elvey-nessmann-raschel} is that in the latter paper, the series under study, say $U(z)$ is expressed as $V(\Rat(z))$ from some explicit rational function $\Rat(z)$, while the result that we use here relies on an implicit function schema, $U(z)=G(z,U(z))$ (see Proposition~\ref{lem:compo_unique}).}

\begin{Lemma}[\bf Uniqueness of the dominant singularity]
  \label{lem:unique}
  Let $\nu>0$. The series $T_1$ has a unique dominant singularity, which is its radius of convergence $\rho_\nu$. Hence the estimates~\eqref{asympt} hold for $i=1$.
\end{Lemma}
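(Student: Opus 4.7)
The strategy rests on two ingredients: (1) combinatorial positivity of the coefficients of $T_1$, which, together with aperiodicity of its support, forces the radius $\rho_\nu$ to be the \emph{only} dominant singularity; and (2) the standard transfer theorems of analytic combinatorics applied to the singular expansions established in Lemma~\ref{lem:sq-root}.

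First I would verify the aperiodicity condition. Each coefficient $t_n(\nu):=[w^n]T_1(\nu,w)$ counts the weighted colourings of near-triangulations of outer degree~$1$ with $n$ vertices and is therefore a polynomial in $\nu$ with \emph{non-negative integer} coefficients, not identically zero as soon as there exists a near-triangulation of outer degree~$1$ on $n$ vertices. Such a map exists for every $n \geq 2$: start from the smallest one (two vertices joined by a loop and an interior edge/triangle) and iteratively insert a new vertex inside any triangular inner face, joining it to the three surrounding vertices; this operation increases the vertex count by~$1$ and preserves the "outer degree~$1$, all other faces triangular" property. Hence $t_n(\nu)>0$ for every $\nu>0$ and every $n\geq 2$, so the support of $T_1(\nu,\cdot)$ is $\{2,3,4,\dots\}$, whose gcd is $1$.

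Next I would invoke the Daffodil lemma / aperiodicity principle for series with non-negative coefficients (see, e.g., Flajolet–Sedgewick, Thm.~IV.1 and the subsequent aperiodicity discussion): for an analytic power series with non-negative coefficients whose support has gcd $d$, the singularities on the circle of convergence are located exactly at $\rho_\nu \zeta$ with $\zeta^d=1$. Here $d=1$, so $\rho_\nu$ is the unique singularity of $T_1(\nu,\cdot)$ on $|w|=\rho_\nu$. Combined with Lemma~\ref{lem:Delta} (which localizes the finitely many singularities of $T_1$ to the roots of $\Delta_0\Delta_1\Delta_2$), any other singularity of $T_1$ then lies strictly outside the disk $|w|\leq \rho_\nu$. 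With the unique-dominant-singularity condition and the singular expansions~\eqref{sing_generic} and \eqref{sing_crit} provided by Lemma~\ref{lem:sq-root} (with non-zero leading coefficient $\gamma_{1,\nu}$, resp.\ $\gamma_{\nu_c}$), the singular transfer theorems (Flajolet–Sedgewick, Chap.~VI) yield directly $[w^n]T_1 \sim \kappa_{1,\nu}\rho_\nu^{-n}n^{-5/2}$ for $\nu \neq \nu_c$ and $[w^n]T_1 \sim \kappa_{1,\nu_c}\rho_{\nu_c}^{-n}n^{-11/5}$ at criticality. Positivity of $\kappa_{1,\nu}$ is immediate from $t_n(\nu)\geq 0$.

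The main obstacle is the transition from "unique dominant singularity via positivity" to actual asymptotics when one wishes to be fully rigorous about the applicability of the transfer theorems at $\nu=\nu_c$, where the exponent $6/5$ is non-standard. Here one uses that the singular expansion in Lemma~\ref{lem:sq-root} is a convergent Puiseux expansion valid in a slit neighbourhood of $\rho_{\nu_c}$; the absence of other singularities on $|w|=\rho_{\nu_c}$ (from aperiodicity) allows a Hankel-contour argument on any standard $\Delta$-domain. An alternative approach, more in line with the computer-algebraic spirit of the paper and possibly closer to the authors' "new tools" remark, would be to verify uniqueness directly by an explicit resultant computation: eliminate between $D(\nu,w_1)=0$, $D(\nu,w_2)=0$ and $w_1\overline{w_1}=w_2\overline{w_2}$ (with $D=\Delta_0\Delta_1\Delta_2$), and certify that no common solution with $w_1\neq w_2$ occurs on the positive real axis. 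This is self-contained but substantially heavier, whereas positivity bypasses it cleanly.
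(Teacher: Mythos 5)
Your proof has a genuine gap at the central step, and the gap is precisely the difficulty that the paper flags (``often a difficult task... new tools'') before the lemma.

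The Daffodil lemma (or the aperiodicity discussion near Thm.~IV.1 of Flajolet--Sedgewick) does \emph{not} say that the singularities on the circle of convergence sit only at $\rho_\nu\zeta$ with $\zeta^d=1$. What it says is strictly weaker: for a power series with non-negative coefficients and support of gcd $d$, the inequality $|f(z)| < f(|z|)$ holds whenever $z/|z|$ is not a $d$-th root of unity. For $d=1$ and a series converging at $\rho_\nu$ (as $\DT$ does here, since the singular exponent found in Lemma~\ref{lem:sq-root} is $1/2$, or $1/5$ at criticality), this gives $|\DT(\nu,s)| < \DT(\nu,\rho_\nu)$ for every $s\neq\rho_\nu$ on the circle. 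But this modulus inequality does not exclude a singularity of $\DT$ at $s$: a branch point with a positive Puiseux exponent, say $(1-w/s)^{1/2}$, keeps the function bounded and continuous up to $|w|=\rho_\nu$, so no contradiction with $|\DT(s)|<\DT(\rho_\nu)$ arises. (Aperiodicity does rule out a \emph{pole} on the circle, but the potential singularities here are branch points.) Concrete counterexamples exist: take any aperiodic algebraic series $g$ with non-negative coefficients and radius $1$, then $g(z)+\varepsilon\,g(z^2)$ has non-negative coefficients, aperiodic support, and dominant singularities at both $1$ and $-1$. So ``$d=1$ implies unique dominant singularity'' is simply false for algebraic series, and the core of your argument collapses.

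The paper's actual proof does invoke the aperiodicity inequality $|\DT(\nu,s)|<\DT(\nu,\rho_\nu)$, but only as \emph{one constraint} in a larger polynomial system: writing $s=x+iy$ with $|s|=\rho_\nu$, requiring $\Delta_j(\nu,s)=0$ and that $\DT(\nu,s)$ be a multiple root of the minimal polynomial of $\DT$, one obtains a zero-dimensional real polynomial system (after splitting into real and imaginary parts). The authors then enumerate, with certified precision via {\tt msolve}, all real solutions of these finitely many systems and check that none of them satisfies the conjunction of inequalities~\eqref{ineq1}--\eqref{ineq2}. Your closing paragraph gestures at a resultant-based version of this, but then discards it in favour of ``positivity bypasses it cleanly'' --- it does not. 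To repair your proof you would essentially have to carry out the computation you sketched as the alternative, which is what the paper does.
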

\begin{proof}[First proof of Lemma~\ref{lem:unique}]
  We begin by studying separately the cases $\nu=1$, $\nu=2$ and $\nu=\nu_c$. For $\nu=1$ the result is stated in Lemma~\ref{lem:1}. For $\nu=2$, we check (numerically) that the only root of $\Delta:=\Delta_0\Delta_1\Delta_2$ that has modulus $\rho_2$ is $\rho_2$ itself. The same holds for $\nu=\nu_c$. For instance, the $12$ roots of $\Delta$ for $\nu=\nu_c$ are plotted in Figure~\ref{fig:roots_disc2}, together with the circle of radius $\rho_{\nu_c}$. Observe that  $\Delta$ has generically $1+5+9=15$ roots, but at $\nu_c$ there is a root $\rho_{\nu_c}$ of multiplicity $4$.

  \begin{figure}[htb]
   \centering
    \includegraphics[width=55mm]{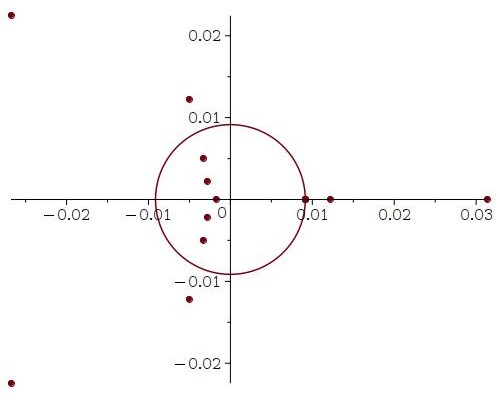}    
    \caption{The roots of $\Delta(\nu_c, \cdot)$.}
    \label{fig:roots_disc2}
  \end{figure}

  We now take $\nu>0$, with $\nu \not \in \{ 1, 2, \nu_c\}$. 
  Let $s=x+iy$, with $x$ and $y$ real, be a dominant singularity of $T_1$, or equivalently $\DT$, distinct from $\rho_\nu$. By Lemma~\ref{lem:Delta}, it satisfies $\Delta_j(\nu,s)=0$, for some $j\in \{0,1,2\}$. The series $\DT$ converges at this point (since it converges at $\rho_\nu$ by Lemma~\ref{lem:sq-root}), and, since it is an aperiodic series with non-negative coefficients, we have $\left|\DT(\nu, s)\right|< \DT(\nu, \rho_\nu)$.

  Let us again denote by $\Pol(\nu,w,z)$ the minimal polynomial of $\DT$, see~\eqref{alg:dT1}. Since $s$ is a singularity of $\DT$, the value $z=\DT(\nu,s)$ must be a multiple root of $\Pol(\nu,w,z)$. Hence we have the following system of $6$  equations relating the $5$ values $\nu$, $\rho\equiv\rho_\nu$, $s$, $\DT(\rho)\equiv \DT(\nu, \rho)$, and $\DT(s)\equiv \DT(\nu, s)$:
  \beq\label{syst}
  \begin{cases}
    \Delta_i(\nu, \rho)&=0,\\
    \DT( \rho) &= c_\nu,    \\
    |s|^2      &=\rho^2,
  \end{cases}
  \hskip 15mm          \begin{cases}
    \Delta_j(\nu,s)&=0,\\
    \Pol\big(\nu, s, \DT(s)\big)&= 0,\\
    \partial_z \Pol\big(\nu, s, \DT(s)\big)&=0,
  \end{cases}
  \eeq
  where $i=1$ if $\nu<\nu_c$ and $i=2$ otherwise, and $c_\nu$ is a fraction in $\nu$ and $\rho$, determined in the proof of Lemma~\ref{lem:sq-root}, which takes two different values depending on whether $\nu<\nu_c$ or $\nu>\nu_c$; see Figure~\ref{fig:sing_val_DT1}.

  If we look for \emm real, dominant singularities, corresponding to $s=-\rho$, the system simplifies to $5$ real equations in $4$ real unknowns:
\[ 
  \begin{cases}
    \Delta_i(\nu, \rho)&=0,\\
    \DT( \rho) &= c_\nu,  
  \end{cases}
  \hskip 15mm          \begin{cases}
    \Delta_j(\nu,-\rho)&=0,\\
    \Pol\big(\nu, -\rho, \DT(-\rho)\big)&= 0,\\
    \partial_z \Pol\big(\nu, -\rho, \DT(-\rho)\big)&=0.
  \end{cases}
  \] 
  This can be turned into a   \emm polynomial, system by taking the numerator of $\DT(\rho)-c_\nu$ rather than $\DT(\rho)-c_\nu$ itself.

  If we look for non-real dominant singularities, $s=x+iy$ with $y\neq 0$, each of the $3$ equations on the right-hand side of~\eqref{syst} splits into $2$ real equations, relating $x$, $y$, $U:=\Re(\DT(s))$ and $V:=\Im(\DT(s))$, giving a total of $9$ real equations for $7$ real unknowns. For instance, $\Delta_1(x+iy)=0$ with $y\neq0$ splits into two real polynomials in $x$ and $z:=y^2$, namely
  \[
    \Delta_1(x+iy)+\Delta_1(x-iy)=0, \qquad \text{and} \qquad \frac{ \Delta_1(x+iy)-\Delta_1(x-iy)=0}{iy}=0.
  \]

  The quantities $\nu$, $\rho$, $s$, $\DT(\rho)\equiv \DT(\nu, \rho)$ and $\DT(s)\equiv \DT(\nu,s)$ are moreover constrained by the following inequalities:
  \beq\label{ineq1}
  \left\{
    \begin{array}{ccl}
      0<\nu<\nu_c, &\quad  \rho_{\nu_c}< \rho<\rho_0 ,& \text{if } i=1,\\
      \nu_c<\nu, &\quad  \rho_1/\nu^3 <\rho< \rho_{\nu_c}, & \text{if } i=2,
    \end{array}    \right.
  \eeq
  and
  \beq\label{ineq2}
  0<   \DT(\rho)< \DT(\nu_c, \rho_{\nu_c}), \qquad
  \left| \DT(s)\right|< \DT(\rho),
  \eeq
  where
  \[
    \rho_0=1/8, \qquad \rho_1= \sqrt3/108, \qquad \rho_{\nu_c}= \frac{1295\sqrt{47}-7875}{109744},  \qquad \DT(\nu_c, \rho_{\nu_c}) = \frac{25}{38} - \frac{\sqrt{47}}{19}.
  \]

  We now take the $6$ real polynomial systems obtained for $(i,j)\in\{1,2\}\times \{0,1,2\}$, each system being declined in two versions, the real case $s=-\rho$ ($5$ equations) and the non-real case ($9$ equations). We use {\tt msolve} to approximate their real solutions (rigorously, up to arbitrary precision). We then examine which of these (finitely many) solutions satisfy~\eqref{ineq1} and~\eqref{ineq2}. In most cases, no solution remains.

  For instance, when $i=j=1$ and $s=-\rho$, {\tt msolve} returns $23$ candidates for $(\nu, \rho, \DT(\rho), \DT(-\rho))$. Only $7$ of them satisfy $0<\nu<\nu_c$. Among them, only $3$ satisfy  $\rho_{\nu_c}<\rho<1/8$. Each of these 3 would be such that $\left| \DT(s)\right|>\DT(\rho)$, and this case is solved.

  In some cases we use an additional sieve: we observe that the value found for $\rho$, supposed to be $\rho_\nu$, is not on the correct  branch of $\Delta_i$ (Figure~\ref{fig:radius}) and thus cannot be the radius. This allows us to exclude more points. For instance, when $i=2$ and $j=1$, {\tt msolve} returns in the non-real case $39$ solutions. The conditions on $\nu$ and $\rho$ allow us to restrict our attention to  $7$ of them.  We can exclude $4$ more because the value of $\rho$ is not on the correct branch of $\Delta_2$. For the remaining ones  we find that $\left| \DT(s)\right|>\DT(\rho)$.

  \smallskip
  We must mention two difficulties. First, when $j=2$, that is $\nu>\nu_c$, the denominator of~$c_\nu$ contains factors $(\nu-1)$ and $(\nu-2)$, and this seems to  make some of our systems positive dimensional.
  So when $j=2$, we add a new variable $c$ and complete our systems with an equation $c(\nu-1)(\nu-2)=1$ to remedy this problem.

  The other difficulty is due to the size of our systems, in the non-real case. 
  For the moment, we have actually only given to {\tt msolve} the equations that do not involve the series $\DT$. For each solution $(\nu, \rho, x, z)$ (with $z=y^2$), after applying the above sieves on $\nu$ and  $\rho$, and checked that~$\rho$ is on the correct branch of $\Delta_i$, we compute  (not certified) estimates of $\DT(\rho)$ and $\DT(x+iy)$ (using the final equations of the system) and  use the condition on the moduli of these values to rule out the remaining candidates.

  In the end, we conclude that the radius of convergence of $\DT$ is always the unique dominant singularity.
\end{proof}

  We now come to an alternative proof, which relies on the existence of a composition equation for the series $T_1$. Such equations are quite common in the world of maps~\cite{BaFlScSo-Airy}. The proof of the following proposition is given in Appendix~\ref{app:unique-compo}.

\begin{Proposition}\label{lem:compo_unique}
  Let $U(z)\in \rs[[z]]$ be an aperiodic\footnote{there exist three integers $i<j<k$ such that the corresponding powers of $z$ actually appear in $U(z)$ and $\gcd(j-i,k-i)=1$.} power series in $z$ that converges in a neighbourhood of $0$, and such that $U(0)=0$. Assume that 
  \beq\label{eq:UG}
    U(z)= G(z, U(z))
  \eeq
  for some series $G(z,u)=\sum_{m,n} g_{m,n} z^m u^n \in \rs[[z,u]]$ that converges in a neighbourhood of $(0,0)$ and satisfies the following conditions:
  \[
    g_{0,0}=0, \quad g_{m,n} \ge 0 \ \forall (m,n), \quad g_{0,1}< 1 \quad \text{and }\  \exists ( m,n) \text{ such that } g_{m,n}>0 \text{ with } n\ge 2 .
  \]
  Let $\mathcal R$ be the non-negative region of convergence of $G$:
  \[
    \mathcal R:= \left\{ (z,u)\in \rs_{\ge 0}^2 \text{ such that } G(z,u) <\infty\right\}.
  \]
  Assume moreover that the closure $\overline{\mathcal R}$ of $\mathcal R$ has no vertical boundary: that is, if $(z,u) \in \overline{\mathcal R} \cap \rs_{>0}^2$ and $0< u'<u$, then $(z,u') \in {\mathcal R}^ \circ$, the interior of $\mathcal R$.

  Then $U(z)$ has non-negative coefficients, its radius of convergence $\rho$ is finite, and  is the unique singularity of $U$ on its disk of convergence.
\end{Proposition}

\begin{proof}[Second proof of Lemma~\ref{lem:unique}.]
  We will first write a composition equation for the series $T_1$, or rather, for $T_1/w$. Recall that $T_1=\LandauO(w^2)$.
  
  Let $L(q; a, b)\equiv L(a,b)$
be the  \gf \ of \emm loopless, $q$-coloured near-triangulations of outer degree $2$, where $a$ (resp. $b$) records the number of monochromatic (resp. bicoloured) edges. The colour of the root vertex is prescribed, as always.

As already observed, we have $T_1=\nu T_2$. Given a near-triangulation $M$ of $\mathcal T_2$, two cases may occur:
\begin{itemize}
\item the boundary of the outer face of $  M$ consists of two loops; such maps are counted by $T_1^2/w$,
  \item the edges incident to the outer face are not loops. 
  \end{itemize}
  In the latter case, let us draw $M$ with the root face as the outer face, and consider a \emm maximal, loop, that is, a loop that is not included (for this  drawing) in another loop. Then the outer side of this loop belongs to the boundary of a finite face of degree $3$. The two other edges that bound this face share the same endpoints (Figure~\ref{fig:compo}). For every maximal loop, we then merge these two edges (and what lies between them) into a single edge. This yields a loopless near-triangulation of outer degree $2$, which we call the \emm projection, of $M$.

  \begin{figure}[htb]
    \centering
    \includegraphics{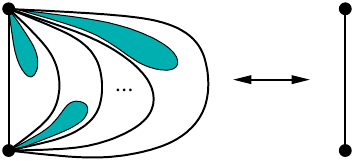}
    \caption{Compression of the maximal loops and the faces containing them.}
    \label{fig:compo} 
  \end{figure}

  Conversely, starting from  a loopless near-triangulation $\overline M$ of outer degree $2$, we obtain all maps of $\mathcal T_2$ that project on $\overline M$ by replacing each edge of $\overline M$  by a sequence of digons that include a map of $\mathcal T_1$ attached to one of the two corners of the digon (Figure~\ref{fig:compo}). If $\overline M$ has $m$ monochromatic edges and $p$ bicoloured ones, then the contribution in the series $T_2$ of maps that project on $\overline M$ is thus
  \[
  w^{5/3}   \left(    \frac {\nu w^{1/3}}{1-2\nu T_1/w}\right)^m
    \left(    \frac {w^{1/3}}{1-2 T_1/w}\right)^p
  \]
(recall that a near-triangulation of outer degree $2$ having $n$ vertices has $3n-5$ edges).  Putting together the above observations finally gives
  \[
    T_1= \nu T_2 = \nu \frac{T_1^2}w +
    w^{5/3} \nu  L  \left(    \frac {\nu w^{1/3}}{1-2\nu T_1/w},
      \frac {w^{1/3}}{1-2 T_1/w}\right).
  \]
  Note that $L(a,b)=\sum_{m,p} \ell_{m,p}a^m b^p$ where $p+m+2\equiv 0$ mod $3$ as soon as $\ell_{m,p}\neq 0$.

  We now apply Proposition~\ref{lem:compo_unique} with $z=w$, $U(z)=T_1(q, \nu,z)/z$ and
  \[
    G(z,u)= \nu u^2 + z^{2/3} \nu L\left(    \frac {\nu z^{1/3}}{1-2\nu u},
      \frac {z^{1/3}}{1-2 u}\right).
  \]
  The series $U(z)$ is aperiodic, as there exist coloured near-triangulations of outer degree $2$ with any number $n$ of vertices, for $n\ge 2$. The conditions on the coefficients of $G$ are easily checked. In particular, $g_{0,1}=0$. Moreover $G(z,u)$ converges around the origin as the number of near-triangulations with $n$ vertices grows exponentially.

  Now let $\cR_L$ be the non-negative region of convergence of $L$:
  \[
    \mathcal R_L:= \left\{ (a,b)\in \rs_{\ge 0}^2 \text{ such that } L(a,b) <\infty\right\}.
  \]
     Then $L$ is analytic in the interior of $\cR_L$. Moreover, if $(a,b) \in \overline \cR_L$ and $0<a'<a$, $0<b'<b$, then $(a',b')\in \cR_L^\circ$.
     Now, for $z, u\ge 0$, the series $G(z,u)$ converges if and only if
     \[
       u<\frac 1 2 \min(1, 1/\nu) \quad \text{and} \quad
       \left(    \frac {\nu z^{1/3}}{1-2\nu u},
         \frac {z^{1/3}}{1-2 u}\right) \in \cR_L.
     \]
     These conditions thus define the region $\cR$. Now assume that $(z,u)\in \overline \cR \cap \rs_{>0}^2$. Then
          \[
       u\le \frac 1 2 \min(1, 1/\nu) \quad \text{and} \quad
       \left(    \frac {\nu z^{1/3}}{1-2\nu u},
         \frac {z^{1/3}}{1-2 u}\right) \in \overline\cR_L.
     \]
     Given that $\nu>0$, then for $0<u'<u$ we have $(z,u')\in \cR^\circ$ by the above observation on $\cR_L$, hence $\overline\cR$ has no vertical boundary as required by Proposition~\ref{lem:compo_unique}. We conclude that $U(w)=T_1(w)/w$ has a unique dominant singularity.
\end{proof}

\begin{Lemma}[\bf The series $\boldsymbol{T_i}$ with $\boldsymbol{i>1}$]
  \label{lem:Ti}
  Let $\nu>0$.      For $i>1$, the series $T_i$ has radius of convergence $\rho_\nu$. This is its only dominant singularity. The singular behaviour of $T_i$ at this point is~\eqref{sing_generic} or~\eqref{sing_crit}, depending on whether $\nu\neq\nu_c$ or $\nu=\nu_c$.
\end{Lemma}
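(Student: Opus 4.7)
The plan is to bootstrap the result from the case $i=1$ (Lemmas~\ref{lem:sq-root} and~\ref{lem:unique}) using the polynomial dependence of $T_i$ on $T_1, T_3, T_5, T_7$ established in Section~\ref{sec:Ti}. The proof splits into: extending the $i=1$ analysis to $T_3$, $T_5$, $T_7$; deducing the radius of convergence of $T_i$; transferring the singular expansion through $P_i$; and proving the non-vanishing of the leading singular coefficient.

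First I would reproduce, for each of $T_3$, $T_5$, $T_7$, the singularity analysis performed for $T_1$. These series lie in the same extension $\qs(\nu,w)[\DT]$ as $T_1$ and admit explicit minimal polynomials of degree $11$, so their potential singularities are confined to the (finite) zero set of the relevant discriminants. Applying the same Newton-polygon and {\tt msolve}-based dominant-singularity arguments as for $T_1$ — or, more conceptually, writing $T_j = S_j(w, T_1)$ for a rational $S_j$ and transferring the expansion $T_1 - T_1(\rho_\nu) \sim \gamma_{1,\nu}(1-w/\rho_\nu)^{3/2}$ through $S_j$ — shows that each of $T_3$, $T_5$, $T_7$ has $\rho_\nu$ as its unique dominant singularity, with the same singular exponent as $T_1$. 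Ruling out spurious poles of $S_j$ on the closed disc $|w|\le \rho_\nu$ reduces to a finite certified computation parallel to the one in the proof of Lemma~\ref{lem:unique}.

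Next, by Section~\ref{sec:Ti}, each $T_i$ with $i>1$ is a polynomial $P_i(w, T_1, T_3, T_5, T_7)$ with coefficients in $\qs(\nu)$. Since the only possible singularities of $T_i$ are the (common) singularities of $T_1, T_3, T_5, T_7$, the series $T_i$ has $\rho_\nu$ as its sole dominant singularity, and in particular $\rho(T_i) \ge \rho_\nu$. For the matching upper bound I would use the combinatorial inequality obtained by adding a dangling edge in the outer face: $T_{i+2} \ge w\, T_i$ coefficient-wise. Combined with $T_3 \ge w T_1$ and $T_2 = T_1/\nu$, iteration yields $\rho(T_i) \le \rho_\nu$ for every $i\ge 2$, hence $\rho(T_i) = \rho_\nu$.

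Substituting the singular expansions of $T_1, T_3, T_5, T_7$ into $P_i$ immediately gives
\[
  T_i(w) = \alpha_{i,\nu} + \beta_{i,\nu}(1-w/\rho_\nu) + \gamma_{i,\nu}(1-w/\rho_\nu)^{3/2}\,(1+o(1))
\]
for $\nu \neq \nu_c$, with $\gamma_{i,\nu} = \sum_{j\in\{1,3,5,7\}} (\partial_{T_j}P_i) \cdot \gamma_{j,\nu}$ evaluated at the singular point, and the analogous expansion at $\nu=\nu_c$ with exponent $6/5$. The hard part is proving $\gamma_{i,\nu} \neq 0$, since a priori the linear combination above could vanish by coincidence. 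For this I would lift $T_{i+2} \ge w T_i$ to asymptotics: $[w^n] T_{i+2} \ge [w^{n-1}] T_i$. Starting from $\kappa_{1,\nu} > 0$ (Lemma~\ref{lem:unique}) and $\kappa_{2,\nu} = \kappa_{1,\nu}/\nu > 0$, induction on $i$ yields $\kappa_{i,\nu} \ge \rho_\nu \kappa_{i-2,\nu} > 0$ for all $i\ge 1$. Since $\gamma_{i,\nu} = 0$ would force $[w^n]T_i = o(\rho_\nu^{-n} n^{-5/2})$ and hence $\kappa_{i,\nu}=0$, this contradiction establishes $\gamma_{i,\nu} \neq 0$ and simultaneously the asymptotic estimate~\eqref{asympt}; the case $\nu=\nu_c$ is identical with $n^{-5/2}$ replaced by $n^{-11/5}$.
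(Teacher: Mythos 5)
Your proposal is correct, and the conclusion matches the paper's, but you take a genuinely different route. Both arguments rest on (i) the fact that $T_i$ lives in $\qs(\nu,w)[\DT]$, (ii) combinatorial coefficient inequalities, and (iii) the established behaviour of $T_1$; the differences lie in how the uniqueness of the dominant singularity and the exact singular exponent are obtained. The paper sidesteps any further singularity analysis: it proves the two-sided bound $\nu^{2i-2}[w^{n-i+1}]T_1 \le [w^n]T_i \le \min(1,\nu)^{-(i-1)}[w^n]T_1$, which immediately pins down both the radius and the exponent via $[w^n]T_i \asymp \rho_\nu^{-n}n^{-1-e}$; uniqueness of the dominant singularity is then handled abstractly, by isolating the putative rational part $R$ carrying the other poles on $|w|=\rho_\nu$ and invoking a lemma of Braverman on exponential polynomials together with non-negativity of the coefficients. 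You instead replicate the full certified Newton-polygon/{\tt msolve} analysis of Lemmas~\ref{lem:sq-root}--\ref{lem:unique} for $T_3$, $T_5$, $T_7$ individually, then transfer singularity locations and expansions to general $T_i$ through the polynomial $P_i(w,T_1,T_3,T_5,T_7)$, and finally prove $\gamma_{i,\nu}\neq 0$ by a one-sided bound (the dangling-edge inequality $T_{i+2}\ge w\,T_i$) rather than a two-sided one. This buys you independence from Braverman's lemma and a more explicit description of where the singularities sit, at the cost of a substantial extra certified computation (three more series to analyse in full) that the paper deliberately avoids. One small point worth being explicit about: when you say the transfer through $S_j$ gives ``the same singular exponent as $T_1$'', this presupposes that the relevant partial derivative of $S_j$ does not vanish at the singular point; in fact this non-vanishing does not need a separate check, since your own combinatorial induction at the end (with base case $T_2=T_1/\nu$ and $T_3\ge wT_1$) already forces the exponent of $T_3$, $T_5$, $T_7$ to be exactly $3/2$ (or $6/5$ at $\nu_c$). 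With that observation your proof is self-contained and correct.
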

\begin{proof}
  We first establish  the following bounds, for $\nu>0$:
\[ 
  \nu^{2i-2} [w^{n-i+1}] T_1 \le  [w^n]T_i \le \frac 1 {\min (1,\nu)^{i-1}}  [w^n]T_1 . 
  \] 
  They  follow from two basic constructions, illustrated in Figure~\ref{fig:TiT1}. For the lower bound, we construct a near-triangulation of outer degree $i$ by adding $(i-1)$ vertices and $2(i-1)$ monochromatic edges to a near-triangulation of outer degree $1$. For the upper bound, we construct a near-triangulation of outer degree $1$ by adding $i-1$ edges, which may be monochromatic or not, to a near-triangulation of outer degree $i$.

  \begin{figure}[htb]
    \centering
    \scalebox{0.9}{\input{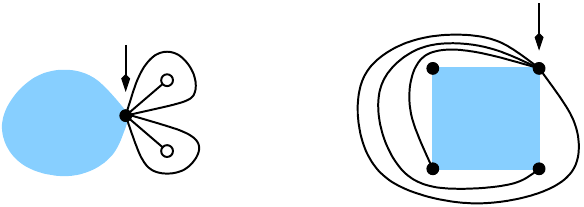_t}}
    \caption{\emm Left., Construction of a map of $\mathcal T_i$ from a map of $\mathcal T_1$ (here, $i=3$).  \emm Right., Construction of a map of $\mathcal T_1$ from a map of $\mathcal T_i$ (here, $i=4$). }    
    \label{fig:TiT1}
  \end{figure}

  These bounds imply that $T_i$ has the same radius of convergence as $T_1$.
  Moreover, by Lemma~\ref{lem:unique}, its coefficients admit lower and upper bounds of the form~\eqref{asympt}: there exist positive constants $\kappa_1$ and $\kappa_2$, depending on $i$ and $\nu$, such that
  \beq\label{asympt:bounds}
  \kappa_1\, \rho_\nu^{-n}  n^{-1-e}\le [w^n]T_i \le  \kappa_2\, \rho_\nu^{-n}  n^{-1-e},
  \eeq
  with $e=3/2$  for $\nu\not=\nu_c$ and $e=6/5$ otherwise.

  We have proved in Section~\ref{sec:Ti} that $T_i$ belongs to $\qs(\nu,w,\DT)$, or equivalently to $\qs(\nu,w)[\DT]$.
  Thus $T_i$ is either rational in~$w$, or algebraic of degree $11$. The former case is impossible because of the  bounds~\eqref{asympt:bounds}, so~$T_i$ is algebraic of degree $11$. This is the asymptotic argument mentioned at the end of Section~\ref{sec:Ti}.

  Let us now discuss the dominant singularities of $T_i$. 
  We have already argued that $T_i$ has radius $\rho_\nu\equiv \rho$.  Assume that $T_i$ has  dominant singularities other than the radius. Since $T_1$ has a unique dominant singularity, and  $T_i\in\qs(\nu,w)[\DT]$,  all dominant singularities of $T_i$ distinct from~$\rho$  are poles. Moreover, if $s$ is a pole, its complex conjugate $\bar s$ is also one. Isolate these dominant poles by writing
  \[
    T_i= R(\nu,w) + S(\nu,w),
  \]
  where
  \begin{itemize}
  \item 
    $R(\nu,w)\in \rs(\nu,w)$ has only poles of modulus $\rho$,  distinct from $\rho$,
  \item  $S(\nu,w)\in \rs(\nu)[[w]]$ has $\rho$ as its unique dominant singularity.
  \end{itemize}
  Let $m\ge 1$ be the maximal order of a pole of $R(\nu,w)$. Then as $n$ tends to infinity,
  \[
    [w^n] R(\nu,w) = \rho^{-n} n^{m-1} \left( \alpha_1 \zeta_1^n+ \cdots + \alpha_k \zeta_k^n
      + \overline {\alpha_1} \overline {\zeta_1}^n+ \cdots + \overline {\alpha_k} \overline {\zeta_k}^n\right) + \LandauO (\rho ^{-n} n^{m-2}),
  \]
  for some complex numbers $\alpha_i$ and $\zeta_i\not=1$, with $|\zeta_i|=1$.
  Now by~\cite[Lem.~4]{Braverman}, there exists a constant $c>0$ such that, for $n$ large enough,
  \[
    [w^n] R(\nu,w) <- c \rho^{-n} n^{m-1}.
  \]
  Let $\beta \in \qs\setminus\{0, 1,2,\ldots\}$ be the exponent describing the singular behaviour of $T_i$, or equivalently $S(\nu, w)$,  near $w=\rho$. By this we  mean that $T_i$ differs from a function $H$ that is holomorphic at~$\rho$ by a term that is equivalent to $(1-w/\rho)^\beta$.
  Since $T_i$ converges at $\rho$ by~\eqref{asympt:bounds}, we have $\beta>0$. Thus for $n$ large enough,
  \[
    [w^n] T_i = [w^n] R(\nu,w) + [w^n]S(\nu,w) \le -c \rho^{-n} n^{m-1} + d \rho^{-n}n^{-1-\beta}, 
  \]
 with $m-1>-1-\beta$, contradicting the fact that $T_i$ has non-negative coefficients. We conclude that $T_i$ has a unique dominant singularity. The bounds~\eqref{asympt:bounds} finally imply that $\beta=3/2$ if $\nu\neq \nu_c$, and $\beta=6/5$ otherwise.
\end{proof}

\subsection{Cubic maps}
\label{sec:asympt-cubic}
We can study analogously the singularities of the 3-Potts \gf\ $K_i$ of near-cubic maps with root degree $i$, which are dual to near-triangulations of outer degree $i$. Recall that the corresponding Potts \gfs\ are related by~\eqref{eq:KT}, specialized at $q=3$.

\begin{figure}[b!]
  \centering
  \includegraphics[width=60mm]{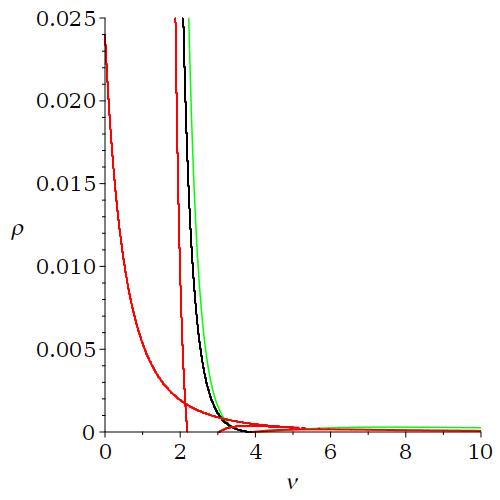} 
  \hskip 10mm   \includegraphics[width=60mm]{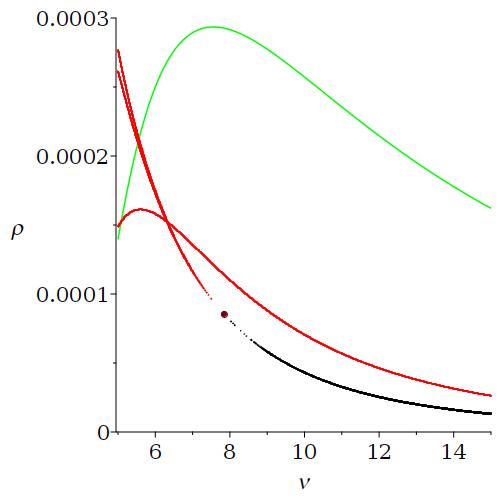} 
  \caption{The branches of $\tilde \Delta_0$ (green), $\tilde \Delta_1$ (black) and $\tilde \Delta_2$ (red).
    The plot on the right  zooms on the interval $\nu\in[5,15]$. The radius $\tilde \rho_\nu$ first follows the red branch, between $\nu=0$ and $\tilde \nu_c\simeq 7.85$, and then the top black branch (since two distinct black branches start at $\nu=\tilde \nu_c$, even if this is not clear on the figure).}
  \label{fig:radius-cubic}  
\end{figure}

\begin{Proposition}\label{prop:asympt-cubic}
  Let $\tilde\Delta_1$ and $\tilde\Delta_2$ be the  polynomials in $\nu$ and $\rho$ given by~\eqref{DeltaC1} and \eqref{DeltaC2} in Appendix~\ref{sec:app-cubic}.  Figure~\ref{fig:radius-cubic} shows, among other curves,  plots of
  the curves $\tilde \Delta_1(\nu,\rho)=0$ and $\tilde \Delta_2(\nu,\rho)=0$.  
  These polynomials are related to those of Proposition~\ref{prop:asympt} by
  \[
    \tilde \Delta_1(\nu, \rho)=\frac{(\nu-1)^{12}}{729} \Delta_1\left(\frac{\nu+2}{\nu-1}, \frac 1 3 (\nu-1)^3 \rho\right),
    \qquad
    \tilde \Delta_2(\nu, \rho)={(\nu-1)^{17}} \Delta_2\left(\frac{\nu+2}{\nu-1}, \frac 1 3 (\nu-1)^3 \rho\right).
  \]

  Let $i\ge 1$. Consider $K_i(\nu,w)\equiv K_i$ as a series in $w$ depending on a parameter $\nu\ge 0$. 
  Let $\tilde \rho_\nu$ denote its radius of
  convergence. Then $\tilde\rho_\nu$ is a continuous decreasing function of
  $\nu$ for $\nu\ge 0$, which satisfies
  \begin{align*}
    \tilde\Delta_2(\nu, \tilde\rho_\nu)&=0 \quad \hbox{for}\quad 0 \le \nu \le
                                         \tilde\nu_c:=1+\sqrt{47},\\
    \tilde\Delta_1(\nu, \tilde\rho_\nu)&=0 \quad \hbox{for} \quad\tilde\nu_c \le \nu.
  \end{align*} 
  More precisely, between $0$ and $\tilde\nu_c$ the radius $\tilde\rho_\nu$ is the branch of $\tilde\Delta_2(\nu, \rho)=0$ that starts at $\tilde\rho_0\simeq 0.024$ when $\nu=0$, and beyond $\tilde\nu_c$ the radius  is the highest of the two branches of  $\tilde\Delta_1(\nu, \rho)=0$ that start at
\[ 
  \tilde\rho_{\tilde\nu_c}=\frac{3885}{5157968}-\frac{23625 \sqrt{47}}{242424496}.
  \] 
  Moreover, $K_i$ has no dominant singularity other than its radius of convergence. 
  For $\nu\neq \nu_c$, the  behaviour of $K_i$ near $w=\tilde\rho_\nu$ is the standard singular behaviour of  planar maps series:
  \[
    K_i= \tilde\alpha_{i,\nu} +\tilde\beta_{i,\nu}(1-w/\tilde\rho_\nu)+\tilde\gamma_{i,\nu} (1-w/\tilde\rho_\nu)^{3/2}\,(1+o(1)),
  \] 
 where $\tilde\gamma_{i,\nu} \neq0$.   At $\nu=\tilde\nu_c$,  the nature of the singularity changes:
  \[
    K_i= \tilde\alpha_{i,\tilde\nu_c} +\tilde\beta_{i,\nu_c}(1-w/\tilde\rho_{i,\tilde\nu_c})+\tilde\gamma_{i,\tilde\nu_c} (1-w/\tilde\rho_{\tilde\nu_c})^{6/5}\,(1+o(1))
  \]
  where $\tilde\gamma_{i,\tilde\nu_c} \neq0$.
  In asymptotic terms:
\[ 
  [w^n]K_i\sim
  \left\{
    \begin{array}{ll}
      \tilde\kappa_{i,\nu}\, \left(\tilde\rho_\nu\right)^{-n}  n^{-5/2}&\hbox{ for  } \nu\not=\tilde\nu_c,
      \\
      \tilde \kappa_{i, \tilde\nu_c} \left(\tilde\rho_{\tilde\nu_c}\right)^{-n}  n^{-11/5}&\hbox{ for  } \nu=\tilde\nu_c.
    \end{array}
  \right.
  \] 
\end{Proposition}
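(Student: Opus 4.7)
The plan is to follow the scheme of Proposition~\ref{prop:asympt}, exploiting the duality formula~\eqref{eq:KT} at $q=3$,
\[
K_i(3,\nu,w) = \frac{3}{(\nu-1)^{i+3}}\, T_i\!\left(3,\,\frac{\nu+2}{\nu-1},\,\frac{(\nu-1)^3}{3}w\right),
\]
to transfer from the triangulation side whatever can be transferred, and to argue directly where it cannot. The Möbius involution $\phi:\nu\mapsto(\nu+2)/(\nu-1)$ is a decreasing bijection of $(1,\infty)$ sending $\tilde\nu_c=1+\sqrt{47}$ to $\nu_c=1+3/\sqrt{47}$, and, combined with the rescaling $w\mapsto(\nu-1)^3 w/3$, it carries the polynomials $\Delta_1,\Delta_2$ of Proposition~\ref{prop:asympt} to the polynomials $\tilde\Delta_1,\tilde\Delta_2$ of the statement (after clearing denominators). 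The minimal polynomial of $K_1$ (or of $\DK:=\partial_w K_1$) is likewise obtained from that of $T_1$ (or $\DT$) by this substitution, and has the same degree~$11$.

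For $\nu>1$, the duality is non-singular, since $(\nu-1)^3>0$ and $\phi(\nu)\in(1,\infty)$, and it realises an analytic isomorphism between neighbourhoods of the origin in $w$. Hence the radius, its continuity, the branch of $\tilde\Delta_i$ followed, the singular exponent (square root generically, $6/5$ at $\tilde\nu_c$), and the uniqueness of the dominant singularity all transfer verbatim from Proposition~\ref{prop:asympt}. The only qualitative change is that the roles of $\Delta_1$ and $\Delta_2$ are swapped: because $\phi$ reverses order on $(1,\infty)$, the range $\nu\in(1,\tilde\nu_c)$ on the cubic side corresponds to $\phi(\nu)\in(\nu_c,\infty)$ on the triangulation side and hence follows $\tilde\Delta_2$, while $\nu>\tilde\nu_c$ follows $\tilde\Delta_1$. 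The relation $\tilde\rho_\nu=3\rho_{\phi(\nu)}/(\nu-1)^3$, and in particular the explicit value of $\tilde\rho_{\tilde\nu_c}$, then follow from~\eqref{rhoc}.

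For $\nu\in[0,1]$, we have $\phi(\nu)\le-2$, outside the scope of Proposition~\ref{prop:asympt}, so duality yields no useful information about the radius of $K_i$, and we repeat the direct analysis of Section~\ref{sec:asympt-triang}. The case $\nu=0$ is covered by Corollary~\ref{cor:proper_cubic}, from which the radius $\tilde\rho_0$ and the map-type singular behaviour of $K_i$ at $\nu=0$ can be read off. For general $\nu\in(0,1]$, we locate the possible singularities of $K_1$ among the roots of the discriminant of its annihilating polynomial (analogue of Lemma~\ref{lem:Delta}), which factors as $\tilde\Delta_0\tilde\Delta_1\tilde\Delta_2\tilde\Delta_3^2$ up to units, with $\tilde\Delta_3$ not dividing the discriminant of the minimal polynomial of $K_1$ itself and hence extraneous. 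Continuity and log-convexity of $\nu\mapsto\tilde\rho_\nu$, together with a rigorous \texttt{msolve} enumeration of the intersection points of branches of $\tilde\Delta_0\tilde\Delta_1\tilde\Delta_2=0$ in $(\nu,w)\in(0,1]\times(0,\tilde\rho_0]$, identify the branch of $\tilde\Delta_2$ that $\tilde\rho_\nu$ follows, which matches at $\nu=1$ by continuity with the branch determined by duality on $(1,\tilde\nu_c]$. The square-root nature of the dominant singularity is then obtained by the Newton polygon method as in Lemma~\ref{lem:sq-root}, its uniqueness by the polynomial-system argument of Lemma~\ref{lem:unique}, and the extension from $K_1$ to $K_i$ for $i\ge 2$ follows Lemma~\ref{lem:Ti} via the obvious combinatorial comparisons between near-cubic maps of root degrees $i$ and $1$.

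The main obstacle is this range $\nu\in[0,1]$: conceptually the arguments are identical to the triangulation case, but the higher degree in $\nu$ of the minimal polynomial of $K_1$ (namely $15$, versus $7$ for $T_1$) makes the \texttt{msolve} computations — intersections of branches and the polynomial system ruling out spurious dominant singularities — substantially heavier. No new idea is required beyond this additional computational effort.
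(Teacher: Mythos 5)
Your transfer of the $\nu>1$ range via the Möbius involution $\phi$ and the rescaling matches the paper exactly, including the order-reversal on $(1,\infty)$ that swaps the roles of $\Delta_1$ and $\Delta_2$. The divergence is in how you handle $\nu\in[0,1]$: you declare that since $\phi(\nu)\le-2$ lies outside the range covered by Proposition~\ref{prop:asympt}, duality yields ``no useful information'' and propose to re-run the entire machinery (discriminant factorisation, branch-intersection enumeration, Newton polygons, \texttt{msolve} systems) directly on the cubic side, where the minimal polynomial has higher degree in $\nu$. The paper instead keeps exploiting duality on this interval: it views $\DK(\nu,\cdot)$ through $\DT(\nu_*,\cdot)$ with $\nu_*\in(-\infty,-2]$, a series with alternating-sign coefficients whose \emph{negative} dominant singularity $s_{\nu_*}=(\nu-1)^3\tilde\rho_\nu/3$ is a root of the same $\Delta_2(\nu_*,\cdot)$. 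It then recycles the very polynomial $\overline{\Pol}(Y,\eps)$ already built in the proof of Lemma~\ref{lem:sq-root} (the $\Delta_2$ case) and the \texttt{msolve} solution lists already produced in Lemma~\ref{lem:unique}, only filtering them by the extra constraints $\nu_*\le-2$, $s_{\nu_*}<0$. Your route would in principle succeed, but it multiplies the heavy computations you yourself flag as the obstacle, whereas the paper's route avoids them entirely. One more point you pass over lightly: for $i\ge 2$ the naive analogues of the bounds in Lemma~\ref{lem:Ti} involve powers of $\nu$ and $\min(1,\nu)^{-1}$ and thus degenerate at $\nu=0$; the paper instead uses combinatorial constructions on near-cubic maps (Figure~\ref{fig:KiK1}) that \emph{create no monochromatic edges}, giving the $\nu$-free bounds $[w^{n-2i+2}]K_1\le[w^n]K_i\le[w^{n+2i-4}]K_1$ valid down to $\nu=0$. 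Your phrase ``the obvious combinatorial comparisons'' glosses over the fact that the construction must be chosen to avoid monochromatic edges.
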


\noindent{\bf Remark.}
The corresponding result for the Ising model on cubic maps, with critical value $1+2\sqrt 7$ and exponent $4/3$,  can be found in~\cite{Chen-Turunen-phase}; see also~\cite{BK87}.

\begin{proof}
{ \ }  
  
  \noindent  {\sc{\bf A. The case $\boldsymbol{i=1}$.}}
  We begin with the case $i=1$. By~\eqref{eq:KT} and Lemma~\ref{lem:Delta}, we first observe that, for $\nu\neq 1$ (a case that will be studied separately), any singularity of $K_1(\nu, \cdot)$ is a zero of $\tilde \Delta:=\tilde\Delta_0\tilde\Delta_1\tilde\Delta_2$, where   $\tilde\Delta_1$ and $\tilde \Delta_2$ are given in the proposition, and
  \[
    \tilde \Delta_0(\nu, w)= (\nu-1)^2\Delta_0 \left(\nu_*, \frac 1 3 (\nu-1)^3 w\right)
    =16 \left(\nu +2\right) \left(\nu -1\right)^{3} w -\left(\nu -4\right)^{2},
  \]
  with $\nu_*=(\nu+2)/(\nu-1)$.
  The real positive branches of these three polynomials are shown in Figure~\ref{fig:radius-cubic}.

 It also follows from~\eqref{eq:KT} that the series $\partial _w K_1:=\DK$ satisfies
  \[
    \DK(\nu,w)= \frac 1{\nu-1}\, \DT\left(\nu_*, \frac1 3 (\nu-1)^3 w\right).
  \]
 We then derive from~\eqref{alg:dT1} the minimal polynomial of $\DK$, again of degree~$11$ in $\DK$.

 \noindent     {\bf For $\boldsymbol{\nu=1}$}, the degree of $\DK$ drops to $3$ (we are then counting cubic maps with a weight~$3$ on each vertex), and the radius of $\DK$ is found to be $\tilde \rho_1=\sqrt{3}/324$, with a square root behaviour. We observe that $\tilde\rho_1$ is a root of $\tilde \Delta_2$. Moreover, the discriminant of the minimal polynomial of~$\DK$ has a second root, namely $-\rho_1$ (while the leading coefficient is constant). But this value is only singular for the other two solutions of the minimal polynomial of $\DK$. This proves all claims of the proposition for $\nu=1$.

\noindent   {\bf For $\boldsymbol{\nu>1}$}, we have $\nu_*>1$, and Eq.~\eqref{eq:KT} offers an extremely convenient shortcut to derive the results of Proposition~\ref{prop:asympt-cubic} from those of Proposition~\ref{prop:asympt}.  In particular, the radius of convergence~$\tilde \rho_\nu$ of $K_i(\nu,\cdot)$ is then $3 \rho_{\nu_*}/(\nu-1)^3$.  A transition will occur at the dual value $\tilde \nu_c:= (\nu_c+2)/(\nu_c-1)=1+\sqrt{47}$.
Since $\nu_*$ is a  non-increasing function of $\nu$ for $\nu>1$,
the radius of convergence of $K_i$ is a root of $\tilde \Delta_1$  above $\tilde\nu_c$ and a root of $\tilde \Delta_2$ between $1$ and $\tilde\nu_c$, with continuity at $1^ +$ as we have just seen. The nature and uniqueness of the dominant singularity transfer from $T_1$ to $K_1$.  So the proposition is proved for $i=1$ and $\nu\ge 1$.

\noindent  {\bf For $\boldsymbol{\nu\in (0,1]}$}, the radius $\tilde \rho_\nu$ is a continuous function of $\nu$, and we now that it is a root of~$\tilde \Delta_2$ when $\nu=1$. 
The branch of~$\tilde \Delta_2$ that contains $\tilde\rho_1$ decreases continuously between $0$ and $1$, and meets no other branch of $\tilde \Delta:= \tilde \Delta_0 \tilde \Delta_1  \tilde \Delta_1$ on $[0,1]$: hence it gives the value of
the
radius for $\nu\in (0,1]$. In fact, by considering the minimal polynomial of $\DK$ for $\nu=0$, we see that continuity holds on the closed interval $[0,1]$. This concludes the determination of the radius of $K_1$.

 It remains to study the nature of the singularity of $K_1$ near $\tilde \rho_\nu$ and to prove  uniqueness of this dominant singularity, for $\nu\in [0,1)$.

 \smallskip

\noindent  {\bf Nature of the singularity at $\boldsymbol{\tilde \rho_\nu}$ for $\boldsymbol{\nu\in [0,1)}$.}  We have proved that $\tilde \rho_\nu$ is a root of $\tilde \Delta_2$ on this interval. Note that $\nu_*=(\nu+2)/(\nu-1)$ is then in $(-\infty, -2]$. By~\eqref{eq:KT}, we see that $\DT(\nu_*, \cdot)$ has coefficients with alternating signs, and
  a negative dominant singularity at $s_{\nu_*}:=(\nu-1)^3\tilde \rho_\nu/3$.
  This singularity is a root of $\Delta_2(\nu_*,\cdot)$.
   Moreover, $\DK(\nu, \tilde\rho_\nu)$ is continuous in $\nu\in [0,1)$ by positivity of its coefficients, which implies that  $\DT(\nu_*, s_{\nu_*})$ is a continuous function of  $\nu_* \in (-\infty, -2]$.  We can now recycle the ingredients and  calculations done at the end of the  proof of Lemma~\ref{lem:sq-root} when $\rho$ was a root of $\Delta_2(\nu, \cdot)$. In particular, $\DT(\nu_*,s_{\nu_*})$ satisfies an equation of degree $9$ over $\qs(\nu_*)$, which is also satisfied by $\DT(\nu_*, \rho_{\nu_*})$ when $\nu_*>\nu_c$. 
Moreover,     upon replacing $\nu$ by $\nu_*$, 
the polynomial $\overline \Pol(Y, \eps)$ obtained  at the end of the  proof of Lemma~\ref{lem:sq-root} vanishes 
for   $Y=\DT(\nu_*,w)-\DT(\nu_*,s_{\nu_*})$ and $\eps=1- w/s_{\nu_*}$. Recall that all points $(i,j)$ in its support satisfy $i+2j\ge 18$. We happily check that the coefficients of $Y^{18}\vareps^0$ and of $Y^0\vareps^9 $ have no root in $(-\infty, -2]$, and conclude that the expansion of $\DT(\nu_*,\cdot)$ at its negative dominant singularity $s_{\nu_*}$, or equivalently of $K_1(\nu, \cdot)$ at its radius of convergence $\tilde\rho_\nu$, is of a square root type.
 
\smallskip
\noindent  {\bf Uniqueness of the dominant singularity for $\boldsymbol{\nu\in [0,1)}$.} This boils down to proving that $\DT(\nu_*, \cdot)$ has a unique dominant singularity  $s_{\nu_*}$ (which is then negative) for $\nu_*\le -2$. Imagine there is another one, say $s$. Then the equations of System~\eqref{syst} hold for some $j\in\{0,1,2\}$, with $i=2$, $\nu$ replaced by~$\nu_*$  and $\rho$ replaced by $s_{\nu_*}$. We have already computed with {\tt msolve} estimates of the solutions of these systems in the proof of Lemma~\ref{lem:unique}. Now the solutions we are interested in should satisfy, among other conditions, the following simple ones:
\[
  \nu_*\le -2, \qquad s_{\nu_*}<0.
\]
But one checks that there are no such solutions. Hence there is only one dominant singularity in $\DK(\nu, \cdot)$.

\medskip
\noindent {\sc{\bf B. The case $\boldsymbol{i>1}$.}}   Let us now consider the case $i>1$, with $\nu\ge 0$. The idea is to adapt the proof of Lemma~\ref{lem:Ti}, in a way that includes the case $\nu=0$. In other words, we should relate the series $K_i$ and $K_1$ by combinatorial constructions that do not create monochromatic edges. The key is the following bounds, explained by Figure~\ref{fig:KiK1}:
  \[
    [w^{n-2i+2}] K_1   \le  [w^n]K_i \le  [w^{n+2i-4}] K_1.
  \]
  Based on this, the rest of the proof mimics the proof of Lemma~\ref{lem:Ti}.
\end{proof}

  \begin{figure}[htb]
    \centering
    \scalebox{0.78}{\input{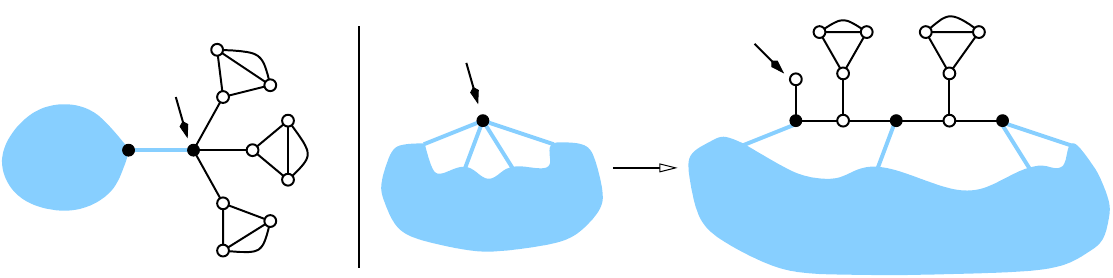_t}}
    \caption{\emm Left., Construction of a map of $\mathcal K_i$ from a map of $\mathcal K_1$ (here, $i=4$).  \emm Right., Construction of a map of $\mathcal K_1$ from a map of $\mathcal K_i$ (with $i=4$ again). The set of colours is $\{a,b,c\}$. No monochromatic edge is created.}     
    \label{fig:KiK1}
  \end{figure}

 \subsection{Negative values of $\boldsymbol{\nu}$}

 It seems to be of interest, in the physics literature, to examine the position and nature of dominant singularities of the $3$-Potts series also when $\nu<0$. Indeed, this has been done on some regular lattices (see e.g.~\cite{
   jacobsen-triangle-potts}). We briefly return here to the case of near-triangulations, and present, without proof, what we predict in this case, based on the minimal polynomial~\eqref{alg:dT1} of $\DT$.  

Here are some of the results that we have rigorously established so far:
 \begin{itemize}
 \item for $\nu \in [0,\nu_c)$, the radius of convergence $\rho_\nu$ of $T_1$ (or more precisely, of $T_1/\nu$) is given by the branch of $\Delta_1$ that equals $1/8$ at $\nu=0$, and the series $T_1/\nu$ has a singularity in $(1-w/\rho_\nu)^{3/2}$ near its radius (Section~\ref{sec:asympt-triang}),
   \item for $\nu \le -2$, the (unique) dominant singularity $s_\nu$ of $T_1$ is given by the negative branch of $\Delta_2$ that tends to $0$ as $\nu$ tends to $-\infty$, say  $\mathcal B_2'$, and the series $T_1$ has a singularity in $(1-w/s_\nu)^{3/2}$ near $s_\nu$ (from Section~\ref{sec:asympt-cubic}).
   \end{itemize}
  \begin{figure}[htb]
  \centering
  \includegraphics[width=60mm]{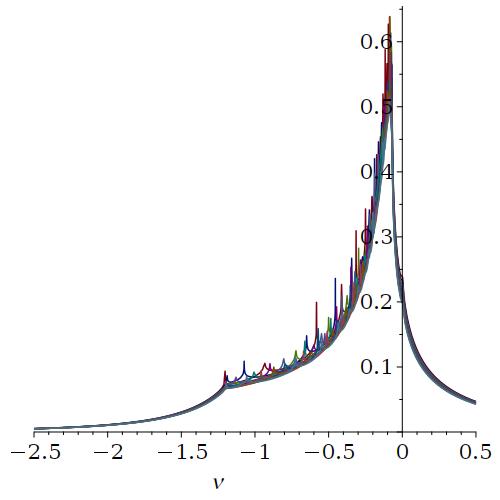}
  \hskip 10mm
    \includegraphics[width=60mm]{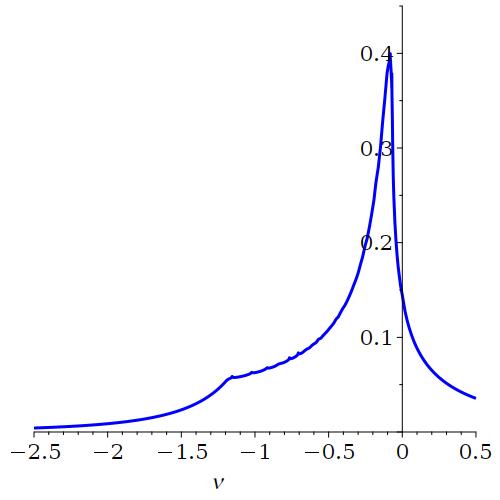}
 \caption{Estimates of the radius of convergence of $T_1$, as a function of $\nu$, by $\liminf \left|[w^n] T_1\right|^{-1/n}$. \emm Left,: for $n\in \llbracket 20, 30\rrbracket$ already, three phases seem to appear between $-2$ and $0$. \emm Right,: for $n=120$.}  
  \label{fig:radius_est}
\end{figure}

By combining exact computations for specific values of $\nu$, completed with estimates of the singularities and exponents obtained via differential approximants~\cite{DiffApprox}, and studies of the branches of $\Delta_0, \Delta_1$ and $\Delta_2$, here is what we predict for values of $\nu$ in $(-2,0)$, starting, say, from the largest of these values; see Figure~\ref{fig:radius_est} for a confirmation, and our \Maple\ session for details.
 
   \begin{itemize}
   \item {\bf Continuity at $\boldsymbol 0$}: the (positive) radius of convergence $\rho_\nu$ remains a dominant singularity, and lies on the branch $\mathcal B_1$ of $\Delta_1$ that goes through $(0,1/8)$ as long as $\nu>\nu_d:=1-12 /\sqrt{127}$ (Figure~\ref{fig:asympt-neg}, left). The singular exponent remains $3/2$.
   \item {\bf A negative critical value of $\boldsymbol \nu$:} at $\nu_d:=1-12 /\sqrt{127}\simeq -0.0648$, the branch $\mathcal B_1$ meets another branch of $\Delta_1$ at the point $(\nu_d,\rho_{\nu_d})$, with
     \[
       \rho_{\nu_d}=\frac 5{4 \cdot 17^3} \left( 3\cdot 5^2\cdot 7+\frac {13\cdot 43\sqrt{127}}{12}\right)\simeq 0.267.
     \]
     For $\nu=\nu_d$,  \emph{the nature of the singularity changes, with an exponent $4/3$} in $T_1$, which happens to be the exponent for the critical Ising model on planar maps (this part is rigorously proved).
   \item{\bf Two dominant singularities.} On the left of the point $(\nu_d,\rho_{\nu_d})$,  the two real branches of $\Delta_1$  become a pair of complex conjugate branches. They correspond to two dominant singularities $s_\nu$ and $\overline{s_\nu}$ of $T_1$, resulting in oscillating coefficients. The exponent remains~$1/2$. This can be checked rigorously for $\nu=-1$ for instance (Figure~\ref{fig:asympt-neg}, middle).
     \item{\bf Reaching the $\boldsymbol{\Delta_2}$-regime.} As $\nu$ approaches $\nu_e\simeq -1.1832$ from above, with $\nu_e$ an algebraic number of degree $435$, the modulus of the complex singularities $s_\nu$ and $\overline{s_\nu}$ becomes as large as the modulus of the point of the branch $\mathcal B_2'$ lying at abscissa $\nu$. Below this value, one dominant singularity of $T_1$ lies on  the negative branch $\mathcal B_2'$ of $\Delta_2$, as is the case for $\nu\le -2$ (Figure~\ref{fig:asympt-neg}, right). We do not know what the exponent is at $\nu_e$.
   \end{itemize}

   \begin{figure}[htb]
  \centering
  \includegraphics[width=44mm]{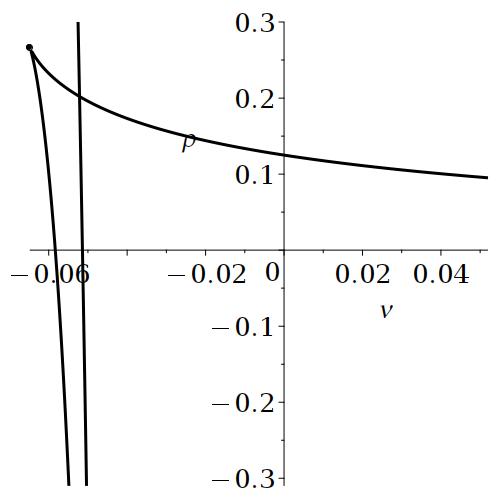} 
  \hskip 3mm \includegraphics[width=50mm]{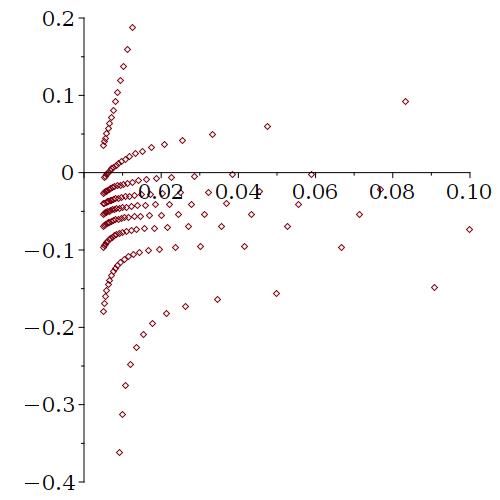}  \hskip 2mm
   \includegraphics[width=44mm]{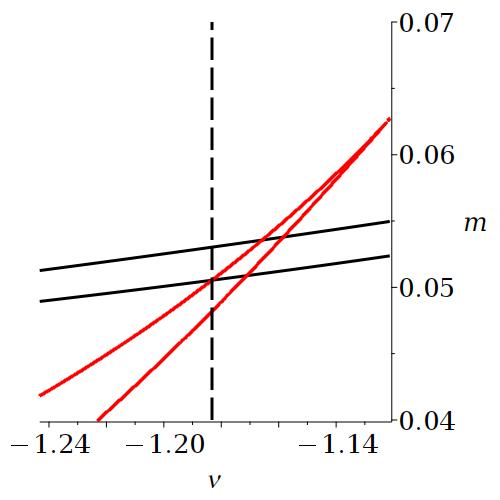} 
  \caption{\emm Left,. Above $\nu_d\simeq-0.0648$, the radius (and dominant singularity) remains on the branch of $\Delta_1$ that goes through $(0, 1/8)$. \emm Middle., For $\nu=-1$, the ratios between the coefficients of $w^n$ and $w^{n+1}$, plotted against $1/n$. \emm Right,. As~$\nu$ approaches $\nu_e\simeq-1.1832$ from above,  the modulus of the complex dominant singularities, which are roots of $\Delta_1$ (bottom black curve), becomes as large as the modulus of the (negative) branch $\mathcal B_2'$ (top red curve).} 
  \label{fig:asympt-neg}   
\end{figure}  
 
\section{Final comments}
 
\paragraph{\bf Exact results: future work.} We have thus determined, more than fifteen year after it was proved to be algebraic, the 3-Potts \gf\  of planar triangulations, or dually of planar cubic maps. Obviously, a more combinatorial proof would be highly desirable. In a forthcoming paper~\cite{BMN-26}, we address  the case of general planar maps. The starting point is a counterpart of the polynomial system of Proposition~\ref{prop:factor}. It is however bigger than for triangulations, and we resort to a different approach to solve it, based on evaluation-interpolation. Also, there are more parameters in this model: the number of edges is the natural size, but one can also record the number of vertices, and this gives rise to a richer singular landscape.

\bigskip
\paragraph{\bf Asymptotic results: some missing tools?} 
The singular analysis of the series $T_1$ (Section~\ref{sec:asympt-triang}) takes almost a third of the paper. The difficulty lies in the fact that $T_1$ is a series in $w$ with coefficients that depend (polynomially) on the parameter $\nu$. It is not too hard to determine the radius of convergence $\rho_\nu$ of $T_1$ (Lemma~\ref{lem:radius}), but the next two steps are more delicate, namely:
\begin{itemize}
\item determining the nature of the singularity of $T_1$ near $\rho_\nu$  (Lemma~\ref{lem:sq-root}),
  \item proving that $T_1$ has no other dominant singularity (Lemma~\ref{lem:unique}).
  \end{itemize}
  We observe that the singular exponent of $T_1$ at $\rho_\nu$ only changes at $\nu_c$, being constant on $(0, +\infty)\setminus\{\nu_c\}$. Could there be a general result that would guarantee, for instance, that $(0, +\infty)$ splits as a finite union of intervals on which the exponent is constant? This would allow one to consider only one value of $\nu$ per interval, which would be much easier. For instance, assume that $F$ is an algebraic series in $\ns[\nu][[w]]$, with minimal polynomial $\Pol(z) \in \qs[\nu,w][z]$.
  Assume that the product of the leading coefficient and the discriminant of $\Pol$ factors over $\qs[\nu,w]$ as
  \[
    D_0(\nu)^{e_0}     D_1(\nu,w)^{e_1} \cdots     D_k(\nu,w)^{e_k} ,
  \]
with the $D_i$'s distinct and irreducible. Consider a value $\nu$ such that $D_0(\nu)\neq 0$, and assume that the radius $\rho_\nu$ lies on a branch of $D_i$: if $\partial_w D_i(\nu, \rho_\nu)\neq 0$ and $D_j(\nu, \rho_\nu)\neq0$, can we say that the critical exponent is constant in a neighbourhood of $\nu$? That is, is the exponent constant except at the finitely many points where ``something special'' happens in the singularity landscape? 
  
Regarding uniqueness of the dominant singularity, we believe to have introduced new useful tools in the first proof of  Lemma~\ref{lem:unique} by forming a number of polynomial conditions. It is worth noting that, in a personal communication, Linxiao Chen suggested an additional, non-obvious polynomial relation, inspired from   an unpublished preprint~\cite[Prop.~26]{Chen-trees}. This remains to be explored. {Moreover, the criterion of Proposition~\ref{lem:compo_unique}, suggested by discussions with Andrew Elvey Price, should also be applicable to other \gfs.}

\bigskip
\noindent {\bf Acknowledgements.} We are grateful to several colleagues for very informative discussions during the preparation of this paper. In particular, we thank Marie Albenque for live explanations on her paper~\cite{albenque-Ising} with Laurent Ménard and Gilles Schaeffer; Alin Bostan, for providing the reference~\cite{Braverman}; Linxiao Chen for e-mail discussions on uniqueness of the dominant singularity;  Andrew Elvey Prive for suggesting a combinatorial approach to the uniqueness of the dominant singularity; Mohab Safey El Din for his repeated help with {\tt msolve}, and Tony Guttmann for his help with differential approximants; and finally Jesper Jacobsen and Andrea Sportiello for fascinating discussions on the physical (and non-physical) aspects of the Potts model.

\appendix

\section{The 1-catalytic equation for $\boldsymbol{T(y)}$}\label{app:1cat}
Here is the equation in one catalytic variable (namely, $y$), satisfied by $T(y)$, derived from Proposition~\ref{prop:eqinv} by expansion near $y=0$. It involves the four series $T_1, T_3, T_5$ and $T_7$, where $T_i=[y^i] T(y)$. Below, we write $\beta:=\nu-1$:
\begin{small}
  \allowdisplaybreaks
 \begin{multline} \label{1cat-complete}
       2916 \nu^{5} y^{12} \boldsymbol{T(y)}^{5}
    +27  \nu^{3} y^{9} \Big(
    \beta \left(37 \nu +17\right) y^{2}-36 \nu  \left(3 \nu +1\right) y +144 \nu^{2}\Big)
    \boldsymbol{T(y)}^{4}\\
    -2\nu y^6
    \Big(486 \boldsymbol{T_1} \,\nu^{4} y^{4}-81 \nu^{3} \left(5 \nu +1\right) w \,y^{4}+486 \nu^{4} w \,y^{3}-\left(56 \nu^{2}+59 \nu +2\right) \beta^{2} y^{4}   \\
    +9 \nu  \beta \left(38 \nu^{2}+40 \nu +3\right) y^{3}-9 \nu^{2} \left(116 \nu^{2}+11 \nu -19\right) y^{2}+486 \nu^{3} \left(3 \nu +1\right) y -972 \nu^{4}
    \Big)
    \boldsymbol{T(y)}^{3}\\
    -\nu y^3 \Big( 18 \nu^{2} \left(25 \nu^{2}-23 \nu -20\right) \boldsymbol{T_1} \,y^{6}-972 \boldsymbol{T_1} \,\nu^{4} y^{5}+972 \boldsymbol{T_1} \,\nu^{4} y^{4}+324 \boldsymbol{T_3} \,\nu^{4} y^{6}+972 \nu^{4} w^{2} y^{6} 
    \\
    -6 \nu  \beta \left(37 \nu^{2}+40 \nu +4\right) w \,y^{6}+54 \nu^{2} \left(17 \nu^{2}+3 \nu -2\right) w \,y^{5}-216 \nu^{3} \left(7 \nu +2\right) w \,y^{4}+972 \nu^{4} w \,y^{3} \\
    -4 \left(\nu +2\right) \beta^{3} y^{6}+2 \left(20 \nu^{2}+53 \nu +5\right) \beta^{2} y^{5}-\beta \left(226 \nu^{3}+276 \nu^{2}-15 \nu -1\right) y^{4}\\
    +6 \nu  \left(106 \nu^{3}+35 \nu^{2}-64 \nu -5\right) y^{3}-3 \nu^{2} \left(353 \nu^{2}+104 \nu -25\right) y^{2}+324 \nu^{3} \left(3 \nu +1\right) y -432 \nu^{4}
    \Big) \boldsymbol{T(y)}^{2}\\
    -2\nu\Big(
    324 \nu^{4} w^{2} y^{6}+108 \boldsymbol{T_3} \,\nu^{4} y^{6}-324 \boldsymbol{T_1} \,\nu^{4} y^{5}+2 \beta^{3} y^{7}+162 \boldsymbol{T_1}^{2} \nu^{4} y^{8}+54 \boldsymbol{T_5} \,\nu^{4} y^{8}-108 \boldsymbol{T_3} \,\nu^{4} y^{7}\\
    -\left(2 \nu^{2}+10 \nu +1\right) \beta^{2} y^{6}-\nu  \left(37 \nu^{3}+22 \nu^{2}-38 \nu -3\right) y^{4}-\nu^{2} \left(79 \nu^{2}+31 \nu -2\right) y^{2}+27 \nu^{3} \left(5 \nu +19\right) \boldsymbol{T_1} w \,y^{8}\\
    +2 \nu  \beta \left(34 \nu^{2}+46 \nu +1\right) w \,y^{7}-\nu  \left(17 \nu -11\right) \left(13 \nu^{2}+13 \nu +1\right) w \,y^{6}+162 \boldsymbol{T_1} \,\nu^{4} y^{4}+162 \nu^{4} w \,y^{3}\\
    +\left(36 \nu^{4}-69 \nu^{3}+9 \nu^{2}+126 \nu +6\right) \boldsymbol{T_1} \,y^{8}-6 \nu^{2} \left(25 \nu^{2}-23 \nu -20\right) \boldsymbol{T_1} \,y^{7}+6 \nu^{2} \left(52 \nu^{2}-23 \nu -20\right) \boldsymbol{T_1} \,y^{6}\\
    +3 \nu^{2} \left(7 \nu^{2}-41 \nu -20\right) \boldsymbol{T_3} \,y^{8}+9 \nu^{2} \left(13 \nu^{2}-32 \nu -17\right) w^{2} y^{8}-27 \nu^{3} \left(11 \nu +1\right) w^{2} y^{7}\\
    -\left(8 \nu^{2}+28 \nu +3\right) \beta^{2} w \,y^{8}+6 \nu^{2} \left(65 \nu^{2}+20 \nu -4\right) w \,y^{5}-9 \nu^{3} \left(41 \nu +13\right) w \,y^{4}\\
    +\beta \left(12 \nu^{3}+26 \nu^{2}-10 \nu -1\right) y^{5}+\nu  \left(68 \nu^{3}+33 \nu^{2}-27 \nu -2\right) y^{3}+18 \nu^{3} \left(3 \nu +1\right) y -18 \nu^{4}
    \Big)\boldsymbol{T(y)}\\
    + \Big(
    972 \boldsymbol{T_1} \,\nu^{5} w^{2} y^{7}-324 \boldsymbol{T_1} \boldsymbol{T_3} \,\nu^{5} y^{7}+108 \boldsymbol{T_1} \,\nu^{5} y^{2}-36 \boldsymbol{T_1} \,\nu^{5} y -36 \nu^{5} w +108 \boldsymbol{T_1}^{2} \nu^{5} y^{6}-36 \boldsymbol{T_7} \,\nu^{5} y^{7}\\
    -108 \boldsymbol{T_1}^{2} \nu^{5} y^{5}+36 \boldsymbol{T_5} \,\nu^{5} y^{6}-36 \boldsymbol{T_5} \,\nu^{5} y^{5}+72 \boldsymbol{T_3} \,\nu^{5} y^{4}-108 \nu^{5} w^{2} y^{3}-36 \boldsymbol{T_3} \,\nu^{5} y^{3}\\
    +2 \nu^{2} \left(73 \nu^{3}+339 \nu^{2}+768 \nu +170\right) \boldsymbol{T_1} w \,y^{7}+18 \nu^{4} \left(5 \nu +19\right) \boldsymbol{T_1} w \,y^{6}-18 \nu^{4} \left(5 \nu +19\right) \boldsymbol{T_1} \,y^{5} w \\
    +4 \nu^{3} \left(2 \nu +1\right) \left(17 \nu -20\right) \boldsymbol{T_1} \,y^{4}-18 \nu^{4} \left(29 \nu +25\right) \boldsymbol{T_3} w \,y^{7}+3 \nu^{3} \left(71 \nu^{2}+92 \nu -55\right) \boldsymbol{T_1}^{2} y^{7}\\
    +\left(-4 \nu^{5}+12 \nu^{4}+34 \nu^{3}+98 \nu^{2}+142 \nu +6\right) \boldsymbol{T_1} \,y^{7}+2 \nu  \left(12 \nu^{4}-23 \nu^{3}+3 \nu^{2}+42 \nu +2\right) \boldsymbol{T_1} \,y^{6}\\
    -2 \nu  \left(37 \nu^{4}-46 \nu^{3}-17 \nu^{2}+42 \nu +2\right) \boldsymbol{T_1} \,y^{5}-2 \nu^{3} \left(79 \nu^{2}-23 \nu -20\right) \boldsymbol{T_1}   \,y^{3}\\
    -2 \nu  \left(23 \nu^{4}+47 \nu^{3}+100 \nu^{2}+62 \nu +2\right) \boldsymbol{T_3} \,y^{7}+2 \nu^{3} \left(7 \nu^{2}-41 \nu -20\right) \boldsymbol{T_3} \,y^{6}-2 \nu^{3} \left(25 \nu^{2}-41 \nu -20\right) \boldsymbol{T_3}  \,y^{5}\\
    +2 \nu^{3} \left(29 \nu^{2}+59 \nu +20\right) \boldsymbol{T_5}  \,y^{7}-54 \nu^{4} \left(13 \nu +17\right) w^{3} y^{7}-\nu  \left(62 \nu^{4}+132 \nu^{3}+387 \nu^{2}+274 \nu +9\right) w^{2} y^{7}\\
    -9 \nu^{3} \left(19 \nu^{2}-20 \nu -11\right) y^{5} w^{2}+18 \nu^{4} \left(11 \nu +1\right) w^{2} y^{4}+4 \nu  \beta^{3} w \,y^{7}-2 \nu^{2} \left(37 \nu^{3}+22 \nu^{2}-38 \nu -3\right) w \,y^{4}\\
    +2 \nu^{2} \left(68 \nu^{3}+33 \nu^{2}-27 \nu -2\right) w \,y^{3}-2 \nu^{3} \left(79 \nu^{2}+31 \nu -2\right) w \,y^{2}
    +36 \nu^{4} \left(3 \nu +1\right) w y \\
    +36 \nu^{3} \left(2 \nu +1\right) \left(\nu -3\right) w^{2} y^{6}-2 \nu  \left(2 \nu^{2}+10 \nu +1\right) \beta^{2} w \,y^{6}+2 \nu  \beta \left(12 \nu^{3}+26 \nu^{2}-10 \nu -1\right) w \,y^{5}
    \Big)=0  .
  \end{multline}
\end{small}

\section{Explicit minimal polynomials of the radii of convergence}

\subsection{Triangulations}\label{sec:app-triang}
Depending on the value of $\nu\ge0$, the minimal polynomial of $\rho_\nu$ (the radius of $T_i(\nu, \cdot)$) is one of the following two polynomials, where we denote $\beta=\nu-1$ (see Proposition~\ref{prop:asympt}):
\begin{small}  \allowdisplaybreaks
  \begin{multline} \label{Delta1}
    \Delta_1(\nu,\rho)=  5135673858195456 \nu^{12} \rho^{5}+3869835264 \nu^{9} \beta  \left(23311 \beta^{2}-94464\right) \rho^{4}\\
    -221184 \nu^{6} \left(13675471 \beta^{6}-31778190 \beta^{4}-2827548 \beta^{2}+5971968\right) \rho^{3}\\
    -6912 \nu^{3} \beta  \left(16987825 \beta^{8}-94263704 \beta^{6}+178105122 \beta^{4}-125057520 \beta^{2}+22523184\right) \rho^{2}\\
    -32 \beta^{2}\left(44998721 \beta^{10}-354609900 \beta^{8}+1100056473 \beta^{6}-1675428138 \beta^{4}+1251637596 \beta^{2}-366996096\right)  \rho\\
    -\beta (\beta-1)^{3}  \left(6509057 \beta^{8}-44137521 \beta^{6}+110675808 \beta^{4}-118226304 \beta^{2}+45349632\right) ,
  \end{multline}
  \begin{multline}\label{Delta2}  \allowdisplaybreaks
    \Delta_2(\nu,\rho)=173133498956120064000 \nu^{23} \rho^{9}-86566749478060032 \nu^{20} \beta  \left(2 \beta^{2}-525\right) \rho^{8}\\
    +2348273369088 \nu^{17} \left(30157 \beta^{4}+60528 \beta^{2}+1674720\right) \beta^{2} \rho^{7}\\
    -764411904 \nu^{14} \beta  \left(554491 \beta^{8}-37429344 \beta^{6}+63722112 \beta^{4}-384466944 \beta^{2}+298598400\right) \rho^{6}\\
    -42467328 \nu^{11} \beta^{2} \left(1797719 \beta^{10}-24965736 \beta^{8}+79207569 \beta^{6}-298516608 \beta^{4}+421311240 \beta^{2}-41990400\right)  \rho^{5}\\
    -884736 \nu^{8}\beta^{3} \left(750262 \beta^{12}-5588808 \beta^{10}+11945370 \beta^{8}-215187504 \beta^{6}+744324669 \beta^{4}-1061891640 \beta^{2} \right.
    \\
    \left. +1062357120\right)  \rho^{4}
    -2048 \nu^{5}\beta^{2} \left(6476656 \beta^{16}-146287584 \beta^{14}+1597400568 \beta^{12}-9587073168 \beta^{10} \right.
    \\
    \left. +29838912921 \beta^{8}-51034914594 \beta^{6}+52101038457 \beta^{4}-21850334496 \beta^{2}-7652750400\right)  \rho^{3}\\
    -768 \nu^{2}\beta^{3}  \left(286992 \beta^{18}-7681792 \beta^{16}+88324784 \beta^{14}-561120928 \beta^{12}+2166482658 \beta^{10}-5422315320 \beta^{8} \right.
    \\
    \left. +9146245860 \beta^{6}-9455624856 \beta^{4}+4707143523 \beta^{2}-637729200\right) \rho^{2}\\
    -96 \nu  \beta^{4}  \left(\beta -1\right)^{2} \left(127552 \beta^{12}-3288320 \beta^{10}+34397596 \beta^{8}-192023736 \beta^{6}+599850702 \beta^{4} \right.
    \\
    \left. -880127532 \beta^{2}+449067645\right)\rho\\
    +\beta^{3} \left(\beta-1 \right)^{4} \left(510208 \beta^{10}-11133824 \beta^{8}+95834752 \beta^{6}-383558976 \beta^{4}+649066608 \beta^{2}-358722675\right) .
  \end{multline}
\end{small}

\subsection{Cubic maps}\label{sec:app-cubic}
Depending on the value of $\nu\ge0$, the minimal polynomial of $\tilde\rho_\nu$ (the radius of $K_1(\nu, \cdot)$) is one of the following two polynomials, where we write $\beta=\nu-1$ (see Proposition~\ref{prop:asympt-cubic}):
\begin{small}
  \allowdisplaybreaks
  \begin{multline} \label{DeltaC1}
    \tilde \Delta_1(\nu,\rho)= 28991029248 \left(3+\beta \right)^{12} \beta^{15} \rho^{5}-1769472 \left(10496 \beta^{2}-23311\right) \left(3+\beta \right)^{9} \beta^{12} \rho^{4}\\
    -8192 \left(8192 \beta^{6}-34908 \beta^{4}-3530910 \beta^{2}+13675471\right) \left(3+\beta \right)^{6} \beta^{9} \rho^{3}\\
    -2304 \left(30896 \beta^{8}-1543920 \beta^{6}+19789458 \beta^{4}-94263704 \beta^{2}+152890425\right) \left(3+\beta \right)^{3} \beta^{6} \rho^{2}\\
    +864 \left(55936 \beta^{10}-1716924 \beta^{8}+20684298 \beta^{6}-122228497 \beta^{4}+354609900 \beta^{2}-404988489\right) \beta^{3} \rho \\
    +27 \left(6912 \beta^{8}-162176 \beta^{6}+1366368 \beta^{4}-4904169 \beta^{2}+6509057\right) \left(-3+\beta \right)^{3},
  \end{multline}

  \begin{multline}\label{DeltaC2}  
    \tilde \Delta_2(\nu,\rho)=8796093022208000 \beta^{21} \left(3+\beta \right)^{23} \rho^{9}+118747255799808 \left(175 \beta^{2}-6\right) \beta^{18} \left(3+\beta \right)^{20} \rho^{8}\\
    +86973087744 \left(186080 \beta^{4}+60528 \beta^{2}+271413\right) \beta^{15} \left(3+\beta \right)^{17} \rho^{7}\\
    -84934656 \left(11059200 \beta^{8}-128155648 \beta^{6}+191166336 \beta^{4}-1010592288 \beta^{2}+134741313\right)\times\\
    \beta^{12} \left(3+\beta \right)^{14} \rho^{6}
    +382205952 \left(172800 \beta^{10}-15604120 \beta^{8}+99505536 \beta^{6}-237622707 \beta^{4}+674074872 \beta^{2}\right.
    \\
    \left. -436845717\right) \beta^{9} \left(3+\beta \right)^{11} \rho^{5}-71663616 \left(4371840 \beta^{12}-39329320 \beta^{10}+248108223 \beta^{8}-645562512 \beta^{6}\right.
    \\
    \left.+322524990 \beta^{4}-1358080344 \beta^{2}  +1640822994\right) \beta^{6} \left(3+\beta \right)^{8} \rho^{4}+4478976 \left(1166400 \beta^{16}+29973024 \beta^{14}\right.
    \\
    \left.-643222697 \beta^{12}+5670546066 \beta^{10}-29838912921 \beta^{8}+86283658512 \beta^{6}-129389446008 \beta^{4}\right.
    \\
    \left.+106643648736 \beta^{2}-42493340016\right) \beta^{3} \left(3+\beta \right)^{5} \rho^{3}+5038848 \left(291600 \beta^{18}-19370961 \beta^{16}+350208328 \beta^{14}\right.
    \\
    \left.-3048748620 \beta^{12}+16266945960 \beta^{10}-58495031766 \beta^{8}+136352385504 \beta^{6}-193166302608 \beta^{4}\right.
    \\
    \left.+151200711936 \beta^{2}-50839771824\right) \left(3+\beta \right)^{2} \rho^{2}-1889568 \beta  \left(3+\beta \right) \left(616005 \beta^{12}-10865772 \beta^{10}\right.
    \\
    \left.+66650078 \beta^{8}-192023736 \beta^{6}+309578364 \beta^{4}-266353920 \beta^{2}+92985408\right) \left(-3+\beta \right)^{2} \rho \\-19683 \left(492075 \beta^{10}-8013168 \beta^{8}+42617664 \beta^{6}-95834752 \beta^{4}+100204416 \beta^{2}-41326848\right) \left(-3+\beta \right)^{4}
    .
  \end{multline}
\end{small}

\section{Proof of Proposition~\ref{lem:compo_unique}}
\label{app:unique-compo}

\begin{proof}
  First, observe that the conditions $g_{0,0}=0$ and $g_{0,1}<1$ imply that $U(z)=\sum _{n\ge 1} u_n z^n$ is completely characterized by~\eqref{eq:UG},
  once $G$ is fixed: the coefficient  $u_n$ is determined uniquely by induction on $n\ge 1$ (recall that $u_0=0$ by assumption). Upon replacing $U$ by $(1-g_{0,1})U$, we could even assume that $g_{0,1}=0$. Note also that $U$ has non-negative coefficients since $G$ has non-negative coefficients and  $g_{0,1}<1$.
  
  Let us  prove that the radius of convergence $\rho$ of $U$ is finite. Let $m\ge 0, n\ge 2$ such that $g_{m,n}>0$, and let $k\ge 0$ such that $g_{k,0} >0$: such a $k$ exists, otherwise $U(z)$ would be uniformly~$0$, and in particular periodic. Let $V\equiv V(z)$ be the unique power series satisfying
  \[
    V=
    g_{k,0} z^k + g_{m,n} z^m V^n.
  \]
  Then $V$ is an algebraic series, which has a finite radius of convergence. Moreover, its coefficients bound from below the coefficients of $U$. Hence $U$ itself has a finite radius of convergence $\rho$.  By Pringsheim's theorem, $\rho$ is a singularity of $U$.

  Let us prove that $\rho>0$. Recall that $G(z,u)$ is analytic around $(0,0)$, that $G'_u(0,0)=g_{0,1}<1$ and $U(0)=0$. Then the  analytic version of the implicit function theorem implies that $U$ is analytic around $0$.

  Let us now prove that $U(z)$ converges at $z=\rho$. We first note that $U$ is an increasing continuous function of $z$ on $[0, \rho)$, with $U(0)=0$. Moreover, there exist $m\ge 0$ and $n\ge 2$ such that on the interval $(0, \rho)$, we have  $U(z) \ge  g_{m,n} z^m U(z)^n$, where $ g_{m,n}>0$. In particular, for $z>\rho/2$, we have $1\ge g_{m,n} (\rho/2)^m U(z)^{n-1}$, hence $U(z)$ remains bounded on $[0, \rho)$, and thus converges at $\rho$ (by non-negativity of its coefficients).

  \smallskip
  We now consider the curve of $\rs_{\ge0}^2$ consisting of the points $(z, U(z))$  for $0\le z \le \rho$. It starts at $(0,0)$ in $\mathcal R$, and then increases continuously to the point $(\rho, U(\rho))$. We study three cases, depending on the relative positions of this point and $\mathcal R$. Note that if $(z,u)\in \mathcal R$, then $[0,z] \times [0,u] \subset \mathcal R$. Also, $G(z,u)$ is analytic at any point of $\mathcal R^\circ$. Moreover, if $z_0>0$ and $(z_0, \vareps)\in \mathcal R^\circ$ for some $\vareps>0$, then $G(z,u)$ is analytic at $(z_0,u)$ as well.

  {\bf Case 1:} if $(\rho, U(\rho)) \in \mathcal R^\circ$, then it must be that $G'_u(\rho, U(\rho))=1$, otherwise the analytic version of the implicit function theorem would provide an analytic continuation of $U$ at $\rho$.  By Theorem VII.3 of~\cite{flajolet-sedgewick}, the series $U$ has a square root singularity at $\rho$, and $\rho$ is the unique dominant singularity.

  {\bf Case 2:} if $(\rho, U(\rho)) \in \overline{\mathcal R}\setminus \mathcal R^\circ$, then it must be that $G'_u(z, U(z))<1$ for $z\in [0, \rho)$. Indeed, we have
  $G'_u(0, U(0))=G'_u(0,0)=g_{0,1}<1$, and if the above inequality did not persist on $[0, \rho)$,  Theorem~VII.3 of~\cite{flajolet-sedgewick} would imply that $U$ has radius of convergence less than $\rho$. By continuity, and non-negativity of the coefficients of $G$, we have $G'_u(\rho, U(\rho))\le 1$.

  Now let $s\neq \rho$ have modulus $\rho$. We will prove that $U$ has an analytic continuation at $s$. The aperiodicity assumption implies that $|U(s)|< U(\rho)$. By assumption on the boundary of $\mathcal R$, this means that $(|s|, |U(s)|)=(\rho, |U(s)|)$ lies in $\mathcal R^\circ \cup \{(\rho,0)\}$. Hence $G(z,u)$ is analytic in a neighbourhood of $(|s|, |U(s)|)$, and hence, by non-negativity, in a neighbourhood of $(s, U(s))$.  Moreover,
  $|G'_u(s, U(s)) | \le G'_u(\rho, |U(s)|) < |G'_u(\rho, U(\rho)|\le 1$ (the strict inequality follows from the fact that $|U(s)|<U(\rho)$ and that $G'_u(z,u)$ actually depends on $u$). Hence, by the implicit function theorem, $U$ has an analytic continuation at the point $s$, which cannot be a singularity.

  {\bf Case 3:}  finally, it is impossible to have  $(\rho, U(\rho)) \not \in \overline{\mathcal R}$, or even  $(\rho, U(\rho)) \not \in {\mathcal R}$, because  the series~$G$ \emm does, converge at $(\rho, U(\rho))$. Indeed, since $U$ converges at $\rho$ and all coefficients are non-negative, we have
  \begin{align*}
    U(\rho)= \sum_{n\ge 1 } u_n \rho^n
    &=
    \sum_{n\ge 1} \rho^n \left(\sum_{i,j \ge 0} g_{i,j} [z^{n-i}] U(z)^j \right) \quad \quad \text{by } \eqref{eq:UG}
    \\
    &=\sum_{i,j \ge 0} g_{i,j}  \rho^i\left( \sum_{n\ge i} \rho^{n-i}  [z^{n-i}] U(z)^j\right)
    \\
    &=\sum_{i,j \ge 0} g_{i,j}  \rho^i U(\rho)^j \qquad \text{since $U$ converges at } \rho.
  \end{align*}
  This concludes the proof of the proposition.
\end{proof}

\bibliographystyle{abbrv} 
\bibliography{three-coloured.bib}

\end{document}